\newtheorem{thm}{Theorem}[section]
\newtheorem{defi}[thm]{Definition}
\newtheorem{prop}[thm]{Proposition}
\newtheorem{cor}[thm]{Corollary}
\newtheorem{lem}[thm]{Lemma}
\newtheorem{rem}[thm]{Remark}
\renewcommand{\bf}[1]{\mathbf{#1}}
\renewcommand{\rm}[1]{\mathrm{#1}}
\renewcommand{\cal}[1]{\mathcal{#1}}
\newcommand{\bb}[1]{\mathbb{#1}}
\newcommand{\dd}{\mathrm{d}}
\newcommand{\ov}{\overline}
\newcommand{\dis}{\operatorname{dist}}
\def\R{\bb R}
\def\C{\bb C}
\def\P{\bb P}
\def\B{\mathbbm 1}
\def\H{\bb H}
\newcommand{\wa}{\rightsquigarrow}
\newcommand{\qar}{\rightsquigarrow }
\newcommand{\lar}{\leftrightarrow}
\newcommand{\CG}{C_{\Gamma}}
\newcommand{\lt}{\mathcal{L}}
\newcommand{\ze}{\zeta_{1,\bf A}(\bf b_1)}
\newcommand{\zee}{\zeta_{k,\bf A}(\bf b_k)}
\renewcommand{\Re}{\mathrm{Re}}
\renewcommand{\Im}{\mathrm{Im}}
\def\slc{\rm{SL}_2(\bb C)}
\def\sp{\rm{Span}}
\begin{document}
\title{Kleinian Schottky groups, Patterson-Sullivan measures, and Fourier decay\\}
\author{Jialun Li, Fr\'ed\'eric Naud and Wenyu Pan (with appendix by Jialun Li)\footnote{FN is supported by Institut Universitaire de France.}}
\date{}

\maketitle


\begin{abstract} Let $\Gamma$ be a Zariski dense Kleinian Schottky subgroup of $\mathrm{PSL}_2(\C)$. Let $\Lambda_{\Gamma}\subset \C$
be its limit set, endowed with a Patterson-Sullivan measure $\mu$ supported on $\Lambda_{\Gamma}$. We show that the Fourier transform $\widehat{\mu}(\xi)$
enjoys polynomial decay as $\vert \xi \vert$ goes to infinity. This is a $\mathrm{PSL}_2(\C)$ version of the result of Bourgain-Dyatlov \cite{BD1}, and uses the decay of exponential sums based on Bourgain-Gamburd sum-product estimate on $\C$. These bounds on exponential sums require a delicate non-concentration hypothesis which is proved using some representation theory and regularity estimates for stationary measures of certain random walks on linear groups.
\end{abstract}

\bibliographystyle{plain}

\section{Introduction and main result}
\subsection{Fourier dimension}
Let $\mu$ be a Borel probability measure on $\R^d$,  then its Fourier transform $\widehat{\mu}(\xi)$ is defined for any $\xi \in \R^d$ by
$$ \widehat{\mu}(\xi):=\int_{\R^d}e^{-i\langle \xi,x\rangle}d\mu(x).$$
Here $\langle\bullet, \bullet\rangle$ is the usual scalar product on $\R^d$ and $\vert \bullet \vert$ is the associated euclidean norm.
Let $K$ be a non empty compact subset of $\R^d$, then following Frostman \cite{Frost1} its Hausdorff dimension can be expressed as 
$$\mathrm{dim}_H(K)=\sup\left\{ s\in [0,d]\ :\ \int_{\R^d} \vert \widehat{\mu}(\xi)\vert^2\vert \xi\vert^{s-d}d\xi<\infty\ \mathrm{for\ some}\ \mu \in \mathcal{P}(K) \right\}, $$
where $\mathcal{P}(K)$ is the set of Borel probability measures on $K$. On the other hand, the {\it Fourier dimension} is defined by
$$\mathrm{dim}_F(K)=\sup\left\{ s\in [0,d]\ :\ \sup_{\xi}\vert \widehat{\mu}(\xi)\vert^2\vert \xi\vert^{s}<\infty\ \mathrm{for\ some}\ \mu \in \mathcal{P}(K) \right\}.$$ 
We therefore have $\mathrm{dim}_F(K)\leq \mathrm{dim}_H(K)$, and sets for which equality occur are called {\it Salem sets}. Constructing Salem sets
with genuine fractal dimension is a difficult problem, and all the known constructions either rely on the use of a random process \cite{Kahane1, Bluhm1} or specific number theoretic properties \cite{Hambrook1, Kaufman1}. A related problem and still widely open, is to build deterministic sets with {\it positive Fourier dimension}, i.e. compact sets $K$ with fractal Hausdorff dimension
for which one can find a Borel probability measure $\mu$ on $K$ whose Fourier transform has {\it polynomial decay}:
$$\widehat{\mu}(\xi)=O(\vert \xi \vert^{-\epsilon}),$$
for some $\epsilon>0$. This is of course not always possible: in dimension $1$, the celebrated example of the triadic Cantor set is known to have zero Fourier dimension, by the work of Kahane and Salem \cite{KS1}. In higher dimension, any fractal set $K$ which is contained in an {\it affine subspace} will obviously not enjoy Fourier decay as $\vert \xi \vert\rightarrow \infty$.
The problem of Fourier decay of fractal measures is not only interesting for itself but also for its relationship with optics and diffraction through the Huygens-Fresnel principle: 
see for example \cite{AC1, Sakurada1} in the physics literature. In the mathematics literature, in addition to the above mentioned works, Fourier decay is deeply connected to the problem of restriction estimates in harmonic analysis, we just mention \cite{Laba} and references herein. For a comprehensive introduction to fractal sets and the calculation of Hausdorff dimension, we refer to the classic textbook of Falconer \cite{Falconer1}. For an in depth study of the relationships between Hausdorff dimension and Fourier transform, we recommend the book of Mattila \cite{Mattila1}. 

\subsection{Main result}
A recent result of Bourgain-Dyatlov \cite{BD1} shows (in dimension 1) that all limit sets of non-elementary convex co-compact Fuchsian groups have positive Fourier decay, which is an explicit family of examples. Recent works of Sahlsten et al \cite{Sal1,Sal2} and Li \cite{Li1} also prove Fourier decay in deterministic situations (Cantor sets
related to the Gauss map and stationary measures on $\mathrm{SL}_2(\R)$).
Before we state our main theorem,
we need to recall some notations. From now on we take $d=2$ and we identify $\R^2\simeq \C$.  Let $D_1,\ldots,D_r,\ldots ,D_{2r}$ be $2r$ bounded open {\it topological discs}\footnote{In general $\partial D_j$ is just H\"older regular and $D_j$ is not convex.} in $\C$ ,with $r\geq 2$, whose closures are pairwise disjoint:
$$\forall i\neq j,\  \overline{D_j}\cap \overline{D_i}=\emptyset.$$
Assume that we are given $\gamma_1,\ldots,\gamma_r$ in $\mathrm{PSL}_2(\C)$ such that 
$$\forall i=1,\ldots,r,\ \gamma_i(\widehat{\C}\setminus \overline{D_{i+r}})=D_i.$$
Then the free group $\Gamma:=\langle \gamma_1,\ldots,\gamma_r,\gamma_1^{-1},\ldots,\gamma_r^{-1}\rangle$ is called a {\it Schottky group}. If in addition the discs $D_i$
are genuine euclidean discs, then $\Gamma$ is called {\it classical}. The limit set $\Lambda_{\Gamma}$ is the complementary set of the discontinuity set 
$\Omega_\Gamma \subset \widehat{\C}$ for the action of $\Gamma$ on $\widehat{\C}$. When, using Poincar\'e extension, $\Gamma$ is viewed as a set of isometries of the
hyperbolic $3$-space $\H^3$, then $\Lambda_{\Gamma} \subset \partial \H^3$ coincides with the limit set of $\Gamma$ for its action on $\H^3$. The Hausdorff dimension of the limit set
coincides with the critical exponent of Poincar\'e series, and is denoted throughout the paper by $\delta:=\delta_\Gamma$. We point out that there is a universal upper bound strictly smaller than $2$ on the dimension $\delta$ for {\it classical} Schottky groups due to Doyle \cite{Doyle}. On the other hand, non-classical Schottky groups are rather ubiquitous, and free subgroups of co-compact subgroups of $\rm{PSL}_2(\C)$, see L. Bowen \cite{LBowen}, provide examples of non-classical Schottky groups with $\delta$ arbitrarily close to $2$. We recall that limit sets of convex co-compact manifolds are naturally equiped with a measure called {\it Patterson-Sullivan measure}, which equals the $\delta$-Hausdorff measure (with respect to the spherical metric) on $\Lambda_{\Gamma}$ in our setting (see $\S 2$ for more details).
Our main result is as follows.
\begin{thm}
\label{main1}
 Assume that $\Gamma$ is a Zariski dense Schottky group in $\mathrm{PSL}_2(\C)$, and let $\mu$ be a Patterson-Sullivan measure on $\Lambda_{\Gamma}$. Fix any neighborhood $\mathcal{U}$ of $\Lambda_{\Gamma}$.
 Let $g$ be in $C^1(\mathcal{U},\C)$ and $\varphi$ be in $C^2(\mathcal{U},\R)$ with
 $$M:=\inf_{z\in\mathcal{U} }\vert \nabla_z\varphi \vert>0$$
 on $\Lambda_\Gamma$. Assume that $\Vert g \Vert_{C^1}+\Vert \varphi \Vert_{C^2}\leq M'$. Then there exist $C:=C(M,M',\Gamma)>0$ and $\epsilon>0$, with $\epsilon$ depending only on $\mu$, such that
 for all $t\in \R$ with $\vert t\vert\geq 1$, 
 \begin{equation}
 \label{main equation}
 \ \left \vert \int_{\Lambda_\Gamma} e^{it\varphi(z)}g(z)d\mu(z) \right \vert \leq C\vert t \vert^{-\epsilon}.
 \end{equation}
 Moreover, there exists $\alpha>0$ such that if  we have $M\geq \vert t \vert^{-\alpha}$ for all $\vert t\vert$ large, the same conclusion holds.
 \end{thm}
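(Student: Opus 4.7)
The plan is to follow the Bourgain--Dyatlov strategy from \cite{BD1}, replacing the Fuchsian $\mathrm{PSL}_2(\R)$ framework by $\mathrm{PSL}_2(\C)$ throughout, which forces the phase variable to live in $\C\simeq\R^2$ rather than $\R$. The starting ingredient is the symbolic coding of $\Lambda_{\Gamma}$: for each admissible word $\mathbf{a}=(a_1,\dots,a_n)$ in the Schottky alphabet, set $\gamma_{\mathbf{a}}=\gamma_{a_1}\cdots\gamma_{a_n}$ and $D_{\mathbf{a}}=\gamma_{\mathbf{a}}(\widehat{\C}\setminus\overline{D_{a_n+r}})$, giving a disjoint partition $\Lambda_\Gamma=\bigsqcup_{|\mathbf{a}|=n}(\Lambda_\Gamma\cap D_{\mathbf{a}})$. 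By the standard transformation rule for Patterson--Sullivan measures, the restriction $\mu|_{D_{\mathbf{a}}}$ is (up to normalization) the push-forward under $\gamma_{\mathbf{a}}$ of $|\gamma_{\mathbf{a}}'|^{\delta}\,d\mu$.

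With this in hand, first I would fix a depth $n(t)$ so that the cylinders $D_{\mathbf{a}}$ have diameter roughly $|t|^{-1/2}$, say, so that Taylor expanding $\varphi\circ\gamma_{\mathbf{a}}$ to first order on each cylinder produces a controllable remainder when multiplied by $t$. The integral in \eqref{main equation} then becomes, schematically,
\begin{equation*}
\sum_{|\mathbf{a}|=n} |\gamma_{\mathbf{a}}'(z_{\mathbf{a}})|^{\delta}\, g(\gamma_{\mathbf{a}}z_{\mathbf{a}})\, e^{it\varphi(\gamma_{\mathbf{a}}z_{\mathbf{a}})} \int_{\Lambda_\Gamma} e^{i\,\langle \xi_{\mathbf{a}},\,w\rangle}\,d\mu(w) + \text{error},
\end{equation*}
where the complex frequency is $\xi_{\mathbf{a}}=t\, D\gamma_{\mathbf{a}}(z_{\mathbf{a}})^{\top}\nabla\varphi(\gamma_{\mathbf{a}}z_{\mathbf{a}})$. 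Iterating this self-similarity relation $k$ times (the multiscale or ``tree'' decomposition), the problem is reduced to bounding an exponential sum $\sum_{x} a_x\, e^{i\,\mathrm{Re}(\xi\cdot x)}$, where $x$ ranges over products of the form $\prod_{j} \gamma_{\mathbf{a}^{(j)}}'(z)$ (times a fixed phase gradient), $a_x\geq 0$ are comparable to $|t|^{-\delta}$-scale weights coming from $|\gamma'|^{\delta}$, and $|\xi|\asymp |t|$ after suitable rescaling.

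The core step is then to invoke the Bourgain--Gamburd-type discretized decay estimate for exponential sums on $\C$ mentioned in the abstract: such a sum is polynomially small in $|\xi|$ provided the distribution $\nu=\sum_x a_x\delta_x$ satisfies a two-dimensional non-concentration hypothesis, namely, no thin tube around a proper real-affine subspace of $\C$ carries too much $\nu$-mass at every relevant scale. I expect this to be the hard part, and the reason for the Zariski density assumption: in contrast with the real Fuchsian case, concentration can now occur along a real line through $0$ in $\C$, and ruling this out requires quantitative control of how the products of derivatives $\prod_j \gamma_{\mathbf{a}^{(j)}}'$ spread out in $\C^{\times}\subset\mathrm{GL}_1(\C)$. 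This is exactly where representation theory of $\mathrm{PSL}_2(\C)$ and Hölder regularity for the stationary measure of the induced random walk on $\mathbb{P}^1(\C)$ (or on a suitable $\mathrm{PSL}_2(\C)$-homogeneous space) enter: Zariski density of $\Gamma$ precludes invariant real lines and, together with positivity/simplicity of the Lyapunov spectrum, yields the required Hölder-type non-concentration at every scale down to $|t|^{-O(1)}$.

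Finally, once the polynomial decay of \eqref{main equation} is established with implicit constants depending polynomially on $M$, the last claim of the theorem follows by running the same argument but with the depth $n(t)$ chosen so that the linearization of $\varphi$ remains valid uniformly when $M$ is allowed to shrink as $|t|^{-\alpha}$: one only needs to pick $\alpha>0$ small enough that the linearization error on a cylinder of diameter $\sim |t|^{-1/2}$ is still absorbed by the extra gain obtained from the sum-product step. The key obstacle, once more, is the verification of the non-concentration hypothesis uniformly in $t$, and I would expect that this regularity statement for stationary measures on $\mathrm{PSL}_2(\C)$-spaces is the main technical theorem underlying the whole proof.
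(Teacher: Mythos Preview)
Your outline matches the paper's strategy: iterate the transfer operator, reduce to an exponential sum over products of rescaled derivatives, and apply a $\C$-valued sum-product estimate under a non-concentration hypothesis verified via representation theory and regularity of stationary measures. A few points where the paper's execution differs from your sketch and which you should not underestimate.

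First, the partition is not by word length but by Patterson--Sullivan mass, $Z(\tau)=\{\bf a:\mu(D_{\bf a})\le\tau^\delta<\mu(D_{\bf a'})\}$; this is what forces the rescaled derivatives $\zeta_{j,\bf A}(\bf b)=\tau^{-2}\gamma'_{\bf a'_{j-1}\bf b'}(x_{\bf a_j})$ to lie in a fixed annulus, which the sum-product input (Proposition~\ref{thm:sum product}) requires. Second, the reduction is not a single linearisation on each cylinder followed by recursion; one applies $\mathcal{L}_{Z(\tau)}^{2k+1}$ in one shot, squares via Cauchy--Schwarz, and only then linearises the \emph{difference} $\varphi(\gamma_{\bf A*\bf B}x)-\varphi(\gamma_{\bf A*\bf B}y)$, which produces the clean product $\eta\,\zeta_{1,\bf A}(\bf b_1)\cdots\zeta_{k,\bf A}(\bf b_k)$ in the phase (Lemmas~\ref{lem:os1}--\ref{lem:os 2}).

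Third, and most substantively, the non-concentration must hold uniformly over \emph{all} real affine lines in $\C$, and you have not indicated how to obtain this uniformity. The paper's device is a determinant trick (Lemma~\ref{linear algebra}, Proposition~\ref{lem:WNC}): if three derivatives lie near some line, the area of the triangle they span---a real polynomial in $z,\bar z$ that no longer depends on the line---is small. Smallness of such a polynomial is then interpreted as a linear incidence condition in a higher real proximal representation $V_n$ of $\mathrm{SL}_2(\C)$ (not on $\mathbb{P}^1(\C)$ itself), where Guivarc'h's H\"older regularity (Lemma~\ref{bq-reg}) applies to the pushforward $(e_n)_*\mu$. For that last step one also needs that $\mu$ arises as a stationary measure for a random walk on $\Gamma$ with finite exponential moment; this is not automatic and is established separately in the appendix.
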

 
 \bigskip
 \noindent
 \begin{rem}
 \label{rema}
 \begin{enumerate}
 \item In the case of $\mathrm{PSL}_2(\mathbb{R})$, Theorem \ref{main1} is obtained by Bourgain-Dyatlov \cite[Theorem 1.2]{BD1} and they show that the decay rate $\epsilon$ depends only on the Hausdorff dimension $\delta_{\Gamma}$. In our setting, 
 	the decay rate $\epsilon$ depends on $\delta_\Gamma$ and the regularity constant $\kappa_8$ given in Lemma \ref{bq-reg}. It is natural to expect that in higher dimensions extra quantities will appear in the characterization of the decay rate. This is because there is no uniform decay rate for any fixed $\delta_\Gamma<1$. Indeed, a Zariski dense Schottky group with $\delta_\Gamma<1$ can be arbitrarily close to a subgroup contained in $\rm{PSL}_2(\R)$, which has no such Fourier decay (see Corollary \ref{cor}). It would be interesting to find a geometric interpretation of the regularity constant $\kappa_8$.
 	
 \item The decay rate $\epsilon$ does not change if we pass to finite index subgroups, because the Patterson-Sullivan measure remains the same when passing to finite index subgroups, a result due to Roblin \cite[Lemma 2.1.4. Theorem 2.2.2]{roblin2005}.
 
 \item Let $\Gamma$ be as in Theorem \ref{main1}. We consider the Selberg zeta function $Z_{\Gamma}(s)$ for the quotient $\Gamma\backslash \mathbb{H}^3$. Using the method originating in the work of Dolgopyat \cite{Dol}, Stoyanov showed that $Z_{\Gamma}$ has finitely many zeros in $\{\operatorname{Re} s\geq \delta_{\Gamma}-\epsilon_{\Gamma}\}$ for some $\epsilon_{\Gamma}>0$ depending on $\Gamma$ \cite{Sto1}. Now with Theorem \ref{main1} available, following the exact same arguments as in \cite{BD1}, we can obtain an $\epsilon_0>0$ depending only on the Fourier decay rate $\epsilon$ given in (\ref{main equation}) such that $Z_{\Gamma}(s)$ has only finitely many zeros in $\{\Re(s)>\delta-\epsilon_0 \}$. In particular, this yields a uniform "essential" spectral gap for any finite cover of $\Gamma\backslash \mathbb{H}^3$ by the above remark. We point out that under the assumption that $\delta_{\Gamma}$ is close enough to $1$, this was already obtained by Dyatlov and Zahl in \cite{DZ}.
Moreover, in dimension 2, Dyatlov and Jin \cite{DJ} also have (unconditionally) an explicit estimate for the spectral gap of convex co-compact surfaces which depends only on the Patterson-Sullivan measure.
 
 \item In \cite{li2018thesis}, the first author established Fourier decay for split semi-simple groups. To deal with the non-split group $\rm{PSL}_2(\C)$, we will borrow ideas from \cite{li2018thesis}, while following a new scheme. We would like to point out that it seems possible to combine the methods in this paper with the ones in \cite{li2018thesis} to obtain a Fourier decay for Furstenberg measures on $\hat{\C}$ but we do not pursue this generalization here. 
 \end{enumerate}
  \end{rem}

\subsection{$C^2$-stable positive Fourier dimension} 
Theorem \ref{main1} motivates the following definition.
 \begin{defi}
 A compact set $K\subset \C$ is said to have  \underline{$C^2$-stable positive Fourier dimension} if and only if for all 
 $C^2$-diffeomorphism $\phi:\mathcal{U}\rightarrow \phi(\mathcal{U})\subset \C$, defined on a neighborhood $\mathcal U$ of $K$, $\phi(K)$ has positive Fourier dimension.
 \end{defi}
Theorem \ref{main1} implies the following characterization of "stable Fourier decay" for limit sets of Schottky groups.
\begin{cor}\label{cor}
Let $\Gamma$ be a Schottky group as above, then $\Lambda_{\Gamma}$ has $C^2$-stable positive Fourier dimension if and only if
$\Gamma$ is Zariski dense in $\rm{PSL}_2(\C)$.
\end{cor}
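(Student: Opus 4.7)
The plan is to prove the equivalence by treating the two implications separately: the forward direction ($\Gamma$ Zariski dense implies $\Lambda_\Gamma$ has $C^2$-stable positive Fourier dimension) follows directly from Theorem \ref{main1}, while the reverse direction reduces to the classical fact that non-Zariski-dense non-elementary subgroups of $\mathrm{PSL}_2(\C)$ preserve a generalized circle in $\widehat{\C}$.

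For the forward direction, fix a $C^2$-diffeomorphism $\phi:\mathcal{U}\to\phi(\mathcal{U})$ defined on a neighborhood of $\Lambda_\Gamma$, and consider the pushforward $\phi_*\mu$, which is a Borel probability measure supported on $\phi(\Lambda_\Gamma)$. Writing $\xi=tu$ with $t=|\xi|\geq 1$ and $u\in\C$ a unit vector, and setting $\varphi_u(z):=\langle u,\phi(z)\rangle$, we have
\[
\widehat{\phi_*\mu}(\xi)=\int_{\Lambda_\Gamma}e^{-it\varphi_u(z)}\,d\mu(z).
\]
Since $\phi$ is a $C^2$-diffeomorphism and $\Lambda_\Gamma$ is compact, $\|(D\phi(z))^{-1}\|$ is uniformly bounded on a neighborhood of $\Lambda_\Gamma$, so $|\nabla_z\varphi_u|=|(D\phi(z))^T u|\geq M>0$ with $M$ independent of $u$; likewise $\|\varphi_u\|_{C^2}$ is uniformly controlled by $\|\phi\|_{C^2}$. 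Applying Theorem \ref{main1} with $g\equiv 1$ then yields $|\widehat{\phi_*\mu}(\xi)|\leq C|\xi|^{-\epsilon}$ with $C,\epsilon$ independent of $\xi$, so $\phi(\Lambda_\Gamma)$ has positive Fourier dimension.

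For the reverse direction, assume $\Gamma$ is not Zariski dense. Since $r\geq 2$, $\Gamma$ contains a non-abelian free subgroup and is therefore non-elementary. By the classification of real algebraic subgroups of $\mathrm{PSL}_2(\C)$ of dimension at most three, ruling out the solvable case (no non-abelian free subgroup) and the compact case (no loxodromic elements), the Zariski closure of $\Gamma$ must be conjugate to $\mathrm{PSL}_2(\R)$. In particular $\Gamma$ preserves a generalized circle $C\subset\widehat{\C}$, and $\Lambda_\Gamma\subset C$. Choose a M\"obius transformation $\phi_0$ sending $C$ to the extended real line; after shrinking $\mathcal{U}$ to avoid $\phi_0^{-1}(\infty)$, $\phi_0$ restricts to a $C^\infty$-diffeomorphism on $\mathcal{U}$ with $\phi_0(\Lambda_\Gamma)\subset\R\subset\C$. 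For any probability measure $\nu$ supported on $\R\subset\C$ and any purely imaginary $\xi$, we have $|\widehat{\nu}(\xi)|=1$, so $\phi_0(\Lambda_\Gamma)$ has Fourier dimension zero, and $\Lambda_\Gamma$ does not have $C^2$-stable positive Fourier dimension.

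The main point to verify in the forward direction is the uniformity of the constants provided by Theorem \ref{main1} in the direction $u$ of the frequency $\xi$; this is what allows a single decay exponent $\epsilon$ to control all large $|\xi|$. In the reverse direction, the key input is the classical classification of Zariski closures of non-elementary Kleinian groups, which I would invoke as a standard fact rather than reprove.
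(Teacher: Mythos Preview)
Your proof is correct and follows essentially the same approach as the paper's: the forward direction applies Theorem~\ref{main1} to the phase $\langle u,\phi(z)\rangle$ with constants uniform in the direction $u$, and the reverse direction uses the classification fact that a non-Zariski-dense non-elementary Kleinian group preserves a generalized circle (the paper cites \cite{BZ1} for this), so that $\Lambda_\Gamma$ can be M\"obius-mapped into $\R$. One cosmetic point: ``dimension at most three'' is not quite the right bound for proper real algebraic subgroups of $\mathrm{PSL}_2(\C)$ (the Borel has real dimension $4$), but your subsequent elimination of solvable and compact closures is exactly the right argument and makes the dimension remark superfluous.
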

\noindent {\it Proof.} Assume first that $\Gamma$ is Zariski dense, and denote by $\phi:\mathcal{U}\rightarrow \phi(\mathcal{U})$ an arbitrary $C^2$-diffeomorphism with finite
$C^2$ norm on $\mathcal{U}$,
with $\mathcal {U} \supset \Lambda_{\Gamma}$ an open bounded set. Let $\phi^*\mu$ be the push-forward of a Patterson-Sullivan measure $\mu$,
then
$$\widehat{\phi^*\mu}(\xi)=\int_{\Lambda_{\Gamma}} e^{-i\langle \xi,\phi(z) \rangle}d\mu(z).$$
Set $\xi=t\theta$ where $t>0$ and $\vert \theta \vert=1$, so that we have
$$\widehat{\phi^*\mu}(\xi)=\int_{\Lambda_{\Gamma}} e^{-it\varphi_\theta(z)}d\mu(z), $$
with $\varphi_\theta(z)=\langle \theta, \phi(z)\rangle$. Notice that for all $z=x+iy \in \mathcal{U}$, we have
$$\vert \nabla_z \varphi_\theta \vert^2=(\langle \theta, \partial_x \phi(z)\rangle)^2 +(\langle \theta, \partial_y \phi(z)\rangle)^2.$$
Because $\phi$ is a diffeomorphism, for all $z\in \mathcal{U}$ we get that $\partial_x \phi(z)$ and $\partial_y \phi(z)$ are linearly independent
vectors, which obviously implies that $\nabla_z \varphi_\theta \neq 0$. Because $\Vert \varphi_\theta \Vert_{C^2}$ can be bounded uniformly in $\theta$,
we can apply Theorem \ref{main1} to deduce that for all $\vert \xi \vert\geq 1$, we have
$$\vert \widehat{\phi^*\mu}(\xi)\vert \leq C\vert \xi \vert^{-\epsilon},$$
for some $\epsilon>0$. Hence $\phi(\Lambda_{\Gamma})$ has positive Fourier dimension.

Conversely, assume that $\Gamma$ is {\it not} Zariski dense. Consider the Zariski closure $H$ of $\Gamma$ in $\rm{PSL}_2(\C)$. There is a general fact, see for example \cite{BZ1} and references herein, 
which says that a non-compact proper Lie subgroup of $H\subset \mathrm{Isom}^+(\H^3)$  which has no fixed point for its action on $\partial \H^3$ has an invariant totally geodesic proper submanifold of $\H^3$. Since $\Gamma$ is 
taken non-elementary, $\Gamma$ must therefore leave invariant a circle for its action on $\partial \H^3=\widehat{\C}$. It is not difficult to see then that 
$\Lambda_{\Gamma}$ must be included in that invariant circle. As a consequence, the limit set of $\Gamma$ can be mapped inside the real line $\R$
by a M\"obius map $\phi$. But clearly for any finite Borel measure $\nu$ supported on $\phi(\Lambda_{\Gamma})\subset \R$, we have
$$\widehat{\nu}(\xi)=\int_{\phi(\Lambda_{\Gamma})} e^{-i\langle \xi,z\rangle}d\nu(z)=\nu(\phi(\Lambda_{\Gamma}))\neq 0,$$
whenever $\Re(\xi)=0$. Hence $\phi(\Lambda_{\Gamma})$ has zero Fourier dimension. $\square$

\bigskip
\begin{center}
\includegraphics[scale=0.35]{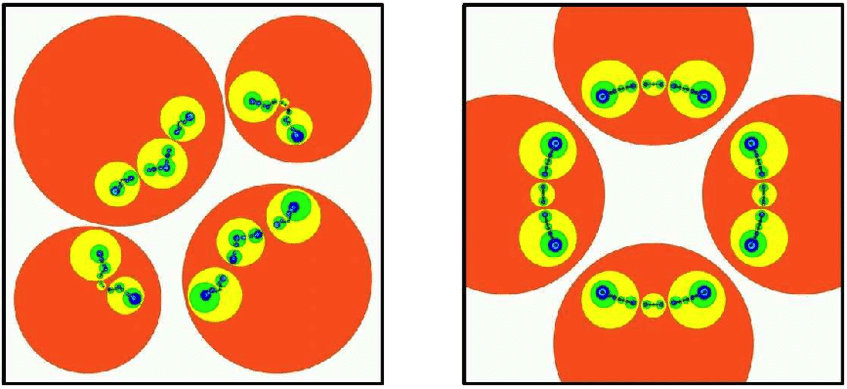}\\
{\small Figure 1: On the left a Zariski dense case, on the right a Fuchsian case, where $C^2$-stable positive Fourier dimension fails.}
\end{center}
 \bigskip \noindent

 \subsection{About the proof of Theorem \ref{main1}}
 Let us now comment on the structure of the proof. After some preliminary facts and notations related to Schottky subgroups of $\mathrm{PSL}_2(\C)$ gathered in $\S2$, we show in $\S 3$ how Theorem \ref{main1} follows from an estimate on decay of exponential sums based on Bourgain-Gamburd sum-product estimate on $\mathbb{C}$, under a {\it non-concentration}
 hypothesis and this generalizes the main ideas of \cite{BD1}. Unfortunately, this non-concentration hypothesis cannot be verified by elementary methods as was done in the $\mathrm{PSL}_2(\R)$ case in \cite{BD1}. We have in particular to check that this non-concentration property holds uniformly "in every direction", which requires the use of some more sophisticated arguments of representation theory and some regularity properties of Patterson-Sullivan measures borrowed from the work on random walks by Guivarc'h \cite{Gui90}. The last section is devoted to the proof of this non-concentration hypothesis which is the main difficulty of the paper. In the appendix,
 the first author proves that Patterson-Sullivan measures arise as stationary measures of certain random walks on $\mathrm{SL}_2(\C)$ with {\it finite exponential moment}, which allows us to use the key regularity property of Guivarc'h.
 
 Verifying non-concentration hypothesis is the main challenge when trying to apply discretized sum-product estimates. For example, in the breakthrough work of Bourgain-Gamburd \cite{BG08}, it is 
 precisely the non-concentration hypothesis that prevents them from obtaining a spectral gap outside of elements with algebraic entries. What's more, in our situation, the Fourier decay is almost equivalent to the non-concentration hypothesis, because the Fourier decay will imply a spectral gap of the transfer operator, which in turn can be used to get the non-concentration hypothesis.
 
 \bigskip \noindent
 {\it Acknowledgements.}
 
 \noindent
We would like to thank Jean-Fran\c cois Quint for pointing out to us the result of Roblin. Fr\'ed\'eric Naud thanks Semyon Dyatlov for stimulating discussions around his work with Jean Bourgain
at the IAS Emerging Topics workshop on "quantum chaos and fractal uncertainty principle" in October 2017. Wenyu Pan would like to thank Hee Oh and Federico Rodriguez-Hertz for their interest and encouragement on the project. 
 
 \section{Preliminary estimates on Schottky groups}
 In this section, we gather notations and important but elementary bounds that will be used in $\S 3$.
\noindent We use similar notations as the ones introduced in the Bourgain-Dyatlov paper \cite{BD1}. Recall that we are given a set of pairwise disjoint open topological discs $D_1,\ldots,D_{2r}$ and we fix a set of generators
$\gamma_1,\ldots,\gamma_r$ in $\mathrm{PSL}_2(\C)$ such that 
$$\forall i=1,\ldots,r,\ \gamma_i(\widehat{\C}\setminus \overline{D_{i+r}})=D_i,$$
see the figure below where $r=2$.
 \begin{center}
\includegraphics[scale=0.35]{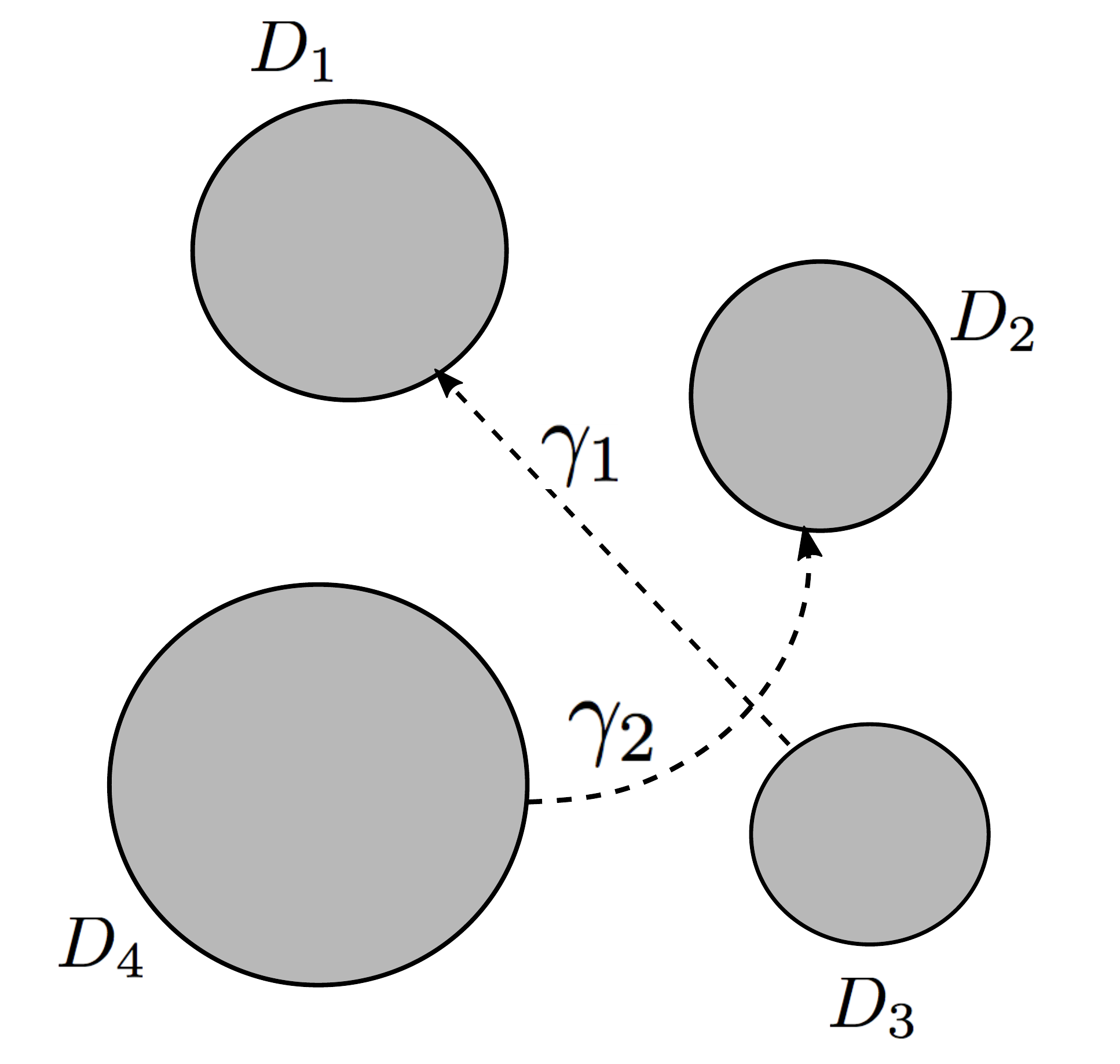}\\
{\small Figure 2: A Schottky pairing.}

\end{center}
For convenience, for $j=r+1,\ldots,2r$, we set $\gamma_j:=\gamma_{j-r}^{-1}$. By the usual ping-pong argument, $\gamma_1,\ldots,\gamma_{2r}$ generate a free group denoted by $\Gamma$ which is convex co-compact.
We will frequently use the notation 
$$\bf D:=\bigcup_{j\in \mathcal{A}} D_j,$$ where the alphabet $\mathcal{A}$ is just the finite set
$$\mathcal{A}:=\{1,\ldots,r,r+1,\ldots,2r \}.$$
Let $\Lambda_\Gamma$ be the limit set of $\Gamma$, defined as the set of accumulation points (in $\partial \H^3=\widehat{\C}$) for the action of $\Gamma$ on $\H^3$.
The action of $\Gamma$ on $\widehat{\C}\setminus \Lambda_\Gamma$ is proper discontinuous and $\widehat{\C}\setminus \bf D$ is a fundamental domain for this action. 
\begin{itemize}
\item For $a\in \mathcal{A}$, we set $\overline{a}:=a+r\  \mathrm{mod}\  2r$ such that $\gamma_{\overline{a}}=\gamma_a^{-1}$.
\item For $n\in\mathbb{N}_0$, define $\mathcal{W}_n$, the set of {\it reduced words} of length $n$, by
\begin{equation*}
\mathcal{W}_n:=\{a_1\ldots a_n| \,\,\,a_1,\ldots, a_n\in \mathcal{A},\,\,\, a_{j+1}\neq \overline{a_{j}}\,\,\,\text{for}\,j=1,\ldots, n-1\}.
\end{equation*}
Denote by $\mathcal{W}:=\cup_n \mathcal{W}_n$ the set of all words. The length of a word $\bf a=a_1\ldots a_n$ is denoted by $\vert \bf a \vert=n$.

Denote the empty word by $\emptyset$ and put $\mathcal{W}^{\circ}:=\mathcal{W}\backslash \{\emptyset\}$. For $\bf a=a_1\ldots a_n\in \mathcal{W}$, put $\bar{\bf a}:=\overline{a_n}\ldots \overline{a_1}\in \mathcal{W}$. 
If $\bf a\in \mathcal{W}^\circ$, put $\bf a':=a_1\ldots a_{n-1}\in \mathcal{W}$. 

\item For $\bf a=a_1\ldots a_n$, $\bf b=b_1\ldots b_m\in \mathcal{W}$, we write $\bf a\to \bf b$ if either at least one of $\bf a,\,\bf b$ is empty or $a_n\neq \overline{b_1}$. Under this condition the concatenation $\bf a \bf b$ is a word.

\item For $\bf a,\bf b\in \mathcal{W}$, we write $\bf a\prec \bf b$ if $\bf a$ is a prefix of $\bf b$, that is $\bf b=\bf a\bf c$ for some $\bf c\in \mathcal{W}$.

\item For $\bf a=a_1\ldots a_n$, $\bf b=b_1\ldots b_m\in \mathcal{W}^{\circ}$, we write $\bf a \qar \bf b$ if $a_n=b_1$. Note that when $\bf a\qar \bf b$, the concatenation $\bf a'\bf b$ is a word of length $n+m-1$.

\item For each $\bf a=a_1\ldots a_n\in \mathcal{W}$, define the group element $\gamma_{\bf a}\in \Gamma$ by
\begin{equation*}
\gamma_{\bf a}:=\gamma_{a_1}\ldots \gamma_{a_n}.
\end{equation*}
Note that each element of $\Gamma$ is equal to $\gamma_{\bf a}$ for a unique choice of $\bf a$ and $\gamma_{\bar{\bf a}}=\gamma_{\bf a}^{-1}$, $\gamma_{\bf a\bf b}=\gamma_{\bf a}\gamma_{\bf b}$ when $\bf a\to \bf b$. 

\item We then define the {\it cylinder sets} associated to reduced words. Given $\bf a=a_1\ldots a_n \in \mathcal{W}^{\circ}$, we set 
$$D_{\bf a}:=\gamma_{\bf a'}(D_{a_n}).$$
Remark that cylinder sets are topological discs, but may not be convex at all in the non-classical case.
\item Given $\gamma \in \Gamma$ we will often write 
$$\gamma \simeq \begin{pmatrix}
	a & b\\ c & d
	\end{pmatrix},$$
meaning that we have :
$$\forall z\in {\C},\  \gamma(z)=\frac{az+b}{cz+d}\ \mathrm{with}\ ad-bc=1.$$

\end{itemize}
Finally, we warn the reader about constants: throughout the rest of this paper $C_\Gamma$ is a positive constant that depends only on $\Gamma$ (more accurately
on the choice of generators as above). This constant $C_\Gamma$ may change from line to line, while still being denoted the same. Given $x,y,C>0$, we denote by
$x \approx_{C} y$ the set of inequalities:
$$ C^{-1} y\leq x \leq C y.$$

The following estimates mimic the ones that are found in \cite{BD1}. However, since we do not work a priori with convex cylinder sets, all diameter estimates are replaced
with measure estimates with respect to the Patterson-Sullivan measure $\mu$, see below.
\subsection{A Lipschitz property}
For the rest of the paper, fix $\epsilon_0>0$ such that $2\epsilon_0>\inf_{j\neq l} d(D_j, D_l)$.

\begin{lem}\label{lem:lip log'}
	There exists $C_{\Gamma}>0$ such that the following holds. For any $\bf a\in \mathcal{W}^{\circ}$ and any $z,w\in \mathbb{C}$ with $d(z, D_{\ov {\bf a}})>\epsilon_0$ and $d(w, D_{\ov{\bf a}})>\epsilon_0$, we have
	\begin{equation}
	\label{lip log eq}
	|\gamma_{\bf a}'z-\gamma_{\bf a}'w|\leq C_{\Gamma}|z-w|(|\gamma_{\bf a}'z||\gamma_{\bf a}'w|)^{1/2}.
	\end{equation}
\end{lem}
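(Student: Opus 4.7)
The plan is to exploit the Möbius nature of $\gamma_{\mathbf a}$ and a geometric identification of its preimage of $\infty$. Write $\gamma_{\mathbf a}\simeq\begin{pmatrix}a&b\\c&d\end{pmatrix}$ with $ad-bc=1$, so that
\[
\gamma_{\mathbf a}'(z)=\frac{1}{(cz+d)^2}.
\]
If $c=0$, then $\gamma_{\mathbf a}'$ is constant and the inequality is trivial, so suppose $c\neq 0$. A direct computation factors the difference as
\[
\gamma_{\mathbf a}'z-\gamma_{\mathbf a}'w=\frac{-c(z-w)\bigl((cz+d)+(cw+d)\bigr)}{(cz+d)^2(cw+d)^2},
\]
so by the triangle inequality
\[
|\gamma_{\mathbf a}'z-\gamma_{\mathbf a}'w|\leq |z-w|\,(|\gamma_{\mathbf a}'z||\gamma_{\mathbf a}'w|)^{1/2}\cdot\Bigl(\tfrac{|c|}{|cz+d|}+\tfrac{|c|}{|cw+d|}\Bigr).
\]
So everything reduces to showing that $|c|/|cz+d|=1/|z+d/c|$ is uniformly bounded, i.e.\ that $z$ stays uniformly far from the pole $-d/c=\gamma_{\mathbf a}^{-1}(\infty)$.

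The geometric heart of the argument is the claim $-d/c\in D_{\ov{\mathbf a}}$. Indeed $\gamma_{\mathbf a}^{-1}=\gamma_{\ov{\mathbf a}}$, so $-d/c=\gamma_{\ov{\mathbf a}}(\infty)$; writing $\ov{\mathbf a}=\ov{a_n}\cdots\ov{a_1}$ I would prove by induction on $k$ that $\gamma_{\ov{a_k}}\cdots\gamma_{\ov{a_1}}(\infty)\in D_{\ov{a_k}\cdots\ov{a_1}}$. The base case uses $\infty\notin\ov{D_{a_1}}$ and the Schottky property $\gamma_{\ov{a_1}}(\widehat{\C}\setminus\ov{D_{a_1}})=D_{\ov{a_1}}$. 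The inductive step applies $\gamma_{\ov{a_{k+1}}}$ to $D_{\ov{a_k}\cdots\ov{a_1}}\subset D_{\ov{a_k}}$, which lies outside $\ov{D_{a_{k+1}}}$ since the word is reduced (so $a_{k+1}\neq\ov{a_k}$, hence $\ov{a_k}\neq a_{k+1}$ and the discs are disjoint); pushing it forward lands inside $D_{\ov{a_{k+1}}\cdots\ov{a_1}}$, as required. Taking $k=n$ gives $-d/c\in D_{\ov{\mathbf a}}$.

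Combining, the hypothesis $d(z,D_{\ov{\mathbf a}})>\epsilon_0$ forces $|z-(-d/c)|>\epsilon_0$, whence $|c|/|cz+d|<1/\epsilon_0$, and similarly for $w$. Plugging back yields the claim with $C_\Gamma=2/\epsilon_0$. There is no real obstacle here; the only substantive point is the location of the Möbius pole inside $D_{\ov{\mathbf a}}$, which is the standard Schottky-group observation recorded above.
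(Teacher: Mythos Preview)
Your proof is correct and follows essentially the same approach as the paper: the same algebraic factorization of $\gamma_{\bf a}'z-\gamma_{\bf a}'w$, the same reduction to bounding $1/|z+d/c|$, and the same geometric observation that $-d/c=\gamma_{\bf a}^{-1}(\infty)\in D_{\ov{\bf a}}$. You supply more detail than the paper does (the explicit induction locating the pole, the $c=0$ case, and the explicit constant $2/\epsilon_0$), but the argument is the same.
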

\begin{proof}
	Suppose that $\gamma_{\bf a}=\begin{pmatrix}
	a & b\\ c & d
	\end{pmatrix} $. Then 
	\begin{align*}
	|\gamma_{\bf a}' z-\gamma_{\bf a}'w| = &\left|  \frac{1}{(cz+d)^2}-\frac{1}{(cw+d)^2}\right|\\
	=&\left|\frac{c(z-w)}{(cz+d)(cw+d)}\left(\frac{1}{cz+d}+\frac{1}{cw+d}\right)\right| .
		\end{align*}
		Observe that 
	\[\frac{c(z-w)}{cz+d}=\frac{z-w}{z+d/c} \,\,\,\text{and}\,\,\,\frac{c(z-w)}{cw+d}=\frac{z-w}{w+d/c}. \]
	Moreover, we have$-d/c=\gamma_{\bf a}^{-1}(\infty)\in D_{\ov{\bf a}}$ and $d(z, D_{\ov {\bf a}}), d(w, D_{\ov{\bf a}})>\epsilon_0$. These facts imply 
	that \[\left|\frac{z-w}{z+d/c}\right|\leq C_{\Gamma}|z-w|,\,\,\,\left|\frac{z-w}{w+d/c}\right|\leq \CG |z-w| \]
	and the inequality (\ref{lip log eq}) follows.
\end{proof}

We recall the following fundamental formula for M\"obius transformations that will be used throughout the paper.
\begin{lem}
\label{elementary}
For any $\gamma\in \Gamma\backslash \{e\}$ and any $x,y\in \mathbb{C}\backslash \{\gamma^{-1}(\infty)\}$, we have
\begin{equation*}
|\gamma x- \gamma y|=|x-y| |\gamma' x|^{1/2} |\gamma' y|^{1/2}.
\end{equation*}
\end{lem}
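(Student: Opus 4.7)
The plan is to reduce the identity to a direct algebraic computation using the matrix representation of $\gamma$. Write $\gamma \simeq \begin{pmatrix} a & b \\ c & d \end{pmatrix}$ with $ad - bc = 1$, so that $\gamma(z) = (az+b)/(cz+d)$ for $z \in \mathbb{C} \setminus \{-d/c\}$. The excluded pole $-d/c$ is precisely $\gamma^{-1}(\infty)$, so the hypothesis $x,y \notin \{\gamma^{-1}(\infty)\}$ guarantees that all denominators appearing below are nonzero. The fact that $\gamma$ is only defined up to $\pm I$ in $\mathrm{PSL}_2(\mathbb{C})$ is harmless since the identity to be proved only involves absolute values.

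First I would compute the difference $\gamma x - \gamma y$ by placing the two fractions over a common denominator:
$$\gamma x - \gamma y = \frac{(ax+b)(cy+d) - (ay+b)(cx+d)}{(cx+d)(cy+d)}.$$
Expanding the numerator, all cross terms cancel except for $(ad-bc)(x-y)$, which equals $x-y$ by unimodularity. Therefore
$$\gamma x - \gamma y = \frac{x-y}{(cx+d)(cy+d)}.$$

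Next I would compute the derivative: differentiating $\gamma(z)=(az+b)/(cz+d)$ and again invoking $ad-bc=1$ gives $\gamma'(z) = 1/(cz+d)^2$, so $|\gamma'(z)|^{1/2} = |cz+d|^{-1}$. Taking absolute values in the previous display and substituting yields
$$|\gamma x - \gamma y| = \frac{|x-y|}{|cx+d|\,|cy+d|} = |x-y|\,|\gamma'(x)|^{1/2}\,|\gamma'(y)|^{1/2},$$
which is exactly the claimed identity. There is no real obstacle: the argument is purely formal manipulation of the matrix coefficients, and the only thing to be careful about is ensuring that the representative in $\mathrm{SL}_2(\mathbb{C})$ is normalized so that $ad-bc=1$, which is built into the notation $\gamma \simeq \begin{pmatrix} a & b \\ c & d \end{pmatrix}$ fixed just above the lemma.
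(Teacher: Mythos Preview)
Your proof is correct and is exactly the ``straightforward computation'' the paper alludes to; the paper gives no further details beyond that remark. There is nothing to add.
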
 The proof is by straightforward computation.

\subsection{Facts on Patterson-Sullivan measures}
We refer the reader to \cite{Sullivan1,Sullivan2} for the introduction of Patterson-Sullivan measures.
Let us recall some basic facts of Patterson-Sullivan theory which will be used in this paper. Let $\H^3$ be the upper half-space model of the hyperbolic space, given by
$$\H^3=\C_z\times \R^+_y,$$
endowed with the hyperbolic metric $g$ given by
$$g=\frac{dzd\overline{z}+dy^2}{y^2}.$$
We will fix a base point $o:=(0,1)\in \H^3$. Let $\Gamma$ be a convex co-compact group of isometries of $\H^3$, for example a Schottky group as defined previously.
For all $s>\delta_\Gamma$ and $x\in \H^3$, one sets
$$\mu_x^s:=\frac{1}{P_\Gamma(o,s)} \sum_{\gamma \in \Gamma} e^{-sd(x,\gamma (o))} \mathcal{D}_{\gamma o},$$
where $P_\Gamma(o,s)$ is the convergent Poincar\'e series
$$P_\Gamma(o,s):= \sum_{\gamma \in \Gamma} e^{-sd(o,\gamma (o))},$$
and $\mathcal{D}_x$ is the dirac mass at $x$. The distance $d(x,y)$ above is with respect to the hyperbolic metric. By taking {\it weak limits} of these measures as $s\rightarrow \delta_\Gamma$, one obtains a family of measures supported on the limit set (called Patterson-Sullivan measures)
$\Lambda_\Gamma$ satisfying the following properties:
\begin{itemize}
\item For all $\gamma \in \Gamma$, $\gamma^* \mu_x=\mu_{\gamma^{-1}x}$.
\item For all $x,x'$, we have $\mu_{x'}=e^{-\delta B_\xi(x',x)}\mu_x,$ where $B_\xi(x',x)$ is the Busemann cocycle defined by (here $\xi \in \partial \H^3$)
$$B_\xi(x,y)=\lim_{z\rightarrow \xi} (d(x,z)-d(y,z)).$$ The Busemann cocycle is a smooth function that can be expressed in terms of Poisson kernels.
\end{itemize}
An important fact is that given an isometry $\gamma$, we have 
$$e^{-B_\xi(\gamma^{-1}o,o)}=\vert \gamma'(\xi)\vert_{\bb S^2},$$
where $|\gamma'(\xi)|_{\bb S^2}$ is the derivative of $\gamma$ at $\xi$ for the {\it spherical metric} on $\C\cup\infty$.  In particular, Patterson-Sullivan measures satisfy the equivariant formula
\begin{equation}\label{equ:quasi-invariant}
\forall \gamma \in \Gamma,\ \gamma^*\mu_x=e^{-\delta B_\xi (\gamma^{-1}x,x)}\mu_x.
\end{equation}

Because these measures $\mu_x$ are all absolutely continuous with respect to each other, we will focus on $\mu:=\mu_o$ and refer to it as the "Patterson-Sullivan measure" on $\Lambda_\Gamma$.
Remark that given the above definition, it is a probability measure. Under the action of $\Gamma$, we have therefore the following key formula: for all bounded Borel function $f$ on $\C$ and $\gamma$ in $\Gamma$
\begin{equation}
\label{equivariance}
\int_{\Lambda_\Gamma}f(z)\dd\mu(z)=\int_{\Lambda_\Gamma}f(\gamma z)|\gamma'(z)|_{\bb S^2}^\delta\dd\mu(z). 
\end{equation}
The spherical metric on $\C\cup\{\infty \}$ can be written as $\frac{4|\dd z|^2}{(1+|z|^2)^2} \text{ for }z\in\C\cup \{\infty \}.$
Hence,
\[|\gamma'(z)|_{\bb S^2}=\frac{1+|z|^2}{1+|\gamma z|^2}|\gamma'(z)|. \]
For $\bf a\in \cal W$, we will use the notation
\begin{equation}\label{equ:waz}
w_{\bf a}(z):=|\gamma_{\bf a}'(z)|_{\bb S^2}^\delta.
\end{equation}

\subsection{Distortion estimates for M\"{o}bius transformations}
Let $\Gamma$ be a Schottky group as above. For $\gamma\simeq \begin{pmatrix}
a & b \\  c & d
\end{pmatrix}$, set $\|\gamma\|_E:=\sqrt{a^2+b^2+c^2+d^2}$ and $\|\gamma\|_{\mathcal{S}}:=|c|$. 
\begin{lem}\label{lem:norm c} There exists $C_\Gamma>0$ such that for all $\gamma$ in $\Gamma\backslash \{ e\}$, 
	we have $\|\gamma\|_{\mathcal{S}}\approx_{C_{\Gamma}}\|\gamma\|_{E}$.
\end{lem}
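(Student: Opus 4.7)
The bound $|c| \leq \|\gamma\|_E$ is immediate from the definitions, so the task reduces to producing a constant $C_\Gamma$ such that $|a|,|b|,|d| \leq C_\Gamma |c|$ for every $\gamma \in \Gamma \setminus \{e\}$. My strategy is to exploit the ping-pong structure directly: for $\gamma = \gamma_{\bf a}$ with $\bf a = a_1\ldots a_n$, one has $\gamma(\widehat{\C} \setminus \overline{D_{\overline{a_1}}}) \subset D_{a_1} \subset \bf D$, so $\gamma$ sends any point of the common exterior $\widehat{\C}\setminus \overline{\bf D}$ into the bounded set $\bf D$. I will test this property on $\infty$ and on one well-chosen finite probe point.

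\textbf{Bounds on $a$ and $d$.} Since $\infty$ lies in the fundamental domain $\widehat{\C}\setminus \bf D$, in particular $\infty \notin \overline{D_{\overline{a_1}}}$ and $\infty \notin \overline{D_{a_n}}$, so
\[\gamma(\infty) = a/c \in D_{a_1},\qquad \gamma^{-1}(\infty) = -d/c \in D_{\overline{a_n}}.\]
Setting $R:=\sup_{z \in \bf D}|z|<\infty$, both moduli are at most $R$, giving $|a|\le R|c|$ and $|d|\le R|c|$; incidentally $c \neq 0$, since otherwise $\gamma(\infty)=\infty \notin \bf D$, contradicting the above.

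\textbf{Bound on $b$.} Fix once and for all an auxiliary probe point $z_0 \in \C \setminus \overline{\bf D}$ (such a point exists because $\overline{\bf D}$ is a bounded subset of $\C$), and set $R':=|z_0|$. For any $\gamma = \gamma_{\bf a} \in \Gamma \setminus \{e\}$, the point $z_0$ lies outside $\overline{D_{\overline{a_1}}}$, hence $\gamma(z_0) \in D_{a_1}$ and $|\gamma(z_0)| \leq R$. Combining $|\gamma(z_0)| = |az_0+b|/|cz_0+d|$ with the previous step,
\[|az_0+b| \leq R|cz_0+d| \leq R(R'|c|+|d|) \leq R(R'+R)|c|,\]
and then the triangle inequality yields $|b| \leq |az_0+b| + R'|a| \leq \bigl(R(R'+R)+RR'\bigr)|c|$.

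\textbf{Main point.} The only subtlety is arranging that a \emph{single} probe point $z_0$ works uniformly in $\gamma$; this is why I choose $z_0$ in the common exterior of all the closed discs $\overline{D_i}$, so that $z_0 \notin \overline{D_{\overline{a_1}}}$ whichever first letter $a_1 \in \cal A$ occurs. The constants $R$ and $R'$ then depend only on the Schottky data. Together with the trivial reverse inequality this produces $\|\gamma\|_E \leq C_\Gamma |c|$ for an explicit $C_\Gamma = \bigl(2R^2+1+(R(R'+R)+RR')^2\bigr)^{1/2}$, establishing the equivalence $\|\gamma\|_{\mathcal S} \approx_{C_\Gamma} \|\gamma\|_E$.
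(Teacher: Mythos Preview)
Your argument is correct, and for the bound on $b$ it is in fact cleaner than the paper's.

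The bounds on $a$ and $d$ match the paper exactly. For $b$, the paper uses the specific point $0$ and argues that at least one of $\gamma(0),\gamma^{-1}(0)$ must lie in $\overline{\bf D}$: if both were outside, then $\gamma^2$ would carry the exterior point $\gamma^{-1}(0)$ to the exterior point $\gamma(0)$, forcing $\gamma^2=e$ and hence (by freeness of $\Gamma$) $\gamma=e$. This yields either $|b/d|\le C_\Gamma$ or $|b/a|\le C_\Gamma$, and one then combines with the earlier bounds on $a,d$. Your approach sidesteps both the case split and the free-group argument by fixing once and for all a finite probe point $z_0\in\C\setminus\overline{\bf D}$, so that $\gamma(z_0)\in\bf D$ holds for \emph{every} nontrivial $\gamma$ simultaneously; a direct estimate of $|az_0+b|/|cz_0+d|$ then controls $|b|$. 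This is more elementary and even gives an explicit constant.

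One small slip worth correcting: the ping-pong inclusion you invoke should read $\gamma_{\bf a}(\widehat{\C}\setminus\overline{D_{\overline{a_n}}})\subset D_{a_1}$, with the excluded disc indexed by the \emph{last} letter of $\bf a$, not the first. This does not affect your argument, since you only use the weaker consequence that $\gamma$ maps $\widehat{\C}\setminus\overline{\bf D}$ into $\bf D$.
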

\begin{proof}
         We will use the fact that $\widehat{\C}\setminus \bf D$ is a fundamental domain for the action of $\Gamma$ on $\widehat{\C}\setminus \Lambda_\Gamma$.
         In particular, if we have $\gamma \in \Gamma\setminus\{ e\}$ and $z\in \widehat{\C}\setminus \overline{\bf D}$, then
          $\gamma(z)\in \bf D$.
         First start with $C_{\Gamma}$ to be 
         $$C_\Gamma:=\max_{j\in \mathcal{A}}\{ \sup_{z\in D_j}|z|\}.$$
	  The bound $\|\gamma\|_E\geq |c|$ is trivial. 
	  Now pick any $\gamma\simeq \begin{pmatrix}
a & b \\  c & d
\end{pmatrix}\in \Gamma\backslash \{e\}$.	
         Since $\infty$ is not contained in $\overline{\bf D}$, we have therefore $\gamma(\infty),\gamma^{-1}(\infty)\in \bf D$. Hence $c\neq 0$ and 
	\[ C_{\Gamma}\geq |\gamma(\infty)|=|a/c|, \,\,\,C_{\Gamma}\geq |\gamma^{-1}(\infty)|=|d/c|.  \]
	These imply $|a|,|d|\leq C_{\Gamma}|c|$. 
	
	Now we can bound $|b|$. Observe that  one of the points $\gamma(0),\gamma^{-1}(0)$ must be in $\overline{\bf D}$. Otherwise, we have two points $z=\gamma(0)$ and $w=\gamma^{-1}(0)$ outside of $\overline{\bf D}$, but $\gamma^2w=z$. This forces $\gamma^2$ to be the identity. But $\Gamma$ is a free group, therefore
	$\gamma$ is also the identity. A contradiction. Hence
	\[\text{either}\,\,\,C_{\Gamma}\geq |\gamma(0)|=|b/d| \,\,\,\text{ or }\,\,\,C_{\Gamma}\geq |\gamma^{-1}(0)|=|b/a|.\]
This yields	
	\[\text{either}\,\,\, |b|\leq C_{\Gamma}|d|\,\,\,\text{ or}\,\,\,|b|\leq C_{\Gamma}|a|. \]
	Therefore, we have $\|\gamma\|_E\leq C_\Gamma' |c|$.
\end{proof}

\begin{lem}\label{lem:lip derivative}
	There exists $C_{\Gamma}>0$ such that for all $b\in \mathcal{A}$, all $x\in D_b$ and all word $\bf a$ with $\bf a\qar b$, we have
	\begin{align*}
	C_{\Gamma}^{-1}\|\gamma_{\bf a'}\|_{\mathcal{S}}^{-2}\leq |\gamma'_{\bf a'}x|\leq C_{\Gamma}\|\gamma_{\bf a'}\|_{\mathcal{S}}^{-2}.
	\end{align*}
\end{lem}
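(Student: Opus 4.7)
The plan is to reduce the derivative estimate to locating the pole of $\gamma_{\bf a'}$, using the Schottky ping-pong dynamics.

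First, write $\gamma_{\bf a'}\simeq\begin{pmatrix}a&b\\c&d\end{pmatrix}$, so by Lemma \ref{lem:norm c} we have $\|\gamma_{\bf a'}\|_{\mathcal{S}}=|c|\approx_{\CG}\|\gamma_{\bf a'}\|_{E}$. Using that $ad-bc=1$, the formula for the derivative of a M\"obius map gives
\begin{equation*}
|\gamma_{\bf a'}'(x)|=\frac{1}{|cx+d|^{2}}=\frac{1}{|c|^{2}\,|x-\gamma_{\bf a'}^{-1}(\infty)|^{2}},
\end{equation*}
since $\gamma_{\bf a'}^{-1}(\infty)=-d/c$. Thus it suffices to prove that $|x-\gamma_{\bf a'}^{-1}(\infty)|\approx_{\CG} 1$ uniformly for $x\in D_b$ and $\bf a\qar b$.

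Next I would locate the pole. Writing $\bf a=a_1\ldots a_n$, we have $\gamma_{\bf a'}^{-1}(\infty)=\gamma_{\overline{\bf a'}}(\infty)=\gamma_{\overline{a_{n-1}}}\gamma_{\overline{a_{n-2}}}\cdots\gamma_{\overline{a_1}}(\infty)$. Since $\infty\notin\overline{\bf D}$, the Schottky property $\gamma_{\overline{a_1}}(\widehat{\C}\setminus\overline{D_{a_1}})=D_{\overline{a_1}}$ puts $\gamma_{\overline{a_1}}(\infty)\in D_{\overline{a_1}}$. Reducedness of $\bf a$ gives $a_2\neq\overline{a_1}$, so $D_{\overline{a_1}}\cap\overline{D_{a_2}}=\emptyset$, and applying $\gamma_{\overline{a_2}}$ sends this point into $D_{\overline{a_2}}$. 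Iterating this ping-pong argument, we conclude
\begin{equation*}
\gamma_{\bf a'}^{-1}(\infty)\in D_{\overline{a_{n-1}}}.
\end{equation*}

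Now the hypothesis $\bf a\qar b$ means $a_n=b$, and reducedness of $\bf a$ gives $a_{n-1}\neq\overline{a_n}=\overline{b}$, hence $\overline{a_{n-1}}\neq b$. Therefore $D_{\overline{a_{n-1}}}$ and $D_b$ are two distinct members of the family $\{D_j\}_{j\in\mathcal{A}}$, so
\begin{equation*}
|x-\gamma_{\bf a'}^{-1}(\infty)|\geq \min_{i\neq j}\dis(D_i,D_j)>0,\qquad |x-\gamma_{\bf a'}^{-1}(\infty)|\leq \dia(\bf D)<\infty,
\end{equation*}
both constants depending only on $\Gamma$. Plugging into the formula for $|\gamma_{\bf a'}'(x)|$ yields the claimed two-sided bound.

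The only delicate point is the case $|\bf a|=1$, where $\bf a'=\emptyset$ and $\gamma_{\bf a'}=\id$, so $\|\gamma_{\bf a'}\|_{\mathcal S}=0$; this is either excluded implicitly (the lemma being used only for $|\bf a|\geq 2$) or handled separately with the trivial bound $|\gamma_{\bf a'}'(x)|=1$. Otherwise, the argument is a direct consequence of the ping-pong dynamics together with the identity $|cx+d|=|c|\,|x-\gamma_{\bf a'}^{-1}(\infty)|$, and I do not foresee any serious obstacle.
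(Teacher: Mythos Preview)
Your argument is correct and follows essentially the same route as the paper's proof: write $|\gamma_{\bf a'}'(x)|=|c|^{-2}|x-\gamma_{\bf a'}^{-1}(\infty)|^{-2}$, observe that the pole lies in $D_{\overline{a_{n-1}}}\subset\bf D$ while $x\in D_b$ with $b\neq\overline{a_{n-1}}$, and conclude. The paper's version is terser and omits the explicit ping-pong location of the pole, which you spell out. Your reference to Lemma~\ref{lem:norm c} is harmless but unnecessary, since the statement is already phrased in terms of $\|\gamma_{\bf a'}\|_{\mathcal S}=|c|$; and your remark on the degenerate case $|\bf a|=1$ is a valid observation that the paper leaves implicit.
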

\begin{proof}
	Suppose that $\gamma_{\bf a'}\simeq \begin{pmatrix}
	a & b \\ c & d
	\end{pmatrix}\in \slc$. Then $\gamma'_{\bf a'}x=\frac{1}{c^2(x+d/c)^2}$.  As $x\in D_b$, we have that $|x+d/c|=|x-\gamma^{-1}_{\bf a'}(\infty)|\geq 1/\CG$. Meanwhile, we have $x,\gamma^{-1}_{\bf a'}(\infty)\in \bf D$. Hence $|\gamma'_{\bf a'}x|\in [1/\CG,\CG]\|\gamma\|_{\mathcal{S}}^{-2}$.
	
	
\end{proof}
\begin{lem}\label{lem:ia gammaa}
For any $\bf a\in \mathcal{W}^{\circ}$, we have
\begin{equation*}
C_{\Gamma}^{-1} \mu(D_{\bf a})\leq |\gamma'_{\bf a'}(x)|^{\delta}\leq C_{\Gamma}\mu(D_{\bf a})\,\,\,\text{for any}\,\,\, x\in D_{a_n}.
\end{equation*}
\end{lem}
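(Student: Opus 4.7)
The plan is to use the quasi-invariance of the Patterson--Sullivan measure (equation \eqref{equivariance}) to rewrite $\mu(D_{\bf a})$ as an integral of the conformal derivative over $D_{a_n}$, and then to show that this derivative is essentially constant on $D_{a_n}$ by appealing to Lemma \ref{lem:lip derivative}.

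First, since $D_{\bf a}=\gamma_{\bf a'}(D_{a_n})$ and $\mu$ is supported on $\Lambda_\Gamma$, applying \eqref{equivariance} to the function $f=\mathbbm{1}_{D_{\bf a}}$ with the transformation $\gamma_{\bf a'}$ gives
\[
\mu(D_{\bf a})=\int_{\Lambda_\Gamma}\mathbbm{1}_{D_{\bf a}}(\gamma_{\bf a'}z)\,|\gamma_{\bf a'}'(z)|_{\mathbb{S}^2}^{\delta}\,d\mu(z)=\int_{D_{a_n}\cap\Lambda_\Gamma}|\gamma_{\bf a'}'(z)|_{\mathbb{S}^2}^{\delta}\,d\mu(z),
\]
using that $\gamma_{\bf a'}(D_{a_n})=D_{\bf a}$ and $\gamma_{\bf a'}^{-1}(D_{\bf a})\cap\Lambda_\Gamma=D_{a_n}\cap\Lambda_\Gamma$.

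Next I would convert the spherical derivative into the Euclidean one using the formula $|\gamma'(z)|_{\mathbb{S}^2}=\frac{1+|z|^2}{1+|\gamma z|^2}|\gamma'(z)|$. Both $z\in D_{a_n}\subset\bf D$ and $\gamma_{\bf a'}(z)\in D_{\bf a}\subset\bf D$ lie in the bounded set $\bf D$, so $1+|z|^2$ and $1+|\gamma_{\bf a'}z|^2$ are each pinched between constants depending only on $\Gamma$. Thus $|\gamma_{\bf a'}'(z)|_{\mathbb{S}^2}\approx_{C_\Gamma}|\gamma_{\bf a'}'(z)|$ on $D_{a_n}$. Applying Lemma \ref{lem:lip derivative} (with $b=a_n$, noting that $\bf a\qar a_n$ since $a_n$ is the last letter of $\bf a$), we get
\[
|\gamma_{\bf a'}'(z)|\approx_{C_\Gamma}\|\gamma_{\bf a'}\|_{\mathcal S}^{-2}\approx_{C_\Gamma}|\gamma_{\bf a'}'(x)|
\]
for all $z,x\in D_{a_n}$. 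Raising to the power $\delta$ preserves the two-sided comparison up to constants depending only on $\Gamma$ and $\delta$ (hence on $\Gamma$ alone), and pulling this constant out of the integral yields
\[
\mu(D_{\bf a})\approx_{C_\Gamma}|\gamma_{\bf a'}'(x)|^{\delta}\cdot\mu(D_{a_n}\cap\Lambda_\Gamma)=|\gamma_{\bf a'}'(x)|^{\delta}\cdot\mu(D_{a_n}).
\]

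The remaining step, and the only mildly nontrivial point, is to absorb $\mu(D_{a_n})$ into the constant. The alphabet $\mathcal A$ is finite (of cardinality $2r$), each $D_j$ is an open set meeting $\Lambda_\Gamma$ (it contains, e.g., the attracting fixed point of $\gamma_j$), and the Patterson--Sullivan measure assigns positive mass to every nonempty relatively open subset of $\Lambda_\Gamma$ by the minimality of the $\Gamma$-action on $\Lambda_\Gamma$ together with quasi-invariance. Hence $\min_{j\in\mathcal A}\mu(D_j)\geq c_\Gamma>0$ and of course $\mu(D_{a_n})\leq 1$, so $\mu(D_{a_n})\approx_{C_\Gamma}1$ and the claimed two-sided bound follows.
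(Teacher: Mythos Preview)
Your proof is correct and follows essentially the same approach as the paper: both express $\mu(D_{\bf a})$ as $\int_{D_{a_n}} w_{\bf a'}\,d\mu$ via the quasi-invariance formula, invoke Lemma~\ref{lem:lip derivative} to see that the integrand is $\approx_{C_\Gamma}\|\gamma_{\bf a'}\|_{\mathcal S}^{-2\delta}\approx_{C_\Gamma}|\gamma_{\bf a'}'(x)|^{\delta}$ uniformly on $D_{a_n}$, and then absorb $\mu(D_{a_n})$ into the constant. You are simply more explicit about the spherical-versus-Euclidean derivative comparison and about why $\mu(D_{a_n})>0$, which the paper leaves tacit.
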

\begin{proof}
	Due to Lemma \ref{lem:lip derivative}, it suffices to show $\mu(D_{\bf a})\approx_{C_{\Gamma}} \|\gamma_{\bf a'}\|^{-2\delta}_{\mathcal{S}}$.  We have
\begin{equation*}
\mu(D_{\bf a})=\int_{D_{a_n}} w_{\bf a'}(x) d\mu(x).
\end{equation*}
By Lemma  \ref{lem:lip derivative}, we have
\begin{equation*}
C_{\Gamma}^{-1} \|\gamma_{\bf a'} \| ^{-2\delta}_{\mathcal{S}}\leq w_{\bf a'}(x)\leq C_{\Gamma} \|\gamma_{\bf a'} \| ^{-2\delta}_{\mathcal S}\,\,\,\text{on}\,\,\, D_{a_n}
\end{equation*}
and Lemma \ref{lem:ia gammaa} follows.
\end{proof}

\subsection{More distortion estimates}
\label{distortion-estimates}
\begin{lem}
	\label{lem:contraction}
We have the following contraction property: for any $\bf a\in \mathcal{W}^{\circ},\,b\in \mathcal{A},\,\bf a\to b$, we have
\begin{equation}
\mu(D_{\bf a b})\leq (1-C_{\Gamma}^{-1})\mu(D_{\bf a}).
\end{equation}
\end{lem}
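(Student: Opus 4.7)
The plan is to exhibit, inside $D_{\bf a}$, a "sibling" cylinder of $\mu$-mass comparable to $\mu(D_{\bf a})$ that is disjoint from $D_{\bf a b}$; the contraction estimate then follows at once.

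First, since $r\geq 2$ we have $|\mathcal{A}|=2r\geq 4$, and because $\bf a\to b$ forces $b\neq \ov{a_n}$, there is at least one letter $c\in \mathcal{A}$ distinct from both $b$ and $\ov{a_n}$. Then $\bf a c$ is a reduced word, $D_{\bf a c}=\gamma_{\bf a}(D_c)$ sits inside $D_{\bf a}=\gamma_{\bf a'}(D_{a_n})$, and $D_{\bf a c}\cap D_{\bf a b}=\emptyset$ since $\gamma_{\bf a}$ is injective while $D_c\cap D_b=\emptyset$. It therefore suffices to show that
\[
\mu(D_{\bf a c})\;\geq\; C_\Gamma^{-1}\,\mu(D_{\bf a}),
\]
as this yields $\mu(D_{\bf a b})\leq \mu(D_{\bf a})-\mu(D_{\bf a c})\leq (1-C_\Gamma^{-1})\mu(D_{\bf a})$.

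To produce that inequality I would unfold both cylinders via the equivariance formula \eqref{equivariance}, which reads $\mu(\gamma_{\bf a'}(E))=\int_E w_{\bf a'}(z)\,\dd\mu(z)$ for Borel $E\subset \widehat{\C}$. Writing $D_{\bf a c}=\gamma_{\bf a'}(D_{a_n c})$ gives
\[
\frac{\mu(D_{\bf a c})}{\mu(D_{\bf a})}\;=\;\frac{\int_{D_{a_n c}}w_{\bf a'}(z)\,\dd\mu(z)}{\int_{D_{a_n}}w_{\bf a'}(z)\,\dd\mu(z)}.
\]
By Lemma~\ref{lem:lip derivative}, $|\gamma_{\bf a'}'(z)|\approx_{C_\Gamma} \|\gamma_{\bf a'}\|_{\mathcal S}^{-2}$ for every $z\in D_{a_n}$, and the spherical correction $(1+|z|^2)/(1+|\gamma_{\bf a'}(z)|^2)$ in $w_{\bf a'}$ lies in a bounded interval because $z\in D_{a_n}\subset \bf D$ and $\gamma_{\bf a'}(z)\in D_{\bf a}\subset \bf D$. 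Hence $w_{\bf a'}$ is pinched between two $\Gamma$-dependent constants times $\|\gamma_{\bf a'}\|_{\mathcal S}^{-2\delta}$ uniformly on $D_{a_n}$, and the ratio above reduces to
\[
\frac{\mu(D_{\bf a c})}{\mu(D_{\bf a})}\;\geq\; C_\Gamma^{-1}\;\frac{\mu(D_{a_n c})}{\mu(D_{a_n})}.
\]

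Finally, the pair $(a_n,c)$ lives in the \emph{finite} index set $\{(a,c)\in \mathcal{A}^2 : c\neq \ov a,\ c\neq b\}$, and for each such pair both $\mu(D_{a_n c})$ and $\mu(D_{a_n})$ are strictly positive numbers depending only on $\Gamma$. Taking the minimum over this finite family produces a positive $\Gamma$-constant lower bound, which closes the argument. I do not expect any genuine obstacle here; the only point requiring slight care is matching the spherical derivative appearing in $w_{\bf a'}$ with the Euclidean one bounded by Lemma~\ref{lem:lip derivative}, and this is immediate because all the relevant points remain confined to the bounded region $\bf D$.
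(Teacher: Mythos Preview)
Your proof is correct and follows essentially the same approach as the paper's: both unfold the cylinder via the equivariance formula and use the bounded distortion of $w_{\bf a'}$ on $D_{a_n}$ (you via Lemma~\ref{lem:lip derivative}, the paper via its consequence Lemma~\ref{lem:ia gammaa}) to reduce to a uniform positive lower bound over finitely many second-generation cylinders. The only cosmetic difference is that the paper bounds $\mu(D_{\bf a}\setminus D_{\bf a b})$ directly through $\mu(D_{a_n}\setminus D_{a_n b})$, whereas you pick a single sibling $D_{\bf a c}\subset D_{\bf a}\setminus D_{\bf a b}$; since $D_{a_n c}\subset D_{a_n}\setminus D_{a_n b}$, your argument is a special case of theirs.
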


\begin{proof}
We have
\begin{equation}
\mu(D_{\bf a}\backslash D_{\bf a b})=\int_{D_{a_n}\backslash D_{a_n b}} w_{\bf a'}(x)d\mu(x)\geq C_{\Gamma}^{-1}\mu(D_{\bf a})\mu(D_{a_n}\backslash D_{a_n b}),
\end{equation}
where we use Lemma \ref{lem:ia gammaa} to obtain the inequality on the right. As we have a uniform non-trivial lower bound for the measure of the sets of the form $D_{a_n}\backslash D_{a_nb}$, the proof of Lemma \ref{lem:contraction} is complete.
\end{proof}

\begin{lem}[Parent-child ratio]
\label{lem-pa-ch}
For any $\bf a \in \mathcal{W}^{\circ},\,b\in \mathcal{A},\,\bf a\to b$, we have
\begin{equation}
\label{pa-ch}
C_{\Gamma}^{-1} \mu(D_{\bf a})\leq \mu(D_{\bf a b})\leq \mu(D_{\bf a}).
\end{equation}
\end{lem}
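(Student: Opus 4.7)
The plan is to get the upper bound from a set inclusion and the lower bound from $\Gamma$-equivariance of $\mu$ combined with Lemma~\ref{lem:ia gammaa}.

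For the upper bound, I first establish the cylinder nesting $D_{\bf a b} \subset D_{\bf a}$. Since $(\bf a b)' = \bf a$ (the prefix obtained by dropping the last letter $b$), we have $D_{\bf a b} = \gamma_{\bf a}(D_b) = \gamma_{\bf a'}(\gamma_{a_n}(D_b))$, whereas $D_{\bf a} = \gamma_{\bf a'}(D_{a_n})$. The Schottky relation $\gamma_{a_n}(\widehat{\C}\setminus \ov{D_{\ov{a_n}}})=D_{a_n}$, together with the hypothesis $\bf a \to b$ (i.e.\ $b \neq \ov{a_n}$) and the pairwise disjointness of the discs, gives $D_b \subset \widehat{\C}\setminus \ov{D_{\ov{a_n}}}$, hence $\gamma_{a_n}(D_b) \subset D_{a_n}$. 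Applying $\gamma_{\bf a'}$ yields $D_{\bf a b}\subset D_{\bf a}$, and the upper bound $\mu(D_{\bf a b})\leq \mu(D_{\bf a})$ follows at once.

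For the lower bound, I apply the equivariance identity~\eqref{equivariance} with $\gamma = \gamma_{\bf a}$. Since $\gamma_{\bf a}^{-1}(D_{\bf a b}) = D_b$, this gives
\[
\mu(D_{\bf a b}) = \int_{D_b} w_{\bf a}(z)\, d\mu(z) = \int_{D_b} |\gamma_{\bf a'}'(\gamma_{a_n}z)|_{\bb S^2}^\delta\, |\gamma_{a_n}'(z)|_{\bb S^2}^\delta\, d\mu(z)
\]
by the chain rule. For $z \in D_b$ we have $\gamma_{a_n}(z) \in D_{a_n}$ by the inclusion step above, so Lemma~\ref{lem:ia gammaa}, read with spherical rather than Euclidean derivatives up to the bounded conformal factor $\tfrac{1+|z|^2}{1+|\gamma z|^2}$ on the compact set $\ov{\bf D}$, shows that the first factor is $\approx_{C_\Gamma} \mu(D_{\bf a})$ uniformly in $z$. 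The second factor is uniformly bounded above and below in $z\in D_b$, because there are only finitely many generators and $z$ stays a positive distance $\geq \epsilon_0$ from the pole $\gamma_{a_n}^{-1}(\infty)\in D_{\ov{a_n}}$. Combining,
\[
\mu(D_{\bf a b}) \approx_{C_\Gamma} \mu(D_b)\, \mu(D_{\bf a}) \geq C_\Gamma^{-1}\mu(D_{\bf a}),
\]
where in the last step I use that $\mu(D_b)\geq \min_{j\in \mathcal{A}}\mu(D_j)>0$, since the Patterson--Sullivan measure charges every letter (each $D_j$ contains infinitely many cylinder points of $\Lambda_\Gamma$).

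I do not anticipate a real obstacle here: all the tools are already available, namely the Schottky nesting, Lemma~\ref{lem:ia gammaa}, and the equivariance formula~\eqref{equivariance}. The only genuine bookkeeping is the conversion between spherical and Euclidean derivatives on the bounded region $\ov{\bf D}$, which only costs bounded multiplicative constants absorbed into $C_\Gamma$; the positivity of $\mu(D_j)$ for each letter is automatic from the Patterson--Sullivan construction.
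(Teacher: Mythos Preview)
Your proof is correct and follows essentially the same approach as the paper: both obtain the upper bound from the cylinder inclusion and the lower bound by writing $\mu(D_{\bf a b})$ as an integral of a conformal weight via equivariance, then invoking Lemma~\ref{lem:ia gammaa} together with the uniform positivity of the measure on second-generation cylinders. The only cosmetic difference is that the paper pushes forward by $\gamma_{\bf a'}$ and integrates $w_{\bf a'}$ over $D_{a_n b}\subset D_{a_n}$, so Lemma~\ref{lem:ia gammaa} applies without the extra chain-rule split you perform; your route via $\gamma_{\bf a}$ and the factorization $w_{\bf a}=w_{\bf a'}(\gamma_{a_n}\cdot)\,w_{a_n}$ reaches the same place with one additional (harmless) step.
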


\begin{proof}
We just need to show the lower bound. We have
\begin{equation}
\label{eq-pa-ch}
\mu(D_{\bf a b})=\int_{D_{a_n b}} w_{\bf a'}(x)d\mu(x)\geq C_{\Gamma}^{-1}\mu(D_{\bf a})\mu (D_{a_n b}),
\end{equation}
where we use Lemma  \ref{lem:ia gammaa} to obtain the inequality on the right. As we have a uniform non-trivial lower bound for the measure of the sets of the form $\mu (D_{a_n b})$, (\ref{eq-pa-ch}) yields (\ref{pa-ch}).
\end{proof}

\begin{lem}[Concatenation]
\label{concatenation}
For any $\bf a, \bf b\in \mathcal{W}^{\circ},\, \bf a\qar \bf b$, we have
\begin{equation}
C_{\Gamma}^{-1}\mu(D_{\bf a})\mu(D_{\bf b})\leq \mu(D_{\bf a' \bf b})\leq C_{\Gamma} \mu(D_{\bf a})\mu(D_{\bf b}).
\end{equation}
\end{lem}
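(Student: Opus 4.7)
The plan is to apply Lemma \ref{lem:ia gammaa} three times together with the chain rule: once to rewrite $\mu(D_{\bf a'\bf b})$ in terms of a derivative at a point of $D_{b_m}$, and twice on each factor of the chain-rule decomposition.

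Write $\bf a=a_1\ldots a_n$, $\bf b=b_1\ldots b_m$, with $a_n=b_1$. First I would check that the concatenation $\bf a'\bf b'$ is still a reduced word: if $n=1$ then $\bf a'=\emptyset$ and the claim is immediate since $\mu(D_{\bf a})=\mu(D_{a_1})$ is bounded above and below by constants that depend only on $\Gamma$; if $n\geq 2$, then reducedness of $\bf a$ gives $a_{n-1}\neq \overline{a_n}=\overline{b_1}$, so $\bf a'\to \bf b'$. In particular $(\bf a'\bf b)'=\bf a'\bf b'$ and $\gamma_{(\bf a'\bf b)'}=\gamma_{\bf a'}\gamma_{\bf b'}$.

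Fix any $x\in D_{b_m}$. By Lemma \ref{lem:ia gammaa} applied to the word $\bf a'\bf b$ (whose last letter is $b_m$),
\begin{equation*}
\mu(D_{\bf a'\bf b})\approx_{C_\Gamma} |\gamma_{\bf a'\bf b'}'(x)|^\delta = |\gamma_{\bf a'}'(\gamma_{\bf b'}(x))|^\delta \cdot |\gamma_{\bf b'}'(x)|^\delta,
\end{equation*}
by the chain rule. The second factor is handled directly by Lemma \ref{lem:ia gammaa} applied to $\bf b$, giving $|\gamma_{\bf b'}'(x)|^\delta\approx_{C_\Gamma}\mu(D_{\bf b})$. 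For the first factor I must check that $\gamma_{\bf b'}(x)$ lies in $D_{a_n}$, which is exactly the hypothesis under which Lemma \ref{lem:ia gammaa} can be applied to $\bf a$. This follows from the nesting property of cylinder sets: by the Schottky pairing $\gamma_{b_j}(\widehat{\C}\setminus\overline{D_{\overline{b_j}}})=D_{b_j}$ and the fact that $b_{j+1}\neq \overline{b_j}$, one gets inductively $D_{\bf b}=\gamma_{\bf b'}(D_{b_m})\subset D_{b_1}=D_{a_n}$. Consequently $\gamma_{\bf b'}(x)\in D_{a_n}$ and Lemma \ref{lem:ia gammaa} yields $|\gamma_{\bf a'}'(\gamma_{\bf b'}(x))|^\delta\approx_{C_\Gamma}\mu(D_{\bf a})$.

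Combining the three comparisons gives the claimed two-sided estimate, with a constant that is a fixed power of $C_\Gamma$. There is no real obstacle beyond the careful bookkeeping of the prefix $\bf a'\bf b'=(\bf a'\bf b)'$ and the nesting $D_{\bf b}\subset D_{a_n}$ that makes the outer application of Lemma \ref{lem:ia gammaa} legal; the derivative estimate at $\gamma_{\bf b'}(x)$ would otherwise not be comparable to $\mu(D_{\bf a})$.
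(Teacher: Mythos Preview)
Your proposal is correct and follows essentially the same approach as the paper: both arguments rely on Lemma \ref{lem:ia gammaa} together with the nesting $D_{\bf b}\subset D_{b_1}=D_{a_n}$ coming from $\bf a\qar\bf b$. The paper phrases it via the pushforward identity $D_{\bf a'\bf b}=\gamma_{\bf a'}D_{\bf b}$ and an integral (as in the proof of Lemma \ref{lem-pa-ch}), whereas you unpack this pointwise with the chain rule, but the content is the same.
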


\begin{proof}
This follows from Lemma \ref{lem:ia gammaa} similarly to Lemma \ref{lem-pa-ch}, using that $D_{\bf a' \bf b}=\gamma_{\bf a'}D_{\bf b}$.
\end{proof}

\begin{lem}[Reversal]\label{lem:reversal}
For any $\bf a\in \mathcal{W}^{\circ}$, we have
\begin{equation}
C_{\Gamma}^{-1} \mu(D_{\bf a})\leq \mu (D_{\overline{\bf a}})\leq C_{\Gamma}\mu(D_{\bf a}).
\end{equation}
\end{lem}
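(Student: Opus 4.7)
The plan is to reduce the measure comparison in the Reversal lemma to a comparison of group-element norms, and then exploit the fact that inversion preserves the Frobenius norm on $\mathrm{SL}_2(\C)$.

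As a starting point, I would combine Lemma \ref{lem:ia gammaa} with Lemma \ref{lem:lip derivative} to record, for any $\bf a \in \mathcal{W}^{\circ}$, the identity
$$\mu(D_{\bf a}) \approx_{C_\Gamma} \|\gamma_{\bf a'}\|_{\mathcal{S}}^{-2\delta} \approx_{C_\Gamma} \|\gamma_{\bf a'}\|_E^{-2\delta},$$
where the second relation uses Lemma \ref{lem:norm c}. Applying this to both $\bf a$ and $\bar{\bf a}$, the statement of the lemma reduces to the Frobenius-norm comparison
$$\|\gamma_{\bf a'}\|_E \approx_{C_\Gamma} \|\gamma_{\bar{\bf a}'}\|_E.$$

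To prove this comparison, I would use two elementary observations about $\mathrm{SL}_2(\C)$. First, if $\gamma \simeq \begin{pmatrix} a & b \\ c & d \end{pmatrix}$ with $ad-bc = 1$, then $\gamma^{-1} \simeq \begin{pmatrix} d & -b \\ -c & a \end{pmatrix}$ has the same entries up to sign, so $\|\gamma\|_E = \|\gamma^{-1}\|_E$; in particular $\|\gamma_{\bf a}\|_E = \|\gamma_{\bar{\bf a}}\|_E$ since $\gamma_{\bar{\bf a}} = \gamma_{\bf a}^{-1}$. Second, the Frobenius norm is submultiplicative and the generators and their inverses satisfy $\max_{a\in \mathcal{A}}\|\gamma_a\|_E \leq C_\Gamma$, so left- or right-multiplication by a single generator changes $\|\cdot\|_E$ by at most a factor $C_\Gamma$. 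From the decompositions $\gamma_{\bf a} = \gamma_{\bf a'}\gamma_{a_n}$ and $\gamma_{\bar{\bf a}} = \gamma_{\bar{\bf a}'}\gamma_{\bar{a}_1}$ we then obtain
$$\|\gamma_{\bf a'}\|_E \approx_{C_\Gamma} \|\gamma_{\bf a}\|_E = \|\gamma_{\bar{\bf a}}\|_E \approx_{C_\Gamma} \|\gamma_{\bar{\bf a}'}\|_E,$$
and the Reversal lemma follows by raising to the $-2\delta$ power and using the starting identity.

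The argument is essentially mechanical and no step is a serious obstacle; the mildest subtlety is the equality $\|\gamma\|_E = \|\gamma^{-1}\|_E$, which is immediate from the $\mathrm{SL}_2(\C)$ inversion formula. In spirit, as the authors indicate, this mimics the proof of Lemma \ref{lem-pa-ch}: we control $\mu(D_\bullet)$ by a norm of $\gamma_{\bullet'}$ via Lemma \ref{lem:ia gammaa}, and then reason directly at the level of these norms.
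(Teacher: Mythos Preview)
Your proof is correct and follows essentially the same strategy as the paper: reduce $\mu(D_{\bf a})$ to a norm of the associated group element via Lemmas \ref{lem:lip derivative}--\ref{lem:ia gammaa}, and then use that this norm is invariant under inversion in $\mathrm{SL}_2(\C)$. The only cosmetic difference is bookkeeping: the paper strips off the first and last letters to reach a middle word $\bf b$ with $\gamma_{\bar{\bf b}}=\gamma_{\bf b}^{-1}$ and uses $\|\gamma_{\bf b}\|_{\mathcal S}=\|\gamma_{\bar{\bf b}}\|_{\mathcal S}$ directly, whereas you pad up to the full word $\gamma_{\bf a}$ via submultiplicativity of $\|\cdot\|_E$ and use $\|\gamma_{\bf a}\|_E=\|\gamma_{\bf a}^{-1}\|_E$; both routes exploit the same inversion symmetry.
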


\begin{proof}
Without loss of generality, we may assume that $|\bf a|\geq 3$. We write $\bf a=a_1\ldots a_n$ and denote $\bf b:=a_2\ldots a_{n-1}$, so that $\bf a=a_1\bf b a_n$. Since $D_{\bf a}=\gamma_{a_1}(D_{\bf b a_n})$ and $D_{\bar{\bf a}}=\gamma_{\overline{a_n}}(D_{\overline{\bf b}\overline{a_1}})$, it suffices to show that
\begin{equation}
\label{conca}
C_{\Gamma}^{-1}\mu(D_{\bf b a_n})\leq \mu(D_{\ov{\bf b}\ov{a_1}}) \leq C_{\Gamma} \mu (D_{\bf b a_n}).
\end{equation}
By Lemma \ref{lem:lip derivative}, we have 
\begin{align*}
\mu(D_{\bf b a_n})\approx_{C_{\Gamma}} \|\gamma_{\bf b}\|_{\mathcal{S}}^{-2\delta} \mu(D_{a_n})\,\,\,\text{and}\,\,\, \mu(D_{\ov{\bf b} \ov{a_1}}) \approx_{C_{\Gamma}} \|\gamma_{\ov{\bf b}}\|_{\mathcal{S}}^{-2\delta}\mu(D_{a_1}).
\end{align*}
It follows from the definition of $\|\cdot\|_{\mathcal{S}}$ that $\|\gamma_{\bf b}\|_{\mathcal{S}}=\|\gamma_{\ov{\bf b}}\|_{\mathcal{S}}$. Hence Lemma \ref{lem:reversal} follows.
\end{proof}

\begin{lem}[Separation]
\label{lem-sep}
For any $\bf a\in \mathcal{W}^{\circ}$ and any $b,c\in \mathcal{A}$ so that $\bf a\to b$ and $\bf a \to c$, we have
\begin{equation}
\label{sep}
\dis_E(D_{\bf a b},D_{\bf a c})\geq \CG^{-1} \mu(D_{\bf a})^{1/\delta}.
\end{equation}
\end{lem}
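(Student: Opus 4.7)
The plan is to exploit the equivariance $D_{\bf ab}=\gamma_{\bf a}(D_b)$ and $D_{\bf ac}=\gamma_{\bf a}(D_c)$ (which follows directly from the definition $D_{\bf w}=\gamma_{\bf w'}(D_{w_{|\bf w|}})$) together with the fundamental M\"obius identity in Lemma \ref{elementary}. Picking arbitrary $x\in D_b$ and $y\in D_c$, that identity gives
\[
|\gamma_{\bf a}(x)-\gamma_{\bf a}(y)|=|x-y|\,|\gamma_{\bf a}'(x)|^{1/2}|\gamma_{\bf a}'(y)|^{1/2},
\]
and the task reduces to lower-bounding each of the three factors.

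The distance factor is handled by the original Schottky separation hypothesis: assuming (as the title \emph{Separation} forces) that $b\neq c$, the discs $\overline{D_b}$ and $\overline{D_c}$ are disjoint, so $|x-y|\geq \dis_E(\overline{D_b},\overline{D_c})\geq \CG^{-1}$. The derivative factors are handled by Lemma \ref{lem:lip derivative} applied to the words $\bf ab$ and $\bf ac$: since $\bf ab\qar b$ with $(\bf ab)'=\bf a$, the lemma yields $|\gamma_{\bf a}'(x)|\geq \CG^{-1}\|\gamma_{\bf a}\|_{\mathcal{S}}^{-2}$ for $x\in D_b$, and symmetrically for $y\in D_c$. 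Combining these three bounds gives
\[
|\gamma_{\bf a}(x)-\gamma_{\bf a}(y)|\geq \CG^{-1}\|\gamma_{\bf a}\|_{\mathcal{S}}^{-2}
\]
uniformly in $x\in D_b$, $y\in D_c$.

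Finally, to match the right-hand side $\mu(D_{\bf a})^{1/\delta}$, convert the norm estimate into a measure estimate. Lemmas \ref{lem:lip derivative} and \ref{lem:ia gammaa} together give $\|\gamma_{\bf a}\|_{\mathcal{S}}^{-2\delta}\approx_{\CG}\mu(D_{\bf ab})$, and then the parent–child ratio Lemma \ref{lem-pa-ch} yields $\mu(D_{\bf ab})\approx_{\CG}\mu(D_{\bf a})$, so that $\|\gamma_{\bf a}\|_{\mathcal{S}}^{-2}\geq \CG^{-1}\mu(D_{\bf a})^{1/\delta}$. Taking the infimum over $x\in D_b$, $y\in D_c$ and relabeling the constant $\CG$ concludes the proof.

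I do not expect any real obstacle here: every ingredient is already in the preliminaries, and the argument is just bookkeeping. The only genuinely load-bearing input is the Schottky disjointness hypothesis $\overline{D_j}\cap\overline{D_i}=\emptyset$ for $i\neq j$, which provides the absolute lower bound on $|x-y|$; without it, the conformal expansion estimate alone would not suffice to separate the children of $D_{\bf a}$.
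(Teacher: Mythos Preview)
Your proof is correct and follows essentially the same approach as the paper: both arguments pull points in $D_{\bf ab}$ and $D_{\bf ac}$ back via a M\"obius map, apply the identity of Lemma~\ref{elementary}, and bound the derivative factors by $\mu(D_{\bf a})^{1/\delta}$. The only cosmetic difference is that the paper pulls back by $\gamma_{\bf a'}$ (landing in the second-generation discs $D_{a_n b},D_{a_n c}$) and invokes Lemma~\ref{lem:ia gammaa} directly, whereas you pull back by $\gamma_{\bf a}$ (landing in $D_b,D_c$) and then take a short detour through $\|\gamma_{\bf a}\|_{\mathcal S}^{-2}$ and the parent--child ratio; both routes are equally valid.
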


\begin{proof}
Denote $\bf a=a_1\ldots a_n$. For any $x\in D_{\bf a b},\,y\in D_{\bf a c}$, set $\tilde{x}=\gamma_{\bf a'}^{-1}(x)\in D_{a_n b},\,\tilde{y}=\gamma_{\bf a'}^{-1}(y)\in D_{a_n c}$. Using Lemma  \ref{elementary} and \ref{lem:ia gammaa}, we obtain
\begin{equation}
|x-y|\geq \CG^{-1} |\tilde{x}-\tilde{y}| \mu(D_{\bf a})^{1/\delta}.
\end{equation}
As we have a uniform non-trivial lower bound for the Euclidean distance between the second generation of discs, (\ref{sep}) follows.
\end{proof}

\subsection{Patterson-Sullivan measures II}
\begin{lem}
\label{ps-est}
Let $\Omega$ be any Euclidean disc of radius $\sigma$ contained in $D_{a}$ for some $a\in \mathcal{A}$. Then 
\begin{equation}
\label{ps-est-eq}
\mu(\Omega)\leq C_{\Gamma} \sigma^{\delta}.
\end{equation}
\end{lem}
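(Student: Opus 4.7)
The plan is to enclose $\Omega\cap\Lambda_\Gamma$ inside a cylinder $D_{\bf c}$ whose $\mu$-measure is already $O(\sigma^\delta)$, and then to invoke the Separation Lemma \ref{lem-sep} as the single quantitative input. The hypothesis $\Omega\subset D_a$ merely provides a starting cylinder at level one.

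Starting from $\bf c_0:=a$, so that $\Omega\cap\Lambda_\Gamma\subset D_{\bf c_0}$ by hypothesis, I would recursively build a sequence of prefixes $\bf c_0\prec \bf c_1\prec \bf c_2\prec\cdots$ as follows. The Schottky symbolic coding of the limit set yields the exact partition
$$\Lambda_\Gamma\cap D_{\bf c_k}\;=\;\bigsqcup_{b\,:\,\bf c_k\to b}\Lambda_\Gamma\cap D_{\bf c_k b}.$$
At step $k$, I check whether $\Omega\cap\Lambda_\Gamma$ sits entirely inside one child $D_{\bf c_k b^\ast}$; if so, set $\bf c_{k+1}:=\bf c_k b^\ast$ and continue, otherwise stop.

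If the construction never terminates, iterating the contraction estimate of Lemma \ref{lem:contraction} gives
$$\mu(D_{\bf c_k})\;\leq\;(1-C_\Gamma^{-1})^{k-1}\,\mu(D_a)\;\longrightarrow\;0,$$
so $\mu(\Omega)\leq\lim_k\mu(D_{\bf c_k})=0$ and the claim is trivial. Otherwise the procedure halts at some word $\bf c$, and by the very failure of the single-child condition there exist two distinct letters $b_1\neq b_2$ with $D_{\bf c b_i}\cap\Omega\cap\Lambda_\Gamma\neq\emptyset$; pick one point $x_i$ in each. The Separation Lemma \ref{lem-sep} then gives
$$|x_1-x_2|\;\geq\;\dis_E(D_{\bf c b_1},D_{\bf c b_2})\;\geq\;C_\Gamma^{-1}\,\mu(D_{\bf c})^{1/\delta},$$
while $|x_1-x_2|\leq 2\sigma$ since both points lie in a Euclidean disc of radius $\sigma$. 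Combining and using $\Omega\cap\Lambda_\Gamma\subset D_{\bf c}$,
$$\mu(\Omega)\;=\;\mu(\Omega\cap\Lambda_\Gamma)\;\leq\;\mu(D_{\bf c})\;\leq\;C_\Gamma\,\sigma^\delta.$$

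No serious obstacle appears: the lemma is a direct quantitative consequence of Lemma \ref{lem-sep}, with the recursive dichotomy serving only to locate a witness cylinder $\bf c$ that is not too large yet still captures all of $\Omega\cap\Lambda_\Gamma$. The one small technical point to double-check is the exactness of the symbolic partition above (namely that no limit point sits in $D_{\bf c_k}$ outside the union of its children), which follows from the ping-pong dynamics and the fact that cylinder boundaries contain only fixed points of parabolic/loxodromic conjugates, not generic limit points.
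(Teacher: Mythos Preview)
Your proof is correct and follows essentially the same approach as the paper: locate the longest word $\bf c$ with $\Omega\cap\Lambda_\Gamma\subset D_{\bf c}$, then use the Separation Lemma \ref{lem-sep} on two distinct children meeting $\Omega$ to bound $\mu(D_{\bf c})$ by $C_\Gamma\sigma^\delta$. Your recursive construction is just an explicit way of producing this longest word, and your treatment of the non-terminating case via Lemma \ref{lem:contraction} is a slightly more detailed version of the paper's one-line reduction to $\#(\Omega\cap\Lambda_\Gamma)\geq 2$.
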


\begin{proof}
We may assume $\# (\Omega\cap \Lambda_{\Gamma})\geq 2$. Let $\bf a\in \mathcal{W}^{\circ}$ be the longest word such that $\Omega\cap \Lambda_{\Gamma}\subset D_{\bf a}$. Then there are two different $b,c\in \mathcal{A}$ so that $\bf a\to b,\,\bf a\to c$ and $\Omega\cap D_{\bf a b}\neq \emptyset,\,\Omega\cap D_{\bf a c}\neq \emptyset$. By Lemma \ref{lem-sep}, the distance between $D_{\bf a b}$ and $D_{\bf a c}$ is bounded from below by $C_{\Gamma}^{-1}\mu(D_{\bf a})^{1/\delta}$. Hence (\ref{ps-est-eq}) follows. 
\end{proof}

Armed with Section \ref{distortion-estimates}, the following two lemmas do follow directly from the arguments in \cite{BD1}.

\begin{lem}[Lemma 2.14 in \cite{BD1}]
Assume that $\tau\in (0,1],\,\bf b \in \mathcal{W}^{\circ}$. Then
\begin{equation}
\#\{\bf a\in \mathcal{W}^{\circ}| \,\bf b \prec \bf a, \,\mu (D_{\bf a})\geq \tau^{\delta}\}\leq C_{\Gamma} \tau^{-\delta}\mu (D_{\bf b}).
\end{equation}
\end{lem}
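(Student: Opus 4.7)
The plan is to set $S:=\{\mathbf{a}\in\mathcal{W}^\circ:\mathbf{b}\prec\mathbf{a},\ \mu(D_{\mathbf{a}})\geq\tau^\delta\}$ and bound $\#S$ by reducing to a counting estimate on a maximal antichain. First I would record that $S$ has the structure of a finite rooted tree. Indeed, by the parent-child Lemma \ref{lem-pa-ch}, $\mu(D_{\mathbf{a}})\geq\mu(D_{\mathbf{a}c})$ along any extension, so $\mathbf{b}\prec\mathbf{c}\prec\mathbf{a}$ with $\mathbf{a}\in S$ forces $\mathbf{c}\in S$; and the contraction Lemma \ref{lem:contraction} gives $\mu(D_{\mathbf{a}})\leq\mu(D_{\mathbf{b}})(1-C_{\Gamma}^{-1})^{|\mathbf{a}|-|\mathbf{b}|}$, so word-lengths in $S$ are bounded. (If $\mu(D_{\mathbf{b}})<\tau^\delta$ then $S=\emptyset$ and the bound is trivial, so I assume $\mathbf{b}\in S$.)

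Next I would handle the set $S^{\max}$ of maximal elements of $S$. Incomparable reduced words yield disjoint cylinders $D_{\mathbf{a}}$ (they descend into distinct second-generation discs), and each $D_{\mathbf{a}}$ is a subset of $D_{\mathbf{b}}$ of $\mu$-mass at least $\tau^\delta$. Disjoint union gives
\[
\#S^{\max}\cdot\tau^\delta\leq\sum_{\mathbf{a}\in S^{\max}}\mu(D_{\mathbf{a}})\leq\mu(D_{\mathbf{b}}),
\]
so $\#S^{\max}\leq\tau^{-\delta}\mu(D_{\mathbf{b}})$.

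The heart of the argument, and the place where I expect the main subtlety, is showing that $\#S\leq C_{\Gamma}\,\#S^{\max}$. View $S$ as a tree rooted at $\mathbf{b}$ with leaf set $S^{\max}$; call a vertex \emph{linear} if it has exactly one child in $S$. The key point is that a maximal chain of linear vertices has length bounded by a constant $L=L(\Gamma)$. To see this, apply Lemma \ref{concatenation} to obtain $\mu(D_{\mathbf{a}c})\approx_{C_{\Gamma}}\mu(D_{\mathbf{a}})\,\mu(D_{a_{n}c})$ where $a_{n}$ is the last letter of $\mathbf{a}$; the finitely many two-letter measures $\mu(D_{a_{n}c})$ lie in a fixed interval $[m,M]\subset(0,1)$. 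Therefore if $\mu(D_{\mathbf{a}})\geq C_{\Gamma}\tau^\delta/m$ then every admissible child of $\mathbf{a}$ lies in $S$ (so $\mathbf{a}$ branches, since $r\geq 2$), and if $\mu(D_{\mathbf{a}})\leq\tau^\delta/(C_{\Gamma}M)$ then no child lies in $S$ (so $\mathbf{a}$ is a leaf). The linear regime thus confines $\mu(D_{\mathbf{a}})$ to a multiplicative interval of fixed ratio $C_{\Gamma}^{2}M/m$, and since $\mu$ strictly contracts by a uniform factor per step, only $O(1)$ consecutive linear vertices fit inside this window.

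With the chain bound in hand I would finish as follows. Contract each maximal linear chain in $S$ to a single edge, producing a tree $T'$ whose internal nodes each have $\geq 2$ children and whose leaves are exactly $S^{\max}$; the standard inequality for such trees gives $|T'|\leq 2\,\#S^{\max}$. Re-expanding each contracted edge inserts at most $L$ vertices, so
\[
\#S\leq 2L\cdot\#S^{\max}\leq 2L\cdot\tau^{-\delta}\mu(D_{\mathbf{b}})=C_{\Gamma}\tau^{-\delta}\mu(D_{\mathbf{b}}).
\]
The main obstacle is genuinely the linear-chain bound: a naive argument summing $\sum_{\mathbf{a}\in S}\mu(D_{\mathbf{a}})/\tau^\delta$ level by level only yields a loss of $\log(\mu(D_{\mathbf{b}})/\tau^\delta)$, and removing that logarithm requires exploiting the fact, contained in Lemma \ref{concatenation}, that all children of a vertex have mass comparable to their parent's up to a constant, so that either \emph{all} children of $\mathbf{a}$ stay in $S$ or none do except in a controlled transitional layer.
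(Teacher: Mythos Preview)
Your argument is correct. The paper gives no proof of its own here, simply pointing to \cite{BD1}, so strictly speaking there is nothing in the paper to compare against; but the argument the authors have in mind (Lemma~2.14 of \cite{BD1}) is shorter than your tree-contraction route. One decomposes $S$ dyadically in measure, $S_j:=\{\mathbf a\in S:\mu(D_{\mathbf a})\in[2^{j}\tau^{\delta},2^{j+1}\tau^{\delta})\}$; by the contraction Lemma~\ref{lem:contraction} any chain inside a fixed $S_j$ has length at most some $K=K(\Gamma)$, so $S_j$ is covered by $K$ antichains, each of cardinality at most $\mu(D_{\mathbf b})/(2^{j}\tau^{\delta})$ by disjointness of cylinders. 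Summing the geometric series over $j\geq 0$ gives the bound directly. Your approach reaches the same conclusion by classifying vertices as branching, linear, or leaves and contracting linear chains; this is a bit more structural, and one of your three cases is in fact redundant (the clause ``$\mu(D_{\mathbf a})\leq\tau^{\delta}/(C_{\Gamma}M)$ implies $\mathbf a$ is a leaf'' is typically vacuous since $\mathbf a\in S$ already forces $\mu(D_{\mathbf a})\geq\tau^{\delta}$, so the linear window is really just $[\tau^{\delta},C_{\Gamma}\tau^{\delta}/m)$). But the underlying mechanism---bounding how many generations the measure can remain in a fixed multiplicative window, via uniform contraction---is the same in both proofs.
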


\begin{lem}[Lemma 2.15 in \cite{BD1}]
\label{tree-count}
Let $\Omega$ be any Euclidean disc of radius $\sigma$ contained in $D_{a}$ for some $a\in \mathcal{A}$. For all $C_0\geq 2$, we have
\begin{equation}
\#\{\bf a\in \mathcal{W}^{\circ}|\,\tau^{\delta} \leq \mu(D_{\bf a})\leq C_0 \tau^{\delta},\, D_{\bf a}\cap \Omega\neq \emptyset\}\leq C_{\Gamma}\tau^{-\delta} \sigma^{\delta}+C_{\Gamma}\log C_0.
\end{equation}
\end{lem}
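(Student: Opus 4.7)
My plan is to follow the approach of Lemma 2.15 of \cite{BD1}, using the measure-based distortion estimates of $\S\ref{distortion-estimates}$ in place of interval lengths. Write $\mathcal{S}$ for the collection to be counted. Since cylinders starting with distinct letters are disjoint and $\Omega\subset D_a$, every $\bf a\in\mathcal{S}$ has first letter $a$, so $D_{\bf a}\subset D_a$. Decompose $\mathcal{S}$ dyadically by measure:
\[
\mathcal{S}_j := \{\bf a \in \mathcal{S} : \mu(D_{\bf a}) \in [2^j\tau^\delta,\, 2^{j+1}\tau^\delta]\},\quad j=0,\dots,J,
\]
with $J = \lceil\log_2 C_0\rceil$. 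Set $\tau_j := 2^{j/\delta}\tau$, so cylinders in $\mathcal{S}_j$ have mass $\asymp \tau_j^\delta$, and by Lemma~\ref{lem:ia gammaa} together with Lemma~\ref{elementary} also Euclidean diameter $\asymp \tau_j$.

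I first bound prefix-chains inside a single slice: if $\bf a\prec \bf b$ both lie in $\mathcal{S}_j$, iterating Lemma~\ref{lem:contraction} gives $\mu(D_{\bf b})\le (1-C_\Gamma^{-1})^{|\bf b|-|\bf a|}\mu(D_{\bf a})$, while the slice constraint forces the ratio $\mu(D_{\bf b})/\mu(D_{\bf a})\ge 1/2$; hence $|\bf b|-|\bf a|\le K_0$ for some $K_0 = K_0(\Gamma)$. Next I bound antichains: two incomparable $\bf a,\bf b\in \mathcal{S}_j$ have disjoint cylinders and, by Lemma~\ref{lem-sep} applied to their maximal common prefix $\bf c$, satisfy $\dis_E(D_{\bf a},D_{\bf b})\ge C_\Gamma^{-1}\mu(D_{\bf c})^{1/\delta}\ge C_\Gamma^{-1}\tau_j$. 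If both meet $\Omega$ (diameter $2\sigma$), this forces $\tau_j\le 2C_\Gamma\sigma$. So two regimes emerge: when $\tau_j > 2C_\Gamma\sigma$, $\mathcal{S}_j$ must be a prefix-chain, so $\#\mathcal{S}_j\le K_0+1$; when $\tau_j\le 2C_\Gamma\sigma$, the $\prec$-maximal elements $\mathcal{S}_{j,\max}$ form an antichain whose pairwise disjoint cylinders (each of mass $\ge \tau_j^\delta$, diameter $\lesssim\tau_j\lesssim\sigma$) all lie in a disc of radius $O(\sigma)$ around the center of $\Omega$. Lemma~\ref{ps-est} (extended to balls not necessarily contained in a single $D_b$ by covering and using that our cylinders lie in $D_a$) yields $\#\mathcal{S}_{j,\max}\le C_\Gamma\sigma^\delta/\tau_j^\delta$. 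Finally, each $\bf a\in\mathcal{S}_j$ is a prefix of some element of $\mathcal{S}_{j,\max}$, and the set of $\mathcal{S}_j$-prefixes of any fixed $\bf b\in\mathcal{S}_{j,\max}$ forms a chain of length $\le K_0+1$; double-counting gives $\#\mathcal{S}_j\le (K_0+1)\#\mathcal{S}_{j,\max}\le C_\Gamma'\sigma^\delta/\tau_j^\delta$.

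Summing over $j$: the slices with $\tau_j > 2C_\Gamma\sigma$ contribute at most $(K_0+1)(J+1)\le C_\Gamma\log C_0$, while the slices with $\tau_j\le 2C_\Gamma\sigma$ yield a geometric series
\[
\sum_{j\ge 0}\frac{C_\Gamma'\sigma^\delta}{\tau_j^\delta}=\frac{C_\Gamma'\sigma^\delta}{\tau^\delta}\sum_{j\ge 0}2^{-j}\le \frac{2C_\Gamma'\sigma^\delta}{\tau^\delta},
\]
and adding the two contributions gives the desired bound. The main technical subtlety is the antichain step in the small-$\tau_j$ regime: Lemma~\ref{ps-est} is stated only for discs contained in a single $D_a$, whereas the $O(\sigma)$-enlargement of $\Omega$ may stray outside $D_a$. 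This is handled by the (standard) Ahlfors regularity $\mu(B(x,r))\le C_\Gamma r^\delta$ for the Patterson--Sullivan measure on the limit set, which follows from Lemma~\ref{ps-est} via a finite covering argument.
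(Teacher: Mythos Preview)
Your argument is correct and follows the same strategy as the proof in \cite{BD1} that the paper defers to: a dyadic decomposition in the measure scale, combined with the contraction estimate (Lemma~\ref{lem:contraction}) to bound chain lengths and the separation estimate (Lemma~\ref{lem-sep}) together with the Ahlfors upper regularity of $\mu$ to bound antichains. The only point worth flagging is the one you already identified: the enlarged disc in the small-$\tau_j$ regime need not sit inside a single $D_b$, so Lemma~\ref{ps-est} does not apply verbatim; invoking the global bound $\mu(B(x,r))\le C_\Gamma r^\delta$ (used elsewhere in the paper, e.g.\ in the appendix via \cite[Theorem~7]{Sullivan2}) is the right fix.
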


\subsection{Partitions and transfer operators}
\label{pa-tran}
A partition $Z$ is a subset of words in $\mathcal{W}^\circ$ which is such that
$$\Lambda(\Gamma)=\bigsqcup_{\bf a\in Z} \left (\Lambda(\Gamma)\cap D_{\bf a}\right).$$
By the definition of Schottky groups, an obvious family of partitions is given for all $n\geq 1$ by 
$$Z=\mathcal{W}_n.$$
However, this natural choice turns out to be not the most convenient for our purpose, simply because elements corresponding to words with same
length may have very different distortion (derivative). Instead, similarly as in \cite{BD1}, we will consider $\tau>0$ a parameter (destined to be taken small later on), and set
\begin{equation}
\label{partition}
Z(\tau):=\{ \bf a \in \mathcal{W}^\circ\ |\ \mu(D_{\bf a})\leq \tau^\delta< \mu(D_{\bf a'})\}.
\end{equation}
The fact that for all $\tau>0$ small enough $Z(\tau)$ is a partition follows readily from Lemma \ref{lem:contraction} and its consequence: there exist uniform $C_\Gamma$ and $0<\rho<1$ such that for all $\bf a \in \mathcal{W}^\circ$, 
$$\mu(D_{\bf a})\leq C_\Gamma \rho^{\vert \bf a \vert}.$$
Notice that by definition of $Z(\tau)$ and using Lemma \ref{lem-pa-ch} we get that as $\tau \rightarrow 0$,
$$\tau^{-\delta}\leq \# Z(\tau)\leq C \tau^{-\delta}.$$
Moreover, we have the following estimate.
\begin{lem}
\label{lem:zctau}
	For $\tau>0,C>1$, let 
	\begin{equation}\label{equ:zctau}
	Z(C,\tau)=\{\bf b\in \mathcal{W}\ | \ C^{-1}\tau^{\delta}\leq \mu(D_{\bf b})\leq C\tau^{\delta} \}.
	\end{equation}
	Then there exists $l\in \bb N$ independent of $\tau$ such that
	\[Z(C,\tau)\subset Z(C\tau)\times \cup_{0\leq n\leq l}\cal W_n. \]
\end{lem}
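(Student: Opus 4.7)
The plan is to define, for each $\mathbf{b} \in Z(C,\tau)$, a decomposition $\mathbf{b} = \mathbf{a}\mathbf{c}$ where $\mathbf{a}$ is the unique prefix of $\mathbf{b}$ lying in the partition $Z(C\tau)$, and then to bound the length of the suffix $\mathbf{c}$ by a constant $l$ depending only on $C$, $\delta$, and $\Gamma$. Since $\mathbf{b}$ is recovered from the pair $(\mathbf{a}, \mathbf{c})$ by concatenation, this assignment is injective and gives the claimed inclusion.

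First I would identify the prefix. The set $Z(C\tau)$ is a partition of $\Lambda_\Gamma$ consisting of minimal-length words $\mathbf{a}$ with $\mu(D_{\mathbf{a}}) \leq (C\tau)^\delta$. Since the measures of prefixes are non-increasing as letters are appended, any word $\mathbf{w}$ with $\mu(D_{\mathbf{w}}) \leq (C\tau)^\delta$ has a unique prefix in $Z(C\tau)$. Because $\mathbf{b} \in Z(C,\tau)$ satisfies $\mu(D_{\mathbf{b}}) \leq C\tau^\delta$, after enlarging $C$ by a multiplicative constant depending only on $\delta$ (which only increases $l$ by a bounded amount, so we may absorb this), we may assume $\mu(D_{\mathbf{b}}) \leq (C\tau)^\delta$, and such a prefix $\mathbf{a}$ exists. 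Write $\mathbf{b} = \mathbf{a}\mathbf{c}$ with $\mathbf{c} = c_1\ldots c_n \in \mathcal{W}_n$, $n \geq 0$ (the concatenation being a reduced word since $\mathbf{b}$ is).

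Next I would bound $n$ using the contraction property. Applying Lemma \ref{lem:contraction} iteratively $n$ times,
\[
\mu(D_{\mathbf{b}}) = \mu(D_{\mathbf{a}c_1\ldots c_n}) \leq (1 - C_\Gamma^{-1})^n \, \mu(D_{\mathbf{a}}) \leq (1 - C_\Gamma^{-1})^n \, (C\tau)^\delta.
\]
Combining with the lower bound $\mu(D_{\mathbf{b}}) \geq C^{-1}\tau^\delta$ from $\mathbf{b} \in Z(C,\tau)$, we obtain $(1 - C_\Gamma^{-1})^n \geq C^{-1-\delta}$, so
\[
n \leq \left\lceil \frac{(1+\delta)\log C}{\log\bigl(1/(1 - C_\Gamma^{-1})\bigr)} \right\rceil =: l,
\]
which depends only on $C$, $\delta$ and $\Gamma$, not on $\tau$.

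The main (minor) obstacle is the mild mismatch between the two thresholds $C\tau^\delta$ used to define $Z(C,\tau)$ and $(C\tau)^\delta$ used to define $Z(C\tau)$. Since these differ by the multiplicative factor $C^{1-\delta}$, passing from one to the other costs at most a bounded number of additional applications of the parent-child Lemma \ref{lem-pa-ch}, which is precisely what is absorbed into the definition of $l$. Apart from this routine bookkeeping, the argument is a direct iteration of the contraction and parent-child estimates already established in Section \ref{distortion-estimates}.
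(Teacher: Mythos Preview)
Your proof is correct and follows exactly the same approach as the paper's: decompose $\mathbf{b}$ as a prefix in $Z(C\tau)$ followed by a suffix whose length is bounded via the uniform contraction Lemma~\ref{lem:contraction}. Your explicit computation of $l$ and your remark on the threshold mismatch $C\tau^\delta$ versus $(C\tau)^\delta$ go slightly beyond the paper's two-sentence argument, but the underlying idea is identical.
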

\begin{proof}
For any $\bf b\in Z(C,\tau)$, by the construction of $Z(C,\tau)$, we can express $\bf b$ as $\bf b=\bf b' \bf b''$ with $\bf b'\in Z(C\tau)$. 
	Note that $|\bf b''|$ is bounded by a constant depending on $C$ due to the uniform contracting property (Lemma \ref{lem:contraction}). Then Lemma \ref{lem:zctau} follows. 
\end{proof}
To each partition $Z(\tau)$ we will associate a transfer operator $\lt_{Z(\tau)}$ acting on functions which is such that for all $f$ bounded Borel on $\Lambda(\Gamma)$, we have
$$\int_{\Lambda(\Gamma)} fd\mu= \int_{\Lambda(\Gamma)} \lt_{Z(\tau)}(f)d\mu.$$
Formula (\ref{equivariance}) shows that for all $j\in \mathcal{A}$,
$$\lt_{Z(\tau)}f(x)=\sum_{\bf a \in Z(\tau),\ \bf a \qar j} w_{\bf a'}(x) f(\gamma_{\bf a'}(x))\ \mathrm{if}\ x\in D_j.$$
This formula can be iterated to give
\begin{equation}
\label{transfer1}
\lt_{Z(\tau)}^kf(x)=\sum_{\bf a_1,\ldots, \bf a_k \atop \bf a_1\qar \ldots \qar \bf a_k\qar j} w_{\bf a_1'\ldots \bf a'_k}(x)f(\gamma_{\bf a_1'\ldots \bf a'_k}(x)).
\end{equation}
The rough strategy of the proof of Theorem \ref{main1} is then to write
$$\int_{\Lambda(\Gamma)} e^{it\varphi(x)}g(x)d\mu= \int_{\Lambda(\Gamma)} \lt_{Z(\tau)}^k(e^{it\varphi}g)d\mu,$$
one hopes to {\it catch cancellations} in the exponential sums
$$\sum_{\bf a_1,\ldots, \bf a_k \atop \bf a_1\qar \ldots \qar \bf a_k\qar j} w_{\bf a_1'\ldots \bf a'_k}(x)e^{it\varphi(\gamma_{\bf a_1'\ldots \bf a'_k}x)}g(\gamma_{\bf a_1'\ldots \bf a'_k}x),$$
with $\tau\asymp \vert t\vert^{-\beta}$, and $\beta>0$ suitably chosen. We will make this more precise in $\S$\ref{sp}.

 \section{Sum-products and decay of oscillatory integrals}
 \label{sp}
 
  In this section, we prove Theorem \ref{main1}. The key tool is an estimate on the decay of exponential sums (Proposition \ref{thm:sum product}). In Lemma \ref{lem:lip log'}, we've established the Lipschitz property of the derivatives of the elements in $\Gamma$. Using this and the H\"{o}lder inequality, we follow the scheme in \cite[Lemma 3.4, 3.5]{BD1} to obtain a combinatorial description of the oscillatory integral in concern which allows us to control it via certain exponential sums. We finish the proof of Theorem \ref{main1} by applying Proposition \ref{thm:sum product} to the exponential sum in (\ref{com-sum}).
  \begin{prop}\label{thm:sum product}
	Given $\kappa>0$, there exist $\epsilon>0$ and $k\in \mathbb{N}$ such that the following holds for $\eta\in\C $ with $|\eta|>1$. Let $C_0>0$ and let $\lambda_1,\cdots, \lambda_k$ be Borel measures supported on the annulus $\{z\in \mathbb{C}: 1/C_0\leq |z|\leq C_0\}$ with total mass less than $C_0$. Assume that each $\lambda_j$ satisfies the projective non concentration property, that is,
	\begin{equation}\label{equ:non concentration}
	\forall\sigma\in[C_0|\eta|^{-1},\,C_0^{-1}|\eta|^{-\epsilon}],\quad\sup_{a\in\bb R,\theta\in \R}\lambda_j\{z\in\C \ | \ |\Re(e^{i\theta}z)-a|\leq \sigma \}\leq C_0\sigma^{\kappa}.
	\end{equation}	
	Then there exists a constant $C_1$ depending only on $C_0,\kappa$ such that
	\begin{equation}
	\left|\int \exp(2\pi i\Re(\eta z_1\cdots z_k))\dd\lambda_1(z_1)\cdots \dd\lambda_k(z_k)\right|\leq C_1|\eta|^{-\epsilon}.
	\end{equation}
\end{prop}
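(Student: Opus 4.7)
My plan is to adapt the scheme of Bourgain–Dyatlov \cite[Proof of Lemma 4.1]{BD1}, replacing the one-dimensional sum-product input with the Bourgain–Gamburd discretized ring theorem on $\C$ (the very tool that first appeared in their work on spectral gaps for $\mathrm{SU}(2)$). The statement is essentially a $k$-fold multilinear Fourier decay estimate, and the heuristic is that if each $\lambda_j$ spreads in all projective directions, then the $k$-fold multiplicative convolution spreads enough that its Fourier transform at scale $|\eta|$ decays polynomially.

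\textbf{Step 1 (Dyadic reduction and discretization).} Fix a scale $\sigma=|\eta|^{-\alpha}$ for a small $\alpha>0$ to be tuned. By a dyadic pigeonhole on the local density of each $\lambda_j$, I reduce to the case in which each $\lambda_j$ is, up to an acceptable loss, supported on a union of balls of radius $|\eta|^{-1}$ with roughly uniform weight. Replacing each $\lambda_j$ by a suitable convex combination of indicator functions of such balls, the integral becomes a finite sum of the form
\[
\sum_{z_1\in A_1,\dots,z_k\in A_k} w_1(z_1)\cdots w_k(z_k)\,\exp\!\bigl(2\pi i\,\Re(\eta\,z_1\cdots z_k)\bigr),
\]
with $A_j\subset\C$ being $|\eta|^{-1}$-separated and satisfying the same projective non-concentration as $\lambda_j$ (with slightly worse constants).

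\textbf{Step 2 (Multilinear to bilinear reduction).} I collapse $z_2\cdots z_k$ into a single variable $w$ by pushforward, so as to isolate one multiplication. Because all $\lambda_j$ are supported in the annulus $\{1/C_0\le|z|\le C_0\}$, the multiplication map is bi-Lipschitz on the product and distorts the non-concentration hypothesis only by bounded factors; thus the pushed-forward measure $\nu=\lambda_2\otimes\cdots\otimes\lambda_k \,\text{(multiplied)}$ retains projective non-concentration with the same exponent $\kappa$ (and slightly worse $C_0$). This reduces the problem to a bilinear exponential sum
\[
\Bigl|\sum_{a\in A,\,b\in B} \mu_A(a)\mu_B(b)\exp\!\bigl(2\pi i\,\Re(\eta\,ab)\bigr)\Bigr|,
\]
where $\mu_A,\mu_B$ satisfy the projective non-concentration on $\C$.

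\textbf{Step 3 (Bourgain–Gamburd sum-product on $\C$).} I invoke the discretized sum-product / bilinear decay theorem on $\C$: if $A,B\subset\C$ are $\delta$-separated sets of cardinality $\delta^{-s_A},\delta^{-s_B}$ with non-concentration of exponent $\kappa$ in every projective direction, then $|A+A|\cdot |A\cdot A|\gtrsim \delta^{-s_A-c}$ for some $c=c(\kappa)>0$, and consequently the bilinear exponential sum above is bounded by $|\eta|^{-\epsilon_0}$ times the total masses, for some $\epsilon_0=\epsilon_0(\kappa)>0$. Iterating Plünnecke–Ruzsa over $k$ multiplicative convolutions gives the claimed decay once $k$ is taken large enough (depending only on $\kappa$) to absorb the logarithmic and dyadic losses from Step 1.

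\textbf{Main obstacle.} The delicate point is Step 2: while on $\R$ the non-concentration of a measure is trivially preserved under multiplication by a nonzero scalar, on $\C$ multiplication acts as a rotation composed with dilation. A concentration direction of $\nu=w\cdot(\lambda_2\cdots\lambda_k)$ depends on the phase of $w$, so the projective non-concentration must hold \emph{uniformly in $\theta$}. This is precisely why the hypothesis \eqref{equ:non concentration} is stated over all $\theta\in\R$; the proof must track this uniformity through the pushforward and through each step of the dyadic reduction. Verifying this inheritance—and showing that the annulus support keeps all distortions under control—is the technical heart of the argument, and mirrors why the non-concentration hypothesis of Theorem \ref{main1} is itself stated in every direction and is the object of the more difficult verification carried out later in the paper.
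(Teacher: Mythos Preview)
The paper does not prove this proposition in detail; it refers to Bourgain's Lemma~8.43 in \cite{bourgain2010discretized} with the real discretized sum-product theorem replaced by its complex analogue \cite[Proposition~2]{bourgain_spectral_2012}, and to \cite[Appendix~4.1]{li-sumproduct_2018}. Your proposal has the right ingredients (discretization, the Bourgain--Gamburd input on $\C$, tracking uniformity in $\theta$), but the logical architecture is off in a way that matters.

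The problem is the interaction between your Step~2 and Step~3. In Step~2 you collapse $z_2\cdots z_k$ into a single pushforward measure $\nu$ and observe (correctly) that $\nu$ inherits projective non-concentration with the \emph{same} exponent $\kappa$: multiplication by a fixed $w$ in the annulus sends a $\sigma$-strip to another $\sigma$-strip up to bounded dilation and a rotation, and one integrates out using the non-concentration of $\lambda_2$ alone. But if that were enough, the proposition would hold with $k=2$, and it does not. There is no black-box ``bilinear exponential decay theorem'' on $\C$ of the form you invoke in Step~3; the discretized sum-product statement $|A+A|\cdot|AA|\gtrsim |A|\,\delta^{-c}$ gives only a \emph{small} gain per multiplication, and passing from sum-product to exponential-sum decay goes through additive energy and Balog--Szemer\'edi--Gowers, not directly. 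Your final sentence about ``iterating Pl\"unnecke--Ruzsa over $k$ multiplicative convolutions'' is the right instinct, but it is incompatible with having already collapsed those $k$ convolutions into one variable in Step~2.

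The scheme that actually works (and is what Lemma~8.43 does) runs by contradiction. Assume the integral exceeds $|\eta|^{-\epsilon}$. A Cauchy--Schwarz/$L^2$-flattening step shows that the $k$-fold multiplicative convolution $\lambda_1\boxtimes\cdots\boxtimes\lambda_k$ has large additive energy at scale $|\eta|^{-1}$. Balog--Szemer\'edi--Gowers then extracts a $\delta$-discretized set $A$ of large measure with $|A+A|\lesssim |A|^{1+\epsilon'}$. Now one applies the discretized sum-product theorem on $\C$: since $A$ (and each $\lambda_j$) is non-concentrated in every direction with exponent $\kappa$, small doubling forces $|A\cdot\lambda_j$-support$|$ to be at least $|A|\,\delta^{-c(\kappa)}$. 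Iterating this growth through each of the $k$ factors drives the entropy of $A$ above the ambient dimension once $k\gtrsim 1/c(\kappa)$, a contradiction. The large $k$ is not a device for absorbing dyadic losses; it is the number of sum-product iterations needed to reach full dimension from an initial gain of $c(\kappa)$ per step.
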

As for the proof of the proposition, it has already been pointed out in \cite{BD1} that it can be shown by following the proof of Lemma 8.43 in \cite{bourgain2010discretized} and replacing the real version of the sum-product theorem \cite[Theorem 1]{bourgain2010discretized} by its complex version established in \cite[Proposition 2]{bourgain_spectral_2012}. We refer readers to \cite[Appendix 4.1]{li-sumproduct_2018} for more details.

\subsection{A combinatorial description of the oscillatory integral}
We now begin the proof of Theorem \ref{main1}. In this section $C$ is a  constant depending only on the Schottky data and the constants $M, M'$ in Theorem \ref{main1}. It may change from line to line. Let $k\in \mathbb{N}$ be the constant in Proposition \ref{thm:sum product}, which depends only on $\kappa$, which is fixed once for all and given by
Proposition \ref{thm:no concentration}. Let $t$ be the frequency parameter in (\ref{main equation}). Without loss of generality we may assume that $|t|\geq C$. Define the small number $\tau>0$ by
\begin{equation}
|t|=\tau^{-2k-3/2}.
\end{equation}
Let $Z(\tau)\subset \mathcal{W}^{\circ}$ be the partition defined in (\ref{partition}) and let $\mathcal{L}_{Z(\tau)}$ be the associated transfer operator, see $\S$\ref{pa-tran}. 

We follow the notation introduced in   \cite{BD1}:
\begin{itemize}
\item for $\bf a=a_1\ldots a_n\in \mathcal{W}^{\circ}$ and $z\in \mathbb{C}$, write $\bf a\wa z$ if $z\in D_{a_n}$; 

\item for $\gamma=\gamma_{\bf a}\in\Gamma$ with $\bf a=a_1\ldots a_n$, we write $\gamma\rightarrow z$ or $\bf a\rightarrow z$ if $z\notin D_{\ov{a_n}}$;

\item we denote
\begin{equation*}
\bf A=(\bf a_0,\ldots, \bf a_k)\in Z(\tau)^{k+1},\,\,\, \bf B=(\bf b_1,\ldots, \bf b_k)\in Z(\tau)^k;
\end{equation*}

\item we write $\bf A \leftrightarrow \bf B$ if and only if $\bf a_{j-1}\qar \bf b_j \qar \bf a_j$ for all $j=1,\ldots, k$;

\item if $\bf A \lar \bf B$, then we define the words $\bf A * \bf B:=\bf a_0' \bf b_1' \bf a_1' \bf b_2' \cdots \bf a_{k-1}' \bf b_{k}' \bf a_k'$ and $\bf A \# \bf B:=\bf a_0' \bf b_1' \bf a_1' \bf b_2'\cdots \bf a_{k-1}' \bf b_k'$;

\item denote by $b(\bf A)\in \mathcal{A}$ the last letter of $\bf a_k$;

\item for each $\bf a\in \mathcal{W}^{\circ}$, fix a point $x_{\bf a}\in D_{\bf a}$;

\item for $j\in \{1,\ldots, k\}$ and $\bf b\in Z(\tau)$ such that $\bf a_{j-1}\qar \bf b \qar \bf a_j$, define
\begin{equation*}
\zeta_{j,\bf A}(\bf b):=\tau^{-2} \gamma'_{\bf a_{j-1}'\bf b'}(x_{\bf a_j}).
\end{equation*}
\end{itemize}
Using the functions $\varphi,g$ from the statement of Theorem \ref{main1}, define
\begin{equation*}
f(x):=\exp (it \varphi(x)) g(x),\,\,\,x\in \Lambda_{\Gamma}.
\end{equation*}
By  (\ref{transfer1}), the integral in (\ref{main equation}) can be written as follows:
\begin{equation*}
\int_{\Lambda_{\Gamma}} fd\mu =\int_{\Lambda_{\Gamma}} \mathcal{L}_{Z(\tau)}^{2k+1} f d\mu=\sum_{\bf A, \bf B: \bf A\leftrightarrow \bf B} \int_{D_{b(\bf A)}} f(\gamma_{\bf A* \bf B}(x)) w_{\bf A * \bf B}(x) d\mu(x).
\end{equation*}
The following lemma follows almost the same lines with \cite[Lemma 3.4]{BD1}. This idea is to use the Lipschitz property of $w_{\bf A \# \bf B}$ (Lemma \ref{lem:lip log'}) to obtain an approximation for $w_{\bf A \# \bf B}(x)$ and then use Schwartz's inequality to get the following bound.
\begin{lem}\label{lem:os1}
We have
\begin{equation}
\label{os1}
\left\lvert \int_{\Lambda_{\Gamma}} f d\mu\right\lvert^2\leq C \tau^{(2k-1)\delta} \sum_{\bf A,\bf B: \bf A\leftrightarrow \bf B} \left\lvert  \int_{D_{b(\bf A)}} e^{it \varphi (\gamma_{\bf A *\bf B}(x))} w_{\bf{a}_k'}(x)d\mu(x)\right\lvert ^2+C\tau^2.
\end{equation}
\end{lem}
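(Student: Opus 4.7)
My plan is to freeze the slowly-varying factors of the integrand on each piece $D_{b(\bf A)}$ and then apply a crude Cauchy--Schwarz. By the chain rule the weight factors as
\[
w_{\bf A*\bf B}(x)=w_{\bf A\#\bf B}\bigl(\gamma_{\bf a_k'}(x)\bigr)\cdot w_{\bf a_k'}(x),
\]
and for $x\in D_{b(\bf A)}$ the inner point $\gamma_{\bf a_k'}(x)$ lies in the cylinder $D_{\bf a_k}$, which has diameter $\asymp\tau$ by Lemma \ref{lem:ia gammaa}. Lemma \ref{lem:lip log'} (after passing to the $\delta$-th power of $|\cdot|_{\bb S^2}$) then yields a multiplicative estimate $w_{\bf A\#\bf B}(\gamma_{\bf a_k'}(x))=W_{\bf A\bf B}(1+O(\tau))$ with $W_{\bf A\bf B}:=w_{\bf A\#\bf B}(x_{\bf a_k})$. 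In parallel, the $C^1$ bound on $g$ combined with $\dia(D_{\bf A*\bf B})\asymp\tau^{2k+1}$ lets me replace $g\circ\gamma_{\bf A*\bf B}$ on $D_{b(\bf A)}$ by an $(\bf A,\bf B)$-dependent constant $G_{\bf A\bf B}$ of modulus $\leq M'$, at a pointwise cost of $O(\tau^{2k+1})$.

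Inserting these two approximations into $I_{\bf A\bf B}=\int_{D_{b(\bf A)}}f(\gamma_{\bf A*\bf B}(x))\,w_{\bf A*\bf B}(x)\,d\mu(x)$ and summing gives
\[
\int_{\Lambda_\Gamma}f\,d\mu=\sum_{\bf A\lar\bf B}W_{\bf A\bf B}G_{\bf A\bf B}\tilde I_{\bf A\bf B}+\mathcal E,
\]
where $\tilde I_{\bf A\bf B}$ denotes the oscillatory integral on the right-hand side of (\ref{os1}). The combined error satisfies $|\mathcal E|\leq O(\tau)\sum_{\bf A\lar\bf B}W_{\bf A\bf B}\int_{D_{b(\bf A)}}w_{\bf a_k'}\,d\mu$, which, by running the same approximation in reverse, is comparable to $O(\tau)\int\lt_{Z(\tau)}^{2k+1}(\B)\,d\mu=O(\tau)$. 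Squaring the resulting estimate contributes the desired $C\tau^2$ term.

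For the main term I exploit three quantitative inputs: $|W_{\bf A\bf B}|\leq C\tau^{2k\delta}$ and $|G_{\bf A\bf B}|\leq M'$, together with the counting bound $\#\{(\bf A,\bf B):\bf A\lar\bf B\}\leq C\tau^{-(2k+1)\delta}$, all consequences of Lemma \ref{lem:ia gammaa} and the definition of $Z(\tau)$. A trivial Cauchy--Schwarz
\[
\Bigl|\sum_{\bf A\lar\bf B}W_{\bf A\bf B}G_{\bf A\bf B}\tilde I_{\bf A\bf B}\Bigr|^{2}\leq\#\{\bf A\lar\bf B\}\cdot\max_{\bf A,\bf B}|W_{\bf A\bf B}G_{\bf A\bf B}|^{2}\cdot\sum_{\bf A\lar\bf B}|\tilde I_{\bf A\bf B}|^{2}
\]
then produces exactly the prefactor $\tau^{4k\delta-(2k+1)\delta}=\tau^{(2k-1)\delta}$ appearing in (\ref{os1}).

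The one delicate step is verifying the Lipschitz approximation uniformly in $(\bf A,\bf B)$: applying Lemma \ref{lem:lip log'} to $\gamma_{\bf A\#\bf B}$ on the disc $D_{\bf a_k}$ requires checking that the points $\gamma_{\bf a_k'}(x)$ stay at distance $>\epsilon_0$ from $D_{\ov{\bf A\#\bf B}}$, a uniform separation which follows from the admissibility condition $\bf A\lar\bf B$. This is the precise analogue of a step in \cite[Lemma 3.4]{BD1} and has to be traced through our non-classical setup, where cylinders are only topological discs; once this geometric input is in hand, all remaining estimates are routine bookkeeping using $\S\ref{distortion-estimates}$.
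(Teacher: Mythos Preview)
Your proposal is correct and follows essentially the same approach as the paper: freeze the weight $w_{\bf A\#\bf B}$ at $x_{\bf a_k}$ and the amplitude $g$ at a reference point using the Lipschitz estimate from Lemma~\ref{lem:lip log'}, collect an $O(\tau)$ error, then apply Cauchy--Schwarz over the index set $\{(\bf A,\bf B):\bf A\lar\bf B\}$ with the size bounds $|W_{\bf A\bf B}|\leq C\tau^{2k\delta}$ and $\#\{\bf A\lar\bf B\}\leq C\tau^{-(2k+1)\delta}$. The only cosmetic difference is that the paper freezes $g$ at $x_{\bf a_0}$ (depending only on $\bf A$) with error $O(\tau)$, whereas you use a $(\bf A,\bf B)$-dependent constant with the sharper error $O(\tau^{2k+1})$; this is harmless since the weight error already dominates.
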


\begin{proof}
It follows from Lemma \ref{lem:ia gammaa} that for each $\bf a=a_1\ldots a_n\in Z(\tau)$, we have
\begin{equation}
\label{es1}
C^{-1}\tau^{\delta}\leq w_{\bf a'}(x)\leq C\tau^{\delta}\,\,\,\text{for}\,\,\, x\in D_{a_n}.
\end{equation}
This yields, using chain rule,
\begin{align}
\label{es2}
&C^{-1}\tau^{2k\delta}\leq w_{\bf A \# \bf B}(\gamma_{\bf a_k'}(x))\leq C\tau^{2k\delta},\\
\label{es3}
&C^{-1}\tau^{2k\delta}\leq w_{\bf A \# \bf B}(x_{\bf a_k})\leq C\tau^{2k\delta}.
\end{align}
Meanwhile, using Lemma \ref{lem:lip log'} and \ref{elementary}, we deduce that
\begin{equation}
\label{es}
\exp(-C\tau)\leq \left|\frac{w_{\bf A \# \bf B} (\gamma_{\bf a_k'}(x))}{w_{\bf A \# \bf B}(x_{\bf a_k})}\right|\leq \exp (C\tau).
\end{equation}
Observe that $|g(\gamma_{\bf A * \bf B}(x))-g(x_{\bf a_0})|\leq C \tau$. Combining this with (\ref{es1})-(\ref{es}), we obtain
\begin{equation*}
\left \lvert \int_{\Lambda_{\Gamma}} f d\mu -\sum_{\bf A,\bf B: \bf A\leftrightarrow \bf B} \int_{D_{b(\bf A)}} e^{i t \varphi (\gamma_{\bf A *\bf B}(x))} g(x_{\bf a_0})w_{\bf A \# \bf B}(x_{\bf a_k}) w_{\bf a_k'}(x)d\mu\right \lvert\leq C\tau.
\end{equation*}
Using Schwarz's inequality and (\ref{es3}), we get
\begin{align*}
&\left\lvert  \sum_{\bf A,\bf B: \bf A\leftrightarrow \bf B} \int_{D_{b(\bf A)}} e^{i t \varphi(\gamma_{\bf A *\bf B}(x))} g(x_{\bf a_0}) w_{\bf A \#\bf B}(x_{\bf a_k}) w_{\bf a_k'}(x)d\mu(x)\right\lvert^2\\
&\leq C\tau^{(2k-1)\delta} \sum_{\bf A,\bf B:\bf A\leftrightarrow \bf B} \left\lvert\int_{D_{b(\bf A)}} e^{i t\varphi (\gamma_{\bf A *\bf B}(x))}w_{\bf a_k'}(x)d\mu(x) \right\lvert^2,
\end{align*}
completing the proof of the lemma.
\end{proof}

To handle the first term on the right-hand side of (\ref{os1}), we estimate using (\ref{es1})
\begin{equation}\label{es5}
\begin{split}
&\sum_{\bf A,\bf B:\bf A\leftrightarrow \bf B}\left\lvert\int_{D_{b(\bf A)}} e^{i t\varphi (\gamma_{\bf A *\bf B}(x))}w_{\bf a_k'}(x)d\mu(x) \right\lvert^2\\
=&\sum_{\bf A} \int_{D_{b(\bf A)}^2} w_{\bf a_k'}(x) w_{\bf a_k'}(y)\sum_{\bf B:\bf A\leftrightarrow \bf B} e^{i t(\varphi(\gamma_{\bf A* \bf B}(x))-\varphi(\gamma_{\bf A *\bf B}(y)))}d\mu(x)d\mu(y)\\
\leq& C \tau^{2\delta} \sum_{\bf A} \int_{D_{b(\bf A)}^2} \left \lvert \sum_{\bf B:\bf A\leftrightarrow \bf B} e^{i t(\varphi(\gamma_{\bf A* \bf B}(x))-\varphi(\gamma_{\bf A *\bf B}(y)))} \right\lvert d\mu(x) d\mu(y).
\end{split}
\end{equation}

With Lemma \ref{lem:lip log'} available, the proof of the following lemma is almost the same as in \cite[Lemma 3.5]{BD1}.
\begin{lem}
\label{lem:os 2}
Denote
\begin{equation*}
J_{\tau}:=\{\eta\in \C|\, |\eta|\in[\tau^{-1/8},C\tau^{-1/2}] \}
\end{equation*}
where $C$ is sufficiently large. Then
\begin{equation}
\label{com-sum}
\left\lvert \int_{\Lambda_{\Gamma}} f d\mu  \right\lvert^2 \leq C\tau^{(2k+1)\delta} \sum_{\bf A} \sup_{\eta\in J_{\tau}} \left\lvert  \sum_{\bf B: \bf A\leftrightarrow \bf B} e^{2\pi i\Re(\eta \ze\cdots \zee )}   \right\lvert+C\tau^{\delta/4}.
\end{equation}
\end{lem}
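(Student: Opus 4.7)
The plan is to adapt the scheme of \cite[Lemma 3.5]{BD1} to the complex setting, using Lemma \ref{lem:lip log'} in place of its real analog. Starting from Lemma \ref{lem:os1} combined with \eqref{es5}, it suffices to control
\begin{equation*}
\tau^{(2k+1)\delta}\sum_{\bf A}\int_{D_{b(\bf A)}^2}\Big|\sum_{\bf B:\bf A\lar\bf B}e^{it(\varphi(\gamma_{\bf A*\bf B}(x))-\varphi(\gamma_{\bf A*\bf B}(y)))}\Big|\dd\mu(x)\dd\mu(y)
\end{equation*}
up to an additive error absorbed into $C\tau^{\delta/4}$. I would split the $(x,y)$-integration at the threshold $|x-y|=\tau^{1/4}$. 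On the small region $\{|x-y|\leq\tau^{1/4}\}$, bound the inner $\bf B$-sum trivially by $\#\{\bf B:\bf A\lar\bf B\}\lesssim\tau^{-k\delta}$ and invoke the Frostman-type estimate \ref{ps-est}, which yields $\int_{|x-y|\leq\tau^{1/4}}\dd\mu\dd\mu\lesssim\tau^{\delta/4}$; since $\#\{\bf A\}\lesssim\tau^{-(k+1)\delta}$, the prefactor $\tau^{(2k+1)\delta}$ collapses this region to $O(\tau^{\delta/4})$.

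The core of the argument is the large region $\{|x-y|>\tau^{1/4}\}$, where I linearize the phase into the form $2\pi\Re(\eta\,\ze\cdots\zee)$. First, Taylor-expand
\begin{equation*}
\varphi(\gamma_{\bf A*\bf B}(x))-\varphi(\gamma_{\bf A*\bf B}(y))=\Re\bigl(\bar w(\gamma_{\bf A*\bf B}(x)-\gamma_{\bf A*\bf B}(y))\bigr)+O\bigl(|\gamma_{\bf A*\bf B}(x)-\gamma_{\bf A*\bf B}(y)|^2\bigr),
\end{equation*}
where $w:=\partial_x\varphi+i\partial_y\varphi$ is evaluated near $\gamma_{\bf A*\bf B}(y)$; since $|\gamma_{\bf A*\bf B}(x)-\gamma_{\bf A*\bf B}(y)|\lesssim\tau^{2k+1}$, the quadratic remainder times $|t|=\tau^{-2k-3/2}$ is of size $\lesssim\tau^{2k+1/2}$, hence negligible. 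Applying the M\"obius identity $\gamma(u)-\gamma(v)=(u-v)/((cu+d)(cv+d))$ to each piece of the factorization $\gamma_{\bf A*\bf B}=\gamma_{\bf a_0'\bf b_1'}\circ\gamma_{\bf a_1'\bf b_2'}\circ\cdots\circ\gamma_{\bf a_{k-1}'\bf b_k'}\circ\gamma_{\bf a_k'}$ yields
\begin{equation*}
\gamma_{\bf A*\bf B}(x)-\gamma_{\bf A*\bf B}(y)=(x-y)\,H_{\bf a_k'}(x,y)\prod_{j=1}^k H_{\bf a_{j-1}'\bf b_j'}(u_j^x,u_j^y),
\end{equation*}
where $H_\gamma(u,v):=((cu+d)(cv+d))^{-1}$ and $u_j^x,u_j^y\in D_{\bf a_j}$ are the images of $x,y$ under the tail composition. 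Lemma \ref{lem:lip log'} (whose hypothesis $d(\cdot,D_{\ov{\bf a}})>\epsilon_0$ is ensured by the Schottky ping-pong separation of $D_{\bf a_j}$ from $D_{\ov{\bf a_{j-1}'\bf b_j'}}$) lets me replace each $H_{\bf a_{j-1}'\bf b_j'}(u_j^x,u_j^y)$ by $\gamma'_{\bf a_{j-1}'\bf b_j'}(x_{\bf a_j})=\tau^2\zeta_{j,\bf A}(\bf b_j)$ modulo a multiplicative error $1+O(\tau)$. Setting
\begin{equation*}
\eta:=\frac{t\,\bar w\,(x-y)\,H_{\bf a_k'}(x,y)\,\tau^{2k}}{2\pi},
\end{equation*}
the estimate $|H_{\bf a_k'}(x,y)|\approx\tau$ from Lemmas \ref{lem:lip derivative}--\ref{lem:ia gammaa} gives $|\eta|\approx|x-y|\tau^{-1/2}\in[\tau^{-1/4},C\tau^{-1/2}]\subset J_\tau$, and the original phase equals $2\pi\Re(\eta\,\ze\cdots\zee)+O(\tau^{1/2})$.

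Finally, taking the modulus of the inner sum, pulling out $\sup_{\eta\in J_\tau}$, and observing that the $O(\tau^{1/2})$ phase error contributes an additive loss of $\lesssim\tau^{(2k+1)\delta}\cdot\tau^{-(k+1)\delta}\cdot\tau^{-k\delta}\cdot\tau^{1/2}=\tau^{1/2}\leq\tau^{\delta/4}$, produces \eqref{com-sum}. The main obstacle I anticipate is the Lipschitz replacement step: because derivatives in $\mathrm{PSL}_2(\C)$ are complex, substituting $H_{\bf a_{j-1}'\bf b_j'}(u_j^x,u_j^y)$ by $\gamma'_{\bf a_{j-1}'\bf b_j'}(x_{\bf a_j})$ must preserve \emph{both} modulus and argument uniformly, which is precisely what Lemma \ref{lem:lip log'} delivers (it controls $\gamma'_{\bf a}z-\gamma'_{\bf a}w$, not merely $|\gamma'_{\bf a}z|/|\gamma'_{\bf a}w|$). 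The remaining delicate point is the combinatorial verification that every intermediate evaluation point $u_j^x,u_j^y$ lies at distance $>\epsilon_0$ from the excluded disc $D_{\ov{\bf a_{j-1}'\bf b_j'}}$, which is where the ping-pong conditions $\bf a_{j-1}\qar\bf b_j\qar\bf a_j$ enter decisively.
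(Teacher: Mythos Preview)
Your approach is essentially the paper's: both reduce to Lemma \ref{lem:os1} and \eqref{es5}, split the $(x,y)$-integral at scale $\tau^{1/4}$, handle the near-diagonal by Lemma \ref{ps-est}, and on the far region linearize the phase into $2\pi\Re(\eta\,\ze\cdots\zee)$. The only stylistic difference is that you use the exact M\"obius difference identity together with a first-order Taylor expansion of $\varphi$, whereas the paper writes $\varphi(\gamma_{\bf A*\bf B}(x))-\varphi(\gamma_{\bf A*\bf B}(y))$ as a line integral over the segment $[\tilde y,\tilde x]\subset D_{\bf a_k}$ and freezes each factor of the integrand at the fixed points $x_{\bf a_j}$; both routes yield \eqref{es6} with the same $O(\tau^{2k+2})$ error.

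There is one point that needs correction. As written, your $w=\partial_x\varphi+i\partial_y\varphi$ is evaluated ``near $\gamma_{\bf A*\bf B}(y)$'', hence depends on $\bf B$; this makes your $\eta$ depend on $\bf B$, and then you cannot pull $\sup_{\eta\in J_\tau}$ outside the $\bf B$-sum. The fix is exactly what the paper does: freeze $\bar w$ (which equals the paper's $\varphi'$) at the $\bf B$-independent point $x_{\bf a_0}$. Since $|\gamma_{\bf A*\bf B}(y)-x_{\bf a_0}|\lesssim\tau$ and $\varphi\in C^2$, this adds a phase error of order $|t|\cdot\tau\cdot\tau^{2k+1}=\tau^{1/2}$, which your final budget already absorbs. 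A second minor omission: your lower bound $|\eta|\gtrsim\tau^{-1/4}$ tacitly treats $|\bar w|\approx 1$; this is precisely where the hypothesis $M=\inf|\nabla\varphi|>0$ enters, and the paper explicitly records the condition $M\geq\tau^{1/8}$ to force $|\eta|\gtrsim M\tau^{-1/4}\geq\tau^{-1/8}$, i.e.\ $\eta\in J_\tau$.
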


\begin{proof}
Fix $\bf A$. Take $x,y\in D_{b(\bf A)}$ and put
\begin{equation*}
\tilde{x}:=\gamma_{\bf a_k'}(x),\,\,\, \tilde{y}:=\gamma_{\bf a_k'}(y)\in D_{\bf a_k},\,\,\,  v:=\frac{\tilde{x}-\tilde{y}}{|\tilde{x}-\tilde{y}|}.
\end{equation*}

Assume that $\bf A \leftrightarrow \bf B$. Note that $\varphi$ is real valued function defined on a neighborhood of $\Lambda_\Gamma$. For $z=x+iy$, we use the notation 
$$\varphi'(z)=\partial_x\varphi(z)-i\partial_y\varphi(z),$$ 
so that we can write
$$D_z\varphi (w)=\partial_x\varphi (z) w_1+\partial_y\varphi (z)w_2=\Re(\varphi'(z) w),$$ for $w=w_1+iw_2$. Since $\gamma_{\bf A *\bf B}(x)=\gamma_{\bf A \# \bf B}(\tilde{x})$, $\gamma_{\bf A * \bf B}(y)=\gamma_{\bf A \# \bf B}(\tilde{y})$, we 
 have
\begin{align*}
\varphi(\gamma_{\bf A * \bf B} (x))-\varphi (\gamma_{\bf A *\bf B}(y))=&\int_{0}^{|\tilde{x}-\tilde{y}|} (\varphi\circ \gamma_{\bf A \# \bf B}\circ p)'(s)ds \\
=&\int_{0}^{|\tilde{x}-\tilde{y}|} \Re \left(\varphi '(\gamma_{\bf A\# \bf B}\circ p(s)) \gamma_{\bf A \# \bf B}'(p(s)) v \right)ds,
\end{align*}
where $p:[0,\,|\tilde{x}-\tilde{y}|]\to \mathbb{C}$ is the path defined by $s\mapsto \tilde{y}+vs$.

Observe that, by Lemma  \ref{lem:lip log'} and \ref{elementary}, we have 
\begin{align*}
&|x_{\bf a_k}-p(s)|\leq \CG \tau\,\,\,\text{for any}\,\,\,s\in [0,|\tilde{x}-\tilde{y}|],\\
&|\gamma_{\bf a_{j}'\bf b_{j+1}'\ldots \bf b_{k}'}(p(s))-x_{\bf a_j}|\leq \CG \tau\,\,\,\text{for any}\,\,\,0\leq j\leq k-1\,\,\,\text{and}\,\,\,s\in [0,|\tilde{x}-\tilde{y}|]. 
\end{align*}
These yield for any $s\in [0,|\tilde{x}-\tilde{y}|]$
\begin{equation*}
|\varphi'(x_{\bf a_0})-\varphi'(\gamma_{\bf A\# \bf B}(p(s)))|\leq C \tau.
\end{equation*}
Hence we get
\begin{equation*}
\left\lvert  (\varphi \circ \gamma_{\bf A \# \bf B}\circ p)'(s)-\tau^{2k}\Re\left( \varphi'(x_{\bf a_0})\ze\cdots \zee v\right) \right\lvert\leq C \tau^{2k+1}.
\end{equation*}
It follows that
\begin{equation}\label{es6}
\left \lvert  \varphi(\gamma_{\bf A * \bf B}(y))-\varphi(\gamma_{\bf A *\bf B}(x))-\tau^{2k} \Re\left(\varphi'(x_{\bf a_0}) \ze\cdots \zee (\tilde{y}-\tilde{x})\right)\right\lvert\leq C \tau^{2k+2},
\end{equation}
where we recall that $\zeta_{j,\bf A}(\bf b):=\tau^{-2} \gamma'_{\bf a_{j-1}'\bf b'}(x_{\bf a_j})$.
Denote 
\[\eta:=\frac{\tau^{-3/2}}{2\pi}\varphi'(x_{\bf a_0})\cdot (\tilde{x}-\tilde{y})\cdot\operatorname{sign}(t). \]
Then by Lemma \ref{elementary} and \ref{lem:ia gammaa},
\[ C^{-1}M\tau^{-1/2}|x-y|\leq |\eta|\leq C\tau^{-1/2}|x-y|. \]
Notice that we have used here that on a neighborhood of $\Lambda_\Gamma$, 
$$M:=\inf_{z\in\mathcal{U}}\vert \varphi'(z) \vert =\inf_{z\in\mathcal{U}}\vert \nabla_z \varphi \vert>0.$$
Recall that $|t|=\tau^{-2k-3/2}$. By Lemma \ref{lem:os1}, \eqref{es5} and \eqref{es6}, we have
\[\left|\int_{\Lambda_\Gamma}f\dd\mu \right|^2\leq C\tau^{(2k+1)\delta} \sum_{\bf A} \int_{D_{b(\bf A)}^2} \left\lvert  \sum_{\bf B: \bf A\leftrightarrow \bf B} e^{2\pi i\Re(\eta \ze\cdots \zee) }   \right\lvert\dd\mu(x)\dd\mu(y)+C\tau^{1/2}. \]
By Lemma  \ref{ps-est},  for a fixed $C_0$
\[\mu\times\mu\{(x,y)\in\Lambda_\Gamma^2:|x-y|\leq C_0\tau^{1/4} \}\leq C\tau^{\delta/4}. \]
We therefore take the double integral over the set of $x,y$ such that $|x-y|\geq C_0\tau^{1/4}$, which assuming that 
$$M\geq \tau^{1/8}$$
implies for a large $C_0$ that $\eta\in J_{\tau}$. This finishes the proof.
\end{proof}

 \subsection{End of the proof of Theorem \ref{main1}}
 We will apply Proposition \ref{thm:sum product} to suitably defined discrete measures $\lambda_j$'s (see below) to estimate the sum in Lemma \ref{lem:os 2} and hence finish the proof of Theorem \ref{main1}.  The following technical proposition verifies that 
 these $\lambda_j$'s satisfy the required projective non-concentration property in  (\ref{equ:non concentration}).
 
 For any $\bf a, \bf b\in Z(\tau)$ and $x\in \mathbb{C}$ with $\bf a\wa \bf b \wa x$, write 
 \begin{equation*}
 \zeta_{\bf a,x}(\bf b):=\tau^{-2}\gamma'_{\bf a'\bf b'}(x).
  \end{equation*}
 \begin{prop}\label{thm:no concentration}
	 Assuming that $\Gamma$ is Zariski dense, there exist $C>0, \kappa>0$ with $\kappa$ depending only on the Patterson-Sullivan measure $\mu$ such that for any $\bf a\in Z(\tau)$, $x\in \mathbb{C}$ and $\sigma\in (\tau^{1/2},1)$, we have 
	 \[\sup_{a\in \mathbb{R},\,\theta\in \mathbb{R}}\tau^{\delta}\#\{\bf b\in Z(\tau)\ | \ \bf a\rightsquigarrow\bf b\rightsquigarrow x,\, |\Re(e^{i\theta}\zeta_{\bf a,x}(\bf b))-a|\leq \sigma \}\leq C \sigma^{\kappa} .\]
\end{prop}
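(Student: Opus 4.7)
The plan is to convert the cardinality bound into a non-concentration estimate for the Patterson-Sullivan measure $\mu$, and then derive that estimate from Guivarc'h-type regularity of the stationary measure of an associated random walk on $\slc$.

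First, Lemma \ref{lem:ia gammaa} gives $\mu(D_{\bf b})\asymp_\Gamma\tau^\delta$ for every $\bf b\in Z(\tau)$, so $\tau^\delta$ times the cardinality in question is comparable to $\mu\bigl(\bigcup_{\bf b\text{ bad}}D_{\bf b}\bigr)$. Writing $\gamma_{\bf a'\bf b'}\simeq\begin{pmatrix}A&B\\ C&D\end{pmatrix}$, the distortion bounds (Lemmas \ref{lem:norm c}, \ref{lem:lip derivative}) give $|\zeta_{\bf a,x}(\bf b)|\asymp 1$, so the strip condition $|\Re(e^{i\theta}\zeta)-a|\leq\sigma$ localizes $\arg\zeta_{\bf a,x}(\bf b)$ to at most two arcs of total length $O(\sigma^{1/2})$ on the circle $\{|z|\asymp 1\}$ (the worst case being the tangency $|a|\approx|\zeta|$). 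Since $\zeta_{\bf a,x}(\bf b)=\tau^{-2}(C_{\bf a'\bf b'}x+D_{\bf a'\bf b'})^{-2}$,
\begin{equation*}
\arg\zeta_{\bf a,x}(\bf b)\equiv -2\arg C_{\bf a'\bf b'}-2\arg\bigl(x-p(\bf b)\bigr)\pmod{2\pi},\qquad p(\bf b):=\gamma_{\bf a'\bf b'}^{-1}(\infty)\in\widehat{\C},
\end{equation*}
and Lemma \ref{lem:lip log'} shows that replacing $x$ by any $y\in D_{\bf b}$ changes this expression by $O(\tau)\ll\sigma$. So the bad $\bf b$ are those for which the pair $(\arg C_{\bf a'\bf b'},p(\bf b))\in S^1\times\widehat{\C}$ lies in a tubular neighborhood of a specific codimension-one subvariety.

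Next, I view $\{\gamma_{\bf a'\bf b'}\}_{\bf b\in Z(\tau)}$ as a sample of $\slc$-matrices weighted by $\mu(D_{\bf b})\asymp\tau^\delta$. Since $p(\bf b)=\gamma_{\bar{\bf b'}\bar{\bf a'}}(\infty)$ lies in the cylinder $D_{\bar{\bf b'}\bar{\bf a'}}$ of $\mu$-measure $\asymp\tau^{2\delta}$, the pushforward of this weighted counting measure by $\bf b\mapsto p(\bf b)$ is comparable, at scales well above $\tau^{\delta}$ (which is satisfied because $\sigma^{1/2}\geq\tau^{1/4}\gg\tau^\delta$ for $\delta\in(0,2)$), to $\mu$ restricted to an appropriate region of $\widehat{\C}$. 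Modulo handling the $\arg C_{\bf a'\bf b'}$ factor by a representation-theoretic argument (below), the proposition then reduces to a bound of the form
\begin{equation*}
\mu\bigl(\{p\in\widehat{\C}:|\arg(x-p)-\vartheta|\leq c\sigma^{1/2}\}\bigr)\leq C\sigma^{\kappa},
\end{equation*}
uniform in $x\in\C$ and $\vartheta\in\R$---that is, $\mu$ does not concentrate on narrow angular sectors at any vertex, equivalently, on $\sigma^{1/2}$-neighborhoods of generalized circles through $x$.

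This non-concentration is where the main work lies. By the appendix, $\mu$ is realized as the stationary measure of a random walk on $\slc$ with Zariski dense support and finite exponential moment, so Guivarc'h's theorem \cite{Gui90} provides Hölder regularity of $\mu$ away from any fixed point of $\widehat{\C}$. The obstacle is upgrading a bound for a \emph{single} distinguished circle (e.g.\ $\mathrm{PSL}_2(\R)\cdot\R\subset\widehat{\C}$) to a \emph{uniform} bound on \emph{every} generalized circle, with a single exponent $\kappa$ depending only on $\mu$. This requires representation theory of $\mathrm{PSL}_2(\C)$: circles in $\widehat{\C}$ form the homogeneous space $\mathrm{PSL}_2(\C)/\mathrm{PSL}_2(\R)$, and Zariski density of $\Gamma$ is precisely what prevents $\Gamma$ from preserving any circle---in the Fuchsian case $\Lambda_\Gamma$ itself lies on a circle, so the statement genuinely fails. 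Converting this qualitative Zariski-density statement into a uniform quantitative Hölder bound, by combining Guivarc'h regularity with the action of $\Gamma$ on sections of line bundles over $\mathrm{PSL}_2(\C)/\mathrm{PSL}_2(\R)$, is the principal difficulty, addressed in Section~4.
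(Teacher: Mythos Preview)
Your proposal correctly identifies the endgame---Guivarc'h regularity applied through a real representation of $\slc$, with Zariski density ensuring no invariant circle---but the reduction to that endgame has a genuine gap.

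The problem is the factor $\arg C_{\bf a'\bf b'}$. You write the strip condition as a constraint on the pair $(\arg C_{\bf a'\bf b'},\,p(\bf b))$ lying near a codimension-one subvariety of $S^1\times\widehat{\C}$, then say you will handle $\arg C$ ``by a representation-theoretic argument (below)'' and reduce to $\mu$-non-concentration of $p(\bf b)$ on angular sectors. But there is no ``below'': both $\arg C_{\bf a'\bf b'}$ and $p(\bf b)=-D_{\bf a'\bf b'}/C_{\bf a'\bf b'}$ vary with $\bf b$ and are correlated, so you cannot freeze one and apply a measure bound to the other. Fixing $\arg C$ in a small arc and summing over arcs just trades one non-concentration problem for another of the same type. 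Moreover, your claim that the pushforward of the weighted counting measure under $\bf b\mapsto p(\bf b)$ is comparable to $\mu$ is not justified: the points $p(\bf b)$ sit in cylinders $D_{\overline{\bf b'}\,\overline{\bf a'}}$ of $\mu$-measure $\asymp\tau^{2\delta}$, but there are only $\asymp\tau^{-\delta}$ words $\bf b$, so these points sample a set of total $\mu$-measure $\asymp\tau^{\delta}$, not $1$.

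The paper circumvents both issues by a different reduction. First, it eliminates the affine line $(a,\theta)$ altogether via a determinant trick (Lemma~\ref{linear algebra}): if many $\zeta$'s lie near a line, then many \emph{triples} of $\zeta$'s span a small-area triangle, and this area condition is line-independent. To create independent triples, one splits $\bf b=\bf b_1'\bf b_2$ with $\bf b_1\in Z(\tau^{1/2})$ and applies H\"older in the $\bf b_1$-variable; this yields the triple-counting Proposition~\ref{lem:WNC}. Second, the determinant $\det(\gamma_{\bf a'\bf c'},\gamma_{\bf a'\bf d'},\gamma_{\bf a'\bf e'},z)$ is, after clearing denominators, a real polynomial in the \emph{evaluation point} $z=\gamma_{\bf b_2'}x$, with coefficients depending on the outer words $\bf c,\bf d,\bf e$. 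Guivarc'h regularity, transported to $\bb P V_n^*$ via the equivariant map $e_n$ of \S\ref{real-rep}, gives a uniform bound on the $\mu$-measure of small values of any real polynomial (Lemma~\ref{lem:strong PS}). The remaining work is to show the polynomial has large height for \emph{most} triples $(\bf c,\bf d,\bf e)$, which is done by a second application of the same polynomial estimate (Proposition~\ref{BD} and Lemmas~\ref{lem:non real}, \ref{lem:non proj}). The splitting $\bf b=\bf b_1'\bf b_2$ is what makes the $\arg C$ difficulty disappear: the inner variable $\bf b_2$ becomes a genuine $\mu$-distributed evaluation point, while the outer variables enter only through polynomial coefficients.
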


Let us show how this proposition implies the main result. For each $\bf{A}\in Z(\tau)^{k+1}$ and for $1\leq j\leq k$, we define the following measure on $\mathbb{C}$:
\begin{equation}\label{equ:lambda j}
\lambda_j(E):=\tau^{\delta}\#\left\{\bf b\in Z(\tau)\,: \,\zeta_{j,\bf A}(\bf b)\in E \right\} \,\,\,\text{for any Borel set}\,\,\,E\subset \mathbb{C}.
\end{equation}
Notice that by Lemma \ref{lem:ia gammaa}, the chain rule, and the very definition of $Z(\tau)$, we know that the rescaled derivatives  $\zeta_{j,\bf A}(\bf b)$ satisfy uniformly
$$C_0^{-1}\leq \vert\zeta_{j,\bf A}(\bf b) \vert \leq C_0,$$
for some $C_0>0$, and $C_0$ can definitely be taken large enough so that the total mass of each $\lambda_j$ is less than $C_0$. Now recall that the constant $k$ in Proposition \ref{thm:sum product}, is determined by $\kappa$ from Proposition \ref{thm:no concentration}. Moreover we have:
\begin{itemize}
 \item $\vert t \vert=\tau^{-2k-3/2}$ and $\vert t \vert$ is taken large.
 \item $\vert \eta \vert \in [\tau^{-1/8}, C\tau^{-1/2}]$.
\end{itemize}
Therefore for each $\sigma \in [C_0\vert \eta \vert^{-1},C_0^{-1} \vert \eta\vert^{-\epsilon}]$, we get that 
$\sigma \in [C_0C^{-1}\tau^{1/2}, C_0^{-1} \tau^{\epsilon/8}].$ Taking again $C_0>0$ large enough so that $C_0C^{-1}> 1$, we can make sure that
$\sigma \in (\tau^{1/2},1)$ in order to apply Proposition \ref{thm:no concentration}. Hypothesis (\ref{equ:non concentration}) from Proposition \ref{thm:sum product} is now satisfied, we can combine it with Lemma \ref{lem:os 2} to obtain
$$\left\lvert \int_{\Lambda_{\Gamma}} e^{it\varphi}gd\mu  \right \lvert^2 \leq C\vert \eta \vert^{-\epsilon}+C\tau^{\delta/4}=O(\tau^{\epsilon/8})+O(\tau^{\delta/4})=O(\vert t \vert^{-\tilde{\epsilon}}),$$
with $\tilde{\epsilon}=\min\{\frac{\delta}{4(2k+3/2)}, \frac{\epsilon}{8(2k+3/2)} \}$. The proof is done.

\bigskip
We point out that this "non-concentration" result is really where the Zariski density hypothesis will be used and where our techniques deviate completely from the elementary arguments used in \cite{BD1}. Section $\S$\ref{non concentration} is fully devoted to the proof of this Proposition  \ref{thm:no concentration}.

 \section{Proving the non-concentration property}
  \label{non concentration}
  
   We prove Proposition \ref{thm:no concentration} in this section. Here is an overview of the strategy. Roughly speaking, we want to count the elements in $Z(\tau)$ whose derivatives lie in a neighborhood of a given affine line. We use the H\"{o}lder's inequality and reduce the problem to counting triples of elements whose derivatives are close to an affine line. A key observation is that the area of the triangle formed by such a triple must then be small (Lemma \ref{linear algebra}). The area (or determinant) condition enables us to obtain the desired supremum statement and hence Proposition \ref{lem:WNC} will lead to Proposition \ref{thm:no concentration}. In $\S$ \ref{real-rep}, we discuss real polynomials (defined by (\ref{real polynomial})) which are related to the determinant in  Proposition \ref{lem:WNC}. We establish an estimate regarding the measure of small values of a real polynomial (Lemma \ref{lem:strong PS}). Real proximal representations of $\operatorname{SL}_2(\mathbb{C})$ and Guivarc'h regularity property will naturally come into the picture. The last two subsections are about using Lemma \ref{lem:strong PS} to obtain Proposition \ref{lem:WNC}.  
   
 \subsection{Proof of Proposition \ref{thm:no concentration}}
 In the rest of the paper, given two real functions $f$ and $g$, we write $f\ll g$ if there exists a constant $C_1$ only depending on $C_\Gamma$ such that $f\leq C_1g$. We write $f\approx g$ if $f\ll g$ and $g\ll f$. Recall that $\mu$ is the Patterson-Sullivan measure of $\Gamma$ on the extended complex plane $\hat{\mathbb{C}}$. 
 
 Proposition \ref{thm:no concentration} follows from the following proposition. 
 \begin{prop}\label{lem:WNC}
	There exist $\epsilon=\epsilon(\mu)>0,\,N>0$ and $C>0$ such that for any $\bf a\in \cal W$, $\tau,\tau_1>0$, $1/N>\sigma>\tau,\tau_1$ and $z_o\in\C $ 
\begin{align}
\label{equ:PNC}
\# \big\{&(\bf b, \bf c, \bf d)\in Z(\tau)^3,\bf e\in Z(\tau_1)\ | \ \bf a \wa \begin{matrix} \bf b \\\bf c\\ \bf d
\end{matrix}\wa \bf e\wa z_o, \nonumber\\
& |\det(\gamma_{\bf a'\bf b'},\gamma_{\bf a'\bf c'},\gamma_{\bf a'\bf d'},\gamma_{\bf e'}z_o)|\leq \|\gamma_{\bf a}\|^{-4}\tau^2\sigma\big\}
\leq C\tau^{-3\delta}\tau_1^{-\delta} \sigma^{\epsilon},
\end{align}
where $\det(\gamma_1,\gamma_2,\gamma_3,z)$ is defined to be 
\begin{equation}
\label{det_mat}
	\det(\gamma_1,\gamma_2,\gamma_3,z)=\det\begin{pmatrix}
		\Re \gamma_1'z & \Im\gamma_1'z & 1\\
		\Re \gamma_2'z & \Im\gamma_2'z & 1\\
		\Re \gamma_3'z & \Im\gamma_3'z & 1
	\end{pmatrix}.
\end{equation}
\end{prop}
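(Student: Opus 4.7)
My plan is to recast the determinant condition as a quantitative near-collinearity statement for three complex numbers and then extract the non-concentration from Guivarc'h-type regularity of the Patterson-Sullivan measure, as supplied by Lemma \ref{bq-reg}. By the chain rule and the size estimates of Lemmas \ref{lem:lip derivative}--\ref{lem:ia gammaa}, for any $\mathbf{b}\in Z(\tau)$ with $\mathbf{a}\wa\mathbf{b}\wa w:=\gamma_{\mathbf{e}'}(z_o)$ one has
\begin{equation*}
\gamma_{\mathbf{a}'\mathbf{b}'}'(w)=\gamma_{\mathbf{a}'}'(\gamma_{\mathbf{b}'}w)\cdot \gamma_{\mathbf{b}'}'(w),\qquad |\gamma_{\mathbf{a}'\mathbf{b}'}'(w)|\asymp \|\gamma_{\mathbf{a}}\|^{-2}\tau.
\end{equation*}
After splitting $(\mathbf{b},\mathbf{c},\mathbf{d})$ according to the (finitely many) choices of common first letter --- which forces $\gamma_{\mathbf{b}'}w,\gamma_{\mathbf{c}'}w,\gamma_{\mathbf{d}'}w$ into a common disc $D_{b_1}$ on which the conformal map $\gamma_{\mathbf{a}'}$ has bounded distortion --- the factor $|\gamma_{\mathbf{a}'}'|^2\asymp \|\gamma_{\mathbf{a}}\|^{-4}$ pulls out of the determinant (\ref{det_mat}) up to a bounded multiplicative constant. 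Hence the hypothesis of (\ref{equ:PNC}) becomes a $\sigma$-near collinearity of the three rescaled complex numbers $u_{\mathbf{x}}:=\tau^{-1}\gamma_{\mathbf{x}'}'(w)$, each of modulus $\asymp 1$. The outer sum over $\mathbf{e}$ is trivially bounded by $\#Z(\tau_1)\asymp \tau_1^{-\delta}$, so I may fix $w$ throughout what follows.

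Second, I recast the counting as a measure estimate. With the atomic measure
\begin{equation*}
\nu_{\tau}:=\tau^{\delta}\sum_{\substack{\mathbf{b}\in Z(\tau)\\ \mathbf{a}\wa\mathbf{b}\wa w}}\delta_{u_{\mathbf{b}}}
\end{equation*}
of total mass $\asymp 1$ (by Lemmas \ref{lem-pa-ch} and \ref{concatenation}), the target reduces to
\begin{equation*}
\nu_{\tau}^{\otimes 3}\bigl\{(u_1,u_2,u_3)\in \mathbb{C}^3:\,|P(u_1,u_2,u_3)|\leq \sigma\bigr\}\lesssim \sigma^{\epsilon},
\end{equation*}
where $P$ is the real polynomial given by the $3\times 3$ determinant in (\ref{det_mat}). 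Since $P(\,\cdot\,,u_2,u_3)$ is real-affine in $u_1\in\mathbb{R}^2$ with gradient proportional to the unit normal of the line through $u_2$ and $u_3$, its sub-level set is a strip of width $\asymp \sigma/|u_2-u_3|$ whose direction depends on $(u_2,u_3)$.

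The essential input is Guivarc'h regularity (Lemma \ref{bq-reg}): there exists $\kappa_8>0$ such that
\begin{equation*}
\sup_{a\in \mathbb{R},\,\theta\in \mathbb{R}}\mu\bigl\{z\in \Lambda_{\Gamma}:\,|\Re(e^{i\theta}z)-a|\leq \rho\bigr\}\lesssim \rho^{\kappa_8}.
\end{equation*}
I will obtain this from the theory of stationary measures on projective spaces applied to a suitable real proximal representation of $\mathrm{SL}_2(\mathbb{C})$, combined with the appendix's identification of $\mu$ as a stationary measure with finite exponential moment. Transferring this regularity from $\mu$ to $\nu_{\tau}$ via the orbital map $\mathbf{b}\mapsto \gamma_{\mathbf{b}'}(w)$ (using Lemmas \ref{lem:lip derivative} and \ref{lem:ia gammaa}) costs at most a loss of size $\tau^{\kappa_8}\leq \sigma^{\kappa_8}$ from the discretization at scale $\tau$. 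Applying the strip bound to the $u_1$-slice and then integrating in $(u_2,u_3)$ against $\nu_{\tau}^{\otimes 2}$, with a second use of the strip bound to control the near-coincidence event $\{|u_2-u_3|\leq \rho\}$, produces the desired $\sigma^{\epsilon}$ for some explicit $\epsilon=\epsilon(\kappa_8,\delta)>0$.

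The main obstacle lies in establishing Guivarc'h regularity \emph{uniformly} in the rotation parameter $\theta$, which is not accessible by elementary curvature-type arguments in the non-Fuchsian setting and is precisely the reason representation theory enters the picture: one needs a real proximal $\rho:\mathrm{SL}_2(\mathbb{C})\to \mathrm{SL}(V)$ whose boundary map conjugates the family of rotations $e^{i\theta}$ on $\widehat{\mathbb{C}}$ with the projective action on $\mathbb{P}(V)$. Zariski density of $\Gamma$ in $\mathrm{PSL}_2(\mathbb{C})$ is indispensable: if $\Gamma$ were contained in a conjugate of $\mathrm{PSL}_2(\mathbb{R})$, then $\Lambda_{\Gamma}$ would lie in a circle, one exceptional $\theta$ would give a strip swallowing all of $\Lambda_{\Gamma}$, and the estimate would collapse --- mirroring exactly the obstruction behind Corollary \ref{cor}.
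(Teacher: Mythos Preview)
Your proposal identifies the right ultimate input (Guivarc'h regularity via Lemma~\ref{bq-reg}) but contains a structural gap that makes the argument circular. You fix $w=\gamma_{\mathbf{e}'}z_o$, sum trivially over $\mathbf{e}$, and then need a uniform strip bound for the atomic measure $\nu_\tau$ supported on the \emph{derivatives} $u_{\mathbf{b}}=\tau^{-1}\gamma_{\mathbf{b}'}'(w)$. But that strip bound is exactly Proposition~\ref{thm:no concentration}, which in the paper is \emph{deduced from} the proposition you are proving. Your proposed ``transfer'' from $\mu$ to $\nu_\tau$ via the orbital map $\mathbf{b}\mapsto\gamma_{\mathbf{b}'}(w)$ does not work as stated: one has $\gamma_{\mathbf{b}'}'(w)=c_{\mathbf{b}}^{2}\bigl(\gamma_{\mathbf{b}'}(w)-\gamma_{\mathbf{b}'}(\infty)\bigr)^{2}$, with $\mathbf{b}$-dependent coefficients $c_{\mathbf{b}}$ and $\gamma_{\mathbf{b}'}(\infty)$, so a strip in $u$-space does not pull back to a single polynomial condition on orbit points uniform in $\mathbf{b}$. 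A secondary but genuine error is the claim that $|\gamma_{\mathbf{a}'}'|^{2}$ ``pulls out'' of the determinant under bounded distortion: the evaluation points $\gamma_{\mathbf{x}'}w$ range over a disc of diameter $O(1)$, so the factors $\gamma_{\mathbf{a}'}'(\gamma_{\mathbf{x}'}w)$ can differ in \emph{argument} by $O(1)$, and multiplying the three vertices of a triangle by three complex numbers of equal modulus but different arguments does not scale its area by a controlled factor (one can make the scaled area vanish while the original is of full size).

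The paper's route avoids both issues by reversing the roles of the variables. It \emph{fixes} the triple $(\gamma_1,\gamma_2,\gamma_3)=(\gamma_{\mathbf{a}'\mathbf{b}'},\gamma_{\mathbf{a}'\mathbf{c}'},\gamma_{\mathbf{a}'\mathbf{d}'})\in Z(C_\Gamma,\tau_2)^3$ with $\tau_2=\|\gamma_{\mathbf a}\|^{-2}\tau$, and treats $\det(\gamma_1,\gamma_2,\gamma_3,z)$ as a rational function of $z=\gamma_{\mathbf{e}'}z_o$ whose numerator $P(z)$ is a real polynomial of degree~$8$ in $(z,\bar z)$. The count over $\mathbf{e}$ then becomes a small-value estimate for $P$ at orbit points --- precisely Lemma~\ref{lem:strong PS}, which \emph{is} a direct consequence of Guivarc'h regularity. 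The remaining work is to show that $h(P)\gg\tau_2^{-4}\sigma^{3/4}$ for ``most'' triples (Lemma~\ref{lem:non proj}, by explicit evaluation at a cleverly chosen $z$ near $\gamma_3^{-1}\infty$), and to control the exceptional triples separately via Lemma~\ref{inf count} and an intermediate Proposition~\ref{BD} of the same flavour built on a degree-$6$ polynomial.
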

One of the advantages to consider determinant is that it yields an estimate regardless of the choice of affine lines as stated in Proposition \ref{thm:no concentration}. In \cite[Lemma 3.6]{BD1}, a weaker version of Proposition \ref{thm:no concentration} was proved.  

Note that the absolute value of (\ref{det_mat}) equals $\left\|u_1\wedge u_2 +u_2\wedge u_3+u_3\wedge u_1\right\|$ when taking $u_i=\gamma_i'z$, viewed as an element in $\R^2$. 
We need an elementary lemma in linear algebra. 
 
\begin{lem}[Corollary 3.6 in \cite{li2018thesis}]
\label{linear algebra}	
	Let $X_1,X_2, X_3$ be i.i.d. random vectors in $\R^2$ bounded by $C>0$. Let $l$ be an affine line in $\R^2$. Then for any $c>0$, we have
	\begin{equation}\label{equ:projective from strong}
	\bb P\{d(X_1,l)<c \}^{3}\leq \bb P\{\|X_1\wedge X_2+X_2\wedge X_3+X_3\wedge X_1\|<8Cc \}.
	\end{equation}
\end{lem}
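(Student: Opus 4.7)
The plan is to exploit the classical identity
\[
X_1\wedge X_2+X_2\wedge X_3+X_3\wedge X_1\;=\;(X_2-X_1)\wedge(X_3-X_1),
\]
which expresses the quantity of interest as twice the signed area of the triangle with vertices $X_1,X_2,X_3$ in $\R^2$. This translates the problem into a purely geometric estimate: the event on the right-hand side of \eqref{equ:projective from strong} is simply the event that this triangle has small area.

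First I would invoke independence to write
\[
\bb P\{d(X_1,l)<c\}^{3}\;=\;\bb P\{d(X_i,l)<c\text{ for }i=1,2,3\}.
\]
Next I would show, deterministically, that this intersection event is contained in the event that $\|X_1\wedge X_2+X_2\wedge X_3+X_3\wedge X_1\|<8Cc$. Indeed, on the event $\{d(X_i,l)<c\ \forall i\}$, all three points lie in the intersection of the strip $\{y:d(y,l)<c\}$ with the disk $\{|y|\le C\}$. That intersection is contained in a rectangle of side lengths at most $2C$ (along $l$) and $2c$ (perpendicular to $l$), so the triangle $X_1X_2X_3$ has base $\le 2C$ and height $\le 2c$, hence area $\le 2Cc$. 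Therefore
\[
\bigl\|(X_2-X_1)\wedge(X_3-X_1)\bigr\|\;=\;2\cdot\operatorname{area}(X_1X_2X_3)\;\le\;4Cc\;<\;8Cc,
\]
which is exactly the event on the right-hand side of \eqref{equ:projective from strong}. Combining the two steps yields the desired inequality.

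There is no real obstacle here: the argument only uses the wedge identity and a one-line bounding box computation, and the factor $8$ in $8Cc$ (versus the sharp value $4$) leaves ample slack. The only point worth double-checking is that the boundedness hypothesis $|X_i|\le C$ is indeed needed to control the extent of the strip-disk intersection parallel to $l$; without it, a thin triangle can have arbitrarily large area.
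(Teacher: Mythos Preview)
Your proof is correct and matches the approach the paper itself alludes to: the paper does not give a full proof but cites \cite{li2018thesis} and explains informally that ``if three points (in a bounded region) are near an affine line in $\R^2$, then the area of the triangle formed by these three points must be small, in a quantitative way.'' Your argument is precisely the clean formalization of that sentence via the wedge identity and the bounding-box area estimate.
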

A similar statement can be found in \cite{eskin2005growth}. It says that if three points (in a bounded region) are near an affine line in $\R^2$, then the area of the triangle formed by these three points must be small, in a quantitative way. A key fact is that this bound for the area is {\it independent} of the affine line. 

We show how Proposition \ref{lem:WNC} yields Proposition \ref{thm:no concentration}. The proof is similar to the proof of Lemma 3.9 in \cite{li2018thesis}. We use H\"{o}lder inequality:  expressing $\bf b$ as a product of $\bf b_1$ and $\bf b_2$ we can apply H\"{o}lder inequality, which will allow us to use Lemma \ref{linear algebra}. 
\begin{proof}[From Proposition \ref{lem:WNC} to Proposition \ref{thm:no concentration}]
	Recall that $x\in D_b$ and $\bf a\in Z(\tau)$, we want to compute
	\begin{equation}\label{equ:bztau}
	\#\{\bf b\in Z(\tau)\ | \ \bf a\wa\bf b\wa b,\, |\Re(e^{i\theta}\gamma'_{\bf a'\bf b'}(x))-a|\leq \tau^2\sigma \}.
	\end{equation} 
	Let $\tau_1=\tau^{1/2}$. We divide $\bf b$ into $\bf b=\bf b_1'\bf b_2$ such that $\bf b_1\wa\bf b_2$ and $\bf b_1\in Z(\tau_1)$. Then $\bf b_2$ maybe not in $Z(\tau_1)$, but we have a control by Lemma \ref{concatenation}
	\begin{equation}\label{equ:b1b2}
	C_\Gamma^{-1}\mu(D_{\bf b_1})\mu(D_{\bf b_2}) \leq \mu(D_{\bf b}) \leq C_\Gamma \mu (D_{\bf b_1}) \mu(D_{\bf b_2}).
	\end{equation}
	Hence by \eqref{equ:b1b2} and \eqref{equ:zctau},
	\[Z(\tau)\subset Z(\tau_1)\times Z(C_\Gamma,\tau_1). \]
	Then \eqref{equ:bztau} is less than
	\begin{equation}\label{equ:b1zb2}	
		\#\{\bf b_1\in Z(\tau_1),\bf b_2\in Z(C_\Gamma,\tau_1)\ | \ \bf a\wa\bf b_1\wa \bf b_2\wa x,\,|\Re(e^{i\theta}\gamma'_{\bf a'\bf b_1'}(\gamma_{\bf b_2'}x)\gamma'_{\bf b_2'}(x))-a|\leq \tau^2\sigma \} .
	\end{equation}

	Fix $\bf b_2$, the length of $\gamma'_{\bf b_2'}(x)$ is approximately $\tau_1$, due to Lemma \ref{lem:ia gammaa}. We consider $\bf b_1$. 
	Let $X_1$ be the random variable in $\R^2$ given by $\gamma'_{\bf a'\bf b_1'}(\gamma_{\bf b_2'}x)$ (viewed a vector in $\R^2$) with $\bf b_1$ in $Z(\tau_1)$ and $\bf a\wa \bf b_1\wa \bf b_2$.  Let $$N_{\bf b_2}=\#\{\bf b_1\in Z(\tau_1)| \bf a\wa \bf b_1\wa \bf b_2\}.$$ Every choice of $\bf b_1$ has probability $1/N_{\bf b_2}$.Then \eqref{equ:b1zb2} equals
	\begin{equation}\label{equ:b2z}
	\begin{split}
	&\sum_{\bf b_2\in Z(C_\Gamma,\tau_1),\bf b_2\wa x}\#\{\bf b_1\in Z(\tau_1)\ | \ \bf a\wa\bf b_1\wa \bf b_2,\, |\Re(\gamma'_{\bf b_2'}(x)e^{i\theta}\gamma'_{\bf a'\bf b_1'}(\gamma_{\bf b_2'}x))-a|\leq \tau^2\sigma \}\\
	= &\sum_{\bf b_2\in Z(C_\Gamma,\tau_1),\bf b_2\wa x}N_{\bf b_2}\bb P_{\bf b_2}\left\{\left |\Re\left(\frac{\gamma'_{\bf b_2'}(x)}{|\gamma'_{\bf b_2'}(x)|}e^{i\theta}X_1\right)-a\right|\leq C_\Gamma\tau^{3/2}\sigma \right\}.
	\end{split}
	\end{equation}
	The definition of $X_1$ depends on $\bf b_2$ and we use $\bb P_{\bf b_2}$ to emphasize.
	Equation (\ref{equ:b2z}) means the distance of $X_1$ to a line $l$ is less than $C_\Gamma\tau^{3/2}\sigma$. Be careful that $X_1$ and the line $l$ depend on $\bf b_2$. The length of $X_1$ is approximately $\tau^{3/2}$. We write $ e^{i\theta'}:=\frac{\gamma'_{\bf b_2'}(x)}{|\gamma'_{\bf b_2'}(x)|}e^{i\theta}$.
	By \eqref{equ:projective from strong}, we have
	\begin{align*}
	\bb P_{\bf b_2}\{|\Re (e^{i\theta'}X_1)-a|\leq C_\Gamma\tau^{3/2}\sigma \}^3\leq \bb P_{\bf b_2}\{\|X_1\wedge X_2+X_2\wedge X_3+X_3\wedge X_1\|<C_\Gamma^2\tau^3\sigma \}.
	\end{align*}
	By the H\"older inequality, we have
	\begin{equation}\label{equ:1zb2}
	\begin{split}
	&\frac{1}{\#Z(C_\Gamma,\tau_1,x)}\sum_{\bf b_2}N_{\bf b_2}	\bb P_{\bf b_2}\{|\Re (e^{i\theta'}X_1)-a|\leq C_\Gamma\tau^{3/2}\sigma \}\\
	&\leq \left(	\frac{1}{\#Z(C_\Gamma,\tau_1,x)}\sum_{\bf b_2}	N_{\bf b_2}^3\bb P_{\bf b_2}\{\|X_1\wedge X_2+X_2\wedge X_3+X_3\wedge X_1\|<C_\Gamma^2\tau^3\sigma \}\right)^{1/3},
	\end{split}
	\end{equation}
	where $Z(C_\Gamma,\tau_1,x)=\{\bf b\in Z(C_\Gamma,\tau_1),\bf b\wa x\}$ and $\sum_{\bf b_2}=\sum_{\bf b_2\in Z(C_\Gamma,\tau_1,x)}$. Note that
	\begin{equation}\label{equ:b2x1x2}
	\begin{split}
	&\sum_{\bf b_2}	N_{\bf b_2}^3\bb P_{\bf b_2}\{\|X_1\wedge X_2+X_2\wedge X_3+X_3\wedge X_1\|<C_\Gamma^2\tau^3\sigma \}\\
	= & \# \{(\bf c, \bf d, \bf e)\in Z(\tau_1)^3,\bf b_2\in Z(C_\Gamma,\tau_1)\ | \ \\
	& \bf a \wa \begin{matrix} \bf c\\\bf d\\ \bf e
	\end{matrix}\wa \bf b_2\wa x, \, |\det(\gamma_{\bf a'\bf c'},\gamma_{\bf a'\bf d'},\gamma_{\bf a'\bf e'},\gamma_{\bf b_2'}x)|\leq C_\Gamma^2\tau^3\sigma\}.
	\end{split}
	\end{equation}
	Using Lemma \ref{lem:zctau}, we obtain
	\begin{equation}\label{equ:cde}
	\begin{split}
	&\# \{(\bf c, \bf d, \bf e)\in Z(\tau_1)^3,\bf b_2\in Z(C_\Gamma,\tau_1): \\
	&\bf a \wa \begin{matrix} \bf c\\\bf d\\ \bf e
	\end{matrix}\wa \bf b_2\wa x, \, |\det(\gamma_{\bf a'\bf c'},\gamma_{\bf a'\bf d'},\gamma_{\bf a'\bf e'},\gamma_{\bf b_2'}x)|\leq C_\Gamma^2\tau^3\sigma\}
	\\
	\leq &\#(\cup_{0\leq n\leq l}\cal W_n)\cdot \# \{(\bf c, \bf d, \bf e)\in Z(\tau_1)^3,\bf b_2\in Z(C_\Gamma\tau_1)\ | \ \\
	&\bf a \wa \begin{matrix} \bf c\\\bf d\\ \bf e
	\end{matrix}\wa \bf b_2\wa x', \, |\det(\gamma_{\bf a'\bf c'},\gamma_{\bf a'\bf d'},\gamma_{\bf a'\bf e'},\gamma_{\bf b_2'}x')|\leq C_\Gamma^2\tau^3\sigma\}.
	\end{split}
	\end{equation}
	Observe that $\tau^3\approx\|\gamma_{\bf a}\|^{-4}(\tau_1)^2$.
	Now we apply \eqref{equ:PNC} to estimate \eqref{equ:cde}. Then combining this with \eqref{equ:bztau}, \eqref{equ:b1zb2}, \eqref{equ:b2z}, \eqref{equ:1zb2} and \eqref{equ:b2x1x2}, we prove Proposition \ref{thm:no concentration}.
\end{proof}
 
 \subsection{Estimate on the measure of small values of a real polynomial}
\label{real-rep}
We introduce the following notion. Let $P$ be a polynomial in $z,\bar{z}$ with complex coefficients (not necessarily homogeneous). We call $P$ a {\it real polynomial} if 
\begin{equation}
\label{real polynomial}
P(z,\bar{z})=\overline{P(z,\bar{z})}.
\end{equation}
It is worthwhile to point out that the numerator of the determinant considered in Proposition \ref{lem:WNC} is a real polynomial. Recall that $\mu$ is the Patterson-Sullivan measure of $\Gamma$ on the extended complex plane $\hat{\C}$. We establish the following estimate.
\begin{lem}\label{lem:strong PS}
Fix $n>0$. There exist $C_n,\kappa_n>0$ with $\kappa_n$ depending only on the regularity of the push-forward measure $(e_{n})_*\mu$ on $\bb P V^*_{n}$ (defined in (\ref{induce-map})) such that the following holds. Let $P$ be a real polynomial in $z$ and $\bar z$ of highest degree $n$. Then for $0<r<1$ 
	\begin{equation}\label{equ:strong nuPS}
	\mu\{z\in \mathbb{C}\ | \ |P(z)|\leq r h(P) \}\leq C_nr^{\kappa_n},
	\end{equation}
where $h(P)$ is the maximum of the absolute values of the coefficients of $P$.	
Moreover,	for $0<\tau<r<1$ and $z\in\C$, we have
\begin{equation}
\label{equ:strong PS}
\#\left\{\bf d\in Z(\tau)\ | \ \bf d\qar z,\, \frac{|P(\gamma_{\bf d'}z)|}{h(P)}\leq r\right\}\leq C_n r^{\kappa_n}\# Z(\tau).
\end{equation}
\end{lem}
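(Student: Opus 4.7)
The substance is in (\ref{equ:strong nuPS}); once this is in hand, (\ref{equ:strong PS}) follows by a standard Lipschitz-plus-disjointness argument using the distortion estimates of \S2. My plan for (\ref{equ:strong nuPS}) is to transport the problem to a projective representation of $\slc$ and invoke Guivarc'h's regularity theorem for stationary measures, using the fact (established in the appendix) that $\mu$ is the stationary measure of a random walk on $\slc$ with finite exponential moment.

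\medskip
\noindent
The first step identifies the real vector space $V_n$ of polynomials in $z,\bar z$ of total degree $\leq n$ satisfying (\ref{real polynomial}) with a finite-dimensional real representation of $\slc$ induced from the M\"obius action on $\hat{\C}$. The evaluation map $e_n:\hat{\C}\to \bb P V^*_n$, $z\mapsto[\mathrm{ev}_z]$, is then $\slc$-equivariant, and after comparing the coefficient $\ell^\infty$ norm $h(P)$ with a fixed Euclidean norm on $V_n$, the sub-level set $\{z:|P(z)|\leq r\, h(P)\}$ pulls back from an $O(r)$-neighborhood of the projective hyperplane $H_P=\{[\varphi]:\varphi(P)=0\}\subset \bb P V^*_n$. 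Since $\Gamma$ is Zariski dense in $\mathrm{PSL}_2(\C)$ and $\mu$ is a stationary measure for a Zariski-dense finite-exponential-moment random walk on $\slc$, $(e_n)_*\mu$ is a stationary measure for the induced random walk on $\bb P V^*_n$. Guivarc'h's regularity theorem (cited as Lemma \ref{bq-reg} in the introduction) then provides $\kappa_n>0$ depending only on $(e_n)_*\mu$ such that, uniformly in the hyperplane $H$,
\[ (e_n)_*\mu\{[\varphi]:d([\varphi],H)\leq r\}\leq C_n r^{\kappa_n}, \]
and pulling back along $e_n$ yields (\ref{equ:strong nuPS}).

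\medskip
\noindent
To pass from (\ref{equ:strong nuPS}) to (\ref{equ:strong PS}), I exploit that $P$ of degree $\leq n$ satisfies a Cauchy-type Lipschitz bound $|P(w)-P(w')|\leq C_n h(P)|w-w'|$ on any fixed bounded neighborhood of $\Lambda_\Gamma$. By Lemmas \ref{lem:ia gammaa} and \ref{lem-sep}, for each $\bf d\in Z(\tau)$ the cylinder $D_{\bf d}$ has Euclidean diameter $\lesssim \tau$ and $\mu$-mass $\gtrsim \tau^\delta$; the $D_{\bf d}$ partition $\Lambda_\Gamma$ and $\#Z(\tau)\asymp \tau^{-\delta}$. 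If $\bf d\rightsquigarrow z$ and $|P(\gamma_{\bf d'}z)|\leq r\, h(P)$, then $\gamma_{\bf d'}z\in D_{\bf d}$, so for every $w\in D_{\bf d}\cap \Lambda_\Gamma$,
\[ |P(w)|\leq (r+C_n\tau)h(P)\leq 2r\, h(P), \]
using $\tau<r$. The contributing cylinders are therefore disjoint subsets of $\{w\in\Lambda_\Gamma:|P(w)|\leq 2r\, h(P)\}$, whose $\mu$-mass is controlled by (\ref{equ:strong nuPS}) at $2r$; dividing by the lower bound $\mu(D_{\bf d})\gtrsim \tau^\delta$ and multiplying by $\#Z(\tau)\asymp\tau^{-\delta}$ gives (\ref{equ:strong PS}) after absorbing constants.

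\medskip
\noindent\textbf{Main obstacle.} The substantive step is the projective reduction for (\ref{equ:strong nuPS}): decomposing $V_n$ into strongly irreducible proximal real subrepresentations on which Zariski density of $\Gamma\subset \mathrm{PSL}_2(\C)$ survives, and matching the coefficient norm $h(P)$ with the representation norm so that the Guivarc'h hyperplane neighborhood matches $\{|P|\leq r\, h(P)\}$ up to constants. The counting step deducing (\ref{equ:strong PS}) from (\ref{equ:strong nuPS}) is then routine given \S2.
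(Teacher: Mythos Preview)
Your proposal is correct and follows essentially the paper's approach in both parts: the projective/Guivarc'h reduction for (\ref{equ:strong nuPS}) and the Lipschitz-plus-cylinder argument for (\ref{equ:strong PS}). One simplification in the paper worth noting: rather than decomposing $V_n$ into irreducibles as you anticipate in your ``main obstacle,'' the paper homogenizes $P(z,\bar z)=\sum_{j+l\le n}c_{j,l}z^j\bar z^l$ directly to $f(u_1,u_2)=\sum c_{j,l}u_1^{j}u_2^{n-j}\bar u_1^{l}\bar u_2^{n-l}$, which already lies in the single irreducible real bidegree-$(n,n)$ representation, so no decomposition is needed and the equivariance of $e_n$ is immediate.
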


\subsubsection{Real proximal representations and the use of Guivarc'h regularity}
For the rest of this subsection, we regard the Schottky group $\Gamma$ in concern as a group in $\operatorname{SL}_2(\C)$. To prove Lemma \ref{lem:strong PS}, we consider real proximal irreducible representations of $\operatorname{SL}_2(\C)$.

Let $n$ be any nonnegative integer. Set $W_{n}$ to be the complex vector space of polynomials in $u_1,u_2,\overline{u}_1,\overline{u}_2$ that are homogeneous of degree $n$ in $(u_1,u_2)$ and homogeneous of degree $n$ in $(\overline{u}_1,\overline{u}_2)$. Define a representation $\tilde{\Phi}_{n}$ of $\operatorname{SL}_2(\mathbb{C})$ on $W_{n}$ by
\begin{equation}
\label{pro-rep}
\tilde{\Phi}_{n}\begin{pmatrix} a & b\\ c & d\end{pmatrix} P\begin{pmatrix} u_1\\ u_2\end{pmatrix}=P\left(\begin{pmatrix} a & b\\ c & d\end{pmatrix}^{-1} \begin{pmatrix} u_1\\ u_2\end{pmatrix}\right).
\end{equation}
This is a complex irreducible representation.

Let $V_{n}$ be the ``real" part of $W_{n}$. More precisely, we let $V_{n}$ be the real vector space consisting of polynomials $P(u_1,u_2)\in W_n$ satisfying
\begin{equation*}
\overline{P(u_1,u_2)}=P(u_1,u_2).
\end{equation*}
Note that elements in $V_n$ can be intuitively thought as homogeneous real polynomials. We have $V_{n}\otimes_{\mathbb{R}} \mathbb{C}\cong W_{n}$. Now define a representation $\Phi_{n}$ of $\operatorname{SL}_2(\mathbb{C})$ on $V_{n}$ as in (\ref{pro-rep}).  The induced representation by $\Phi_{n}$ on $V_{n}\otimes_{\mathbb{R}} \mathbb{C}$ is isomorphic to $\tilde{\Phi}_{n}$. So $\Phi_{n}$ is a real irreducible representation of $\operatorname{SL}_2(\mathbb{C})$.  As $\operatorname{SL}_2(\mathbb{C})$ is Zariski connected, $\Phi_n$ is strongly irreducible. Moreover, it is proximal. This is because $\Phi_{n}\begin{pmatrix} e^{t} & 0\\ 0 & e^{-t}\end{pmatrix}$ with $t>0$ is a proximal matrix: $\mathbb{R}u_2^n\bar{u}_2^n$ is the eigenspace that corresponds to the eigenvalue with the greatest absolute value.  

Let $V^{*}_{n}$ be the dual space of $V_{n}$. Denote the dual representation of $\operatorname{SL}_2(\mathbb{C})$ on $V^*_{n}$ by $\Phi^*_{n}$. It is also strongly irreducible and proximal. Consider the map 
\begin{equation*}
\tilde{e}_{n}:\mathbb{C}^2\to V^{*}_{n},\,\,\,(z_1,z_2)\mapsto \tilde{e}_n(z_1,z_2),
\end{equation*}where $\tilde{e}_{n}(z_1,z_2)$ is given by $\tilde{e}_n(z_1,z_2)\left(P(u_1,u_2)\right)=P(z_1,z_2)$ for any $P(u_1,u_2)\in V_n$. It induces the following map
\begin{equation} 
\label{induce-map}
e_n: \bb P^1_{\C}\to \bb P V^{*}_{n}
\end{equation}
which is $\operatorname{SL}_2(\mathbb{C})$-equivariant.

We fix an euclidean norm on $V^*_n$. In a finite dimensional vector space, different norms are equivalent. In particular, this norm is equivalent to the maximal norm. Note that the restriction of the maximal norm to the image of $\tilde{e}_n$ is equivalent to $|z_1|^{2n}+|z_2|^{2n}$. We take the operator norm on $V_n$.  For $x=\mathbb{R} v\in \mathbb{P}V_{n}$ and $y=\mathbb{R} h\in \mathbb{P} V^*_{n}$, define
\begin{equation*}
\Delta (x,y)=\frac{|h(v)|}{\|h\| \|v\|}.
\end{equation*}
	Recall that since the group $\Gamma$ is Zariski dense, we can use Guivarc'h's regularity property of $(e_n)_*\mu$ (see \cite[Theorem 14.1]{benoistquint}).
	\begin{lem}
	\label{bq-reg}
		There exist $C,\kappa_n>0$ such that for every $F\in \bb P V_{n}$, we have
		\[(e_n)_*\mu\left \{x\in\bb P V_{n}^*\big|\,\Delta (F, x)\leq r \right\}\leq Cr^{\kappa_n}. \]
	\end{lem}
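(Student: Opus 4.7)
The plan is to apply Guivarc'h's regularity theorem for stationary measures on projective spaces, in the form \cite[Theorem 14.1]{benoistquint}, to the push-forward measure $(e_n)_*\mu$ on $\mathbb{P}V_n^*$. The quantity $\Delta(F, x) = |h(v)|/(\|h\|\,\|v\|)$ is, up to a choice of norm, the projective distance from $x = \mathbb{R}h \in \mathbb{P}V_n^*$ to the projective hyperplane $\{y \in \mathbb{P}V_n^* : y(v) = 0\}$ determined by $F = \mathbb{R}v$. Thus the conclusion sought is exactly a H\"older-regularity statement of the form $\nu(\text{$r$-neighborhood of any projective hyperplane}) \leq Cr^{\kappa_n}$, which is the hypothesis-free output of the Benoist--Quint theorem once one has a stationary measure for a suitable random walk.

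The first step is to realize $\mu$ as the stationary measure of a random walk on $\mathrm{SL}_2(\mathbb{C})$ with finite exponential moment. This is precisely the content of the appendix announced in the abstract, so I will invoke it: there is a probability measure $\nu$ on $\mathrm{SL}_2(\mathbb{C})$, with $\int \|g\|^\alpha\, d\nu(g) < \infty$ for some $\alpha > 0$ and with support inside $\Gamma$ (or at least generating a group whose Zariski closure equals that of $\Gamma$), such that $\mu = \nu * \mu$ as measures on $\mathbb{P}^1_{\mathbb{C}}$. By pushing forward under the $\mathrm{SL}_2(\mathbb{C})$-equivariant map $e_n : \mathbb{P}^1_{\mathbb{C}} \to \mathbb{P}V_n^*$, the measure $(e_n)_*\mu$ becomes $\nu$-stationary on $\mathbb{P}V_n^*$ for the representation $\Phi_n^*$.

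The second step is to verify that $\Phi_n^*$ satisfies the hypotheses of \cite[Theorem 14.1]{benoistquint}. The representation $\Phi_n$ on $V_n$, and hence its dual $\Phi_n^*$ on $V_n^*$, was already shown in the excerpt to be proximal and strongly irreducible, using that $\mathrm{SL}_2(\mathbb{C})$ is Zariski connected and that a diagonal element produces a simple dominant eigenvalue. Since $\Gamma$ is Zariski dense in $\mathrm{SL}_2(\mathbb{C})$ and $\nu$ has support generating a Zariski-dense subgroup, the image of $\nu$ under $\Phi_n^*$ also generates a strongly irreducible and proximal subgroup of $\mathrm{GL}(V_n^*)$. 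Together with the exponential moment condition on $\nu$, all hypotheses of the Benoist--Quint regularity theorem are met.

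The third and final step is to conclude. Applying \cite[Theorem 14.1]{benoistquint} directly yields constants $C, \kappa_n > 0$, depending only on $\nu$ and the representation, such that for every projective hyperplane $H \subset \mathbb{P}V_n^*$,
\[
(e_n)_*\mu\bigl\{x \in \mathbb{P}V_n^* : d(x, H) \leq r\bigr\} \leq Cr^{\kappa_n}.
\]
Specializing $H$ to be the hyperplane dual to $F = \mathbb{R}v \in \mathbb{P}V_n$ and noting that $\Delta(F, x)$ is comparable to $d(x, H)$ (uniformly in $F$, since $\mathbb{P}V_n$ is compact) gives the claim. The main potential obstacle is not in this application itself but in justifying the stationarity step: one must check that the probability measure $\nu$ produced in the appendix has finite exponential moment in the sense required by \cite{benoistquint} (controlling $\int \log \|\Phi_n^*(g)\|\, d\nu(g)$ and the corresponding higher moments), but this is exactly what the appendix is set up to provide.
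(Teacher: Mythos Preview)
Your proposal is correct and follows essentially the same route as the paper: invoke the appendix to realize $\mu$ as a $\nu$-stationary measure with finite exponential moment, push forward via the $\mathrm{SL}_2(\mathbb{C})$-equivariant map $e_n$ to obtain stationarity of $(e_n)_*\mu$ on $\mathbb{P}V_n^*$, verify that $\Phi_n^*(\Gamma)$ acts proximally and strongly irreducibly (the paper cites \cite{gol1989lyapunov} to pass this from the Zariski closure $\Phi_n^*(\mathrm{SL}_2(\mathbb{C}))$ down to $\Gamma$), and then apply \cite[Theorem~14.1]{benoistquint}. The only cosmetic addition in your write-up is the explicit identification of $\Delta(F,x)$ with the projective distance to the hyperplane dual to $F$, which the paper leaves implicit.
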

	To be able to apply Theorem 14.1 from \cite{benoistquint}, we need to check that the Patterson-Sullivan measure $(e_n)^*\mu$ is a Furstenberg measure (i.e. a stationary measure) arising from a random walk on $\mathrm{GL}(V_n^*)$ with finite exponential moment. We also need to make sure that the Schottky group $\Gamma$ acts on $V^*_{n}$ proximally and strongly irreducibly. 
	
	For the Patterson-Sullivan measure related to a convex co-compact group, it is exactly shown in the appendix  that it is a Furstenberg measure on $\hat{\C}$ with finite exponential moment arising from a random walk on $\Gamma$. Since the map \eqref{induce-map} is $\slc$-equivariant, the measure $(e_n)_*\mu$ is a Furstenberg measure on $\P V_n^*$ arising from a random walk on $\Phi_n^*(\Gamma)<\rm{GL}(V_n^*)$. For the proximal condition, we know that the subgroup $\Phi_n^*(\Gamma)$ is proximal iff its Zariski closure $\Phi_n^*(\slc)$ is proximal \cite{gol1989lyapunov}. For the strongly irreducible condition, it follows from the fact that the subgroup $\Phi_n^*(\Gamma)$ is strongly irreducible iff its Zariski closure is so \cite{gol1989lyapunov}. Furthermore, an irreducible representation of a connected group is always strongly irreducible. 



\subsubsection{Proof of Lemma \ref{lem:strong PS}}
We show how to deduce Lemma \ref{lem:strong PS} from Lemma \ref{bq-reg}. 
\begin{proof}[Proof of Lemma \ref{lem:strong PS}]
The idea is to express the polynomial $P$ as a linear functional on $V_{n}^*$. Write 
\begin{equation*}
P(z,\bar{z})=\sum_{0\leq j+l\leq n}c_{j,l}\,z^{j}\bar{z}^l\,\,\,\text{with}\,\,c_{j,l}\text{'s complex numbers}.
\end{equation*}
 Set $f(u_1,u_2):=\sum_{0\leq j+l\leq n}c_{j,l} u_1^{j}u_2^{n-j}\bar{u}_1^{l}\bar{u}_2^{n-l}$. As $P$ is a real polynomial, we have $f\in V_{n}$. Take a homogeneous coordinate of $z\in\hat{\C}$, that is, $z=\frac{z_1}{z_2}$. 	Then we have
 	\[P(z)=f(\tilde{e}_n(z_1,z_2))/|z_2|^{2n}. \]
	Since the support of Patterson-Sullivan measure is bounded by $C$, we have
	\[|P(z)|=\frac{|f(\tilde{e}_n(z_1,z_2))|}{|z_2|^{2n}}\leq_C \frac{|f(\tilde{e}_n(z_1,z_2))|}{\|\tilde{e}_n(z_1,z_2)\|}.  \]
Note that because of finite dimension, $\|f\|$ is equivalent to $h(P)$. Therefore
	\begin{equation*}
	\frac{|P(z)|}{h(P)}\leq_C\frac{|f\left(\tilde{e}_n(z_1,z_2)\right)|}{\|f\|\|\tilde{e}_n(z_1,z_2)\|}=\Delta (\bb P f, e_n [z_1:z_2]). 
	\end{equation*}
	We apply Lemma \ref{bq-reg} to $(e_n)_*\mu$ and hence the first statement is proved. 
	
	Now we prove the second inequality \eqref{equ:strong PS}. The idea is to replace the counting by the Patterson-Sullivan measure. If $\bf d$ in $Z(\tau)$ satisfies the condition in \eqref{equ:strong PS}, then $\gamma_{\bf d'}z$ in $D_{\bf d}$. Using Lemma \ref{elementary}, we have for $w$ in $D_{\bf d}$
	\begin{align*}
		|P(w)|&\leq |P(\gamma_{\bf d'}z)|+|P(\gamma_{\bf d'}z)-P(w)|
		\leq h(P) r+h(P)C_\Gamma|w-\gamma_{\bf d'}z|\\
		&\leq h(P) r+h(P)C_\Gamma\tau\leq C_\Gamma h(P)r.
	\end{align*}
	Note that $\mu(D_{\bf d})\geq C_\Gamma^{-1}\tau^\delta$, we can replace the counting measure by $C_\Gamma\tau^{-\delta}\mu|_{D_{\bf d}}$ and replace the condition $|P(\gamma_{\bf d'}z)|\leq h(P)r$ by $|P(w)|\leq C_\Gamma h(P)r$.  Hence the left hand side of \eqref{equ:strong PS} is less than
	\begin{align*}
		C_\Gamma\tau^{-\delta}\mu\{w||P(w)|\leq C_\Gamma h(P) r \}.
	\end{align*}
	We then use \eqref{equ:strong nuPS} to finish the proof.
\end{proof}

\subsection{Proof of Proposition \ref{lem:WNC}: non-concentration of the real part}
Let $C$ be a constant which depend only on $C_\Gamma$ and constants $C_n$ in Lemma \ref{lem:strong PS} and it may vary from line to line.

We prove the non concentration of the real part using Lemma \ref{lem:strong PS}. The main point is to verify that $h(P)$ is large. 
\begin{prop}
	\label{BD}
	There exists $\epsilon=\epsilon(\mu)>0$ such that for any $0<\tau,\tau_1\leq \sigma\leq 1/N_0$, $z_0\in \C$ and $\bf a\in \mathcal{W}^{\circ}$, we have
	\begin{align}
	\label{deri count}
	\#\{&(\bf b, \bf c)\in Z(\tau)^2,\bf d\in Z(\tau_1)\ | \ \bf a \wa\begin{matrix}
	\bf b\\ \bf c
	\end{matrix},\ \bf d\wa z_o,\nonumber\\
	&|\operatorname{Re}(\gamma'_{\bf a' \bf b'}(\gamma_{\bf d'}z_o)-\gamma'_{\bf a' \bf c'}(\gamma_{\bf d'}z_o))|\leq \|\gamma_{\bf a}\|_{\mathcal{S}}^{-2}\tau\sigma\}
	\leq C\tau^{-2\delta}\tau_1^{-\delta} \sigma^{\epsilon}.
	\end{align}
\end{prop}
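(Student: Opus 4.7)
The plan is to convert the hypothesis on $|\operatorname{Re}(\gamma_1'-\gamma_2')|$, where $\gamma_1:=\gamma_{\bf a' \bf b'}$ and $\gamma_2:=\gamma_{\bf a' \bf c'}$, into a small-value statement for a single real polynomial in $(z,\bar z)$, and then apply Lemma~\ref{lem:strong PS}. Writing the lower rows of the $\mathrm{SL}_2(\C)$-lifts as $(c_i,d_i)$ and letting $v_i(z):=(c_iz+d_i)^2$, the factorization
\[
\gamma_1'(z)-\gamma_2'(z) \;=\; \frac{v_2-v_1}{v_1 v_2} \;=\; \frac{L_1(z)\,L_2(z)}{v_1(z)\,v_2(z)},
\]
with $L_1(z):=(c_2-c_1)z+(d_2-d_1)$ and $L_2(z):=(c_1+c_2)z+(d_1+d_2)$, yields after taking real parts
\[
\operatorname{Re}\bigl(\gamma_1'(z)-\gamma_2'(z)\bigr) \;=\; \frac{P_{\bf b,\bf c}(z,\bar z)}{|v_1(z)|^2\,|v_2(z)|^2},\qquad P_{\bf b,\bf c}:=\operatorname{Re}\bigl(L_1 L_2\,\overline{v_1 v_2}\bigr),
\]
which is a real polynomial of total degree at most $6$.

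The distortion estimates (Lemmas~\ref{lem:norm c}, \ref{lem:lip derivative} and the chain rule, combined with $\bf b,\bf c\in Z(\tau)$) imply $|v_i(z)| \asymp \|\gamma_i\|_{\mathcal S}^2 \asymp \|\gamma_{\bf a}\|_{\mathcal S}^2\tau^{-1}$ uniformly for $z\in D_{\bf d}$, so the hypothesis of \eqref{deri count} translates to $|P_{\bf b,\bf c}(z,\bar z)| \lesssim \|\gamma_{\bf a}\|_{\mathcal S}^6\,\tau^{-3}\,\sigma$. The diagonal case $\bf b=\bf c$ satisfies $P_{\bf b,\bf b}\equiv 0$ and trivially contributes at most $\#Z(\tau)\cdot\#Z(\tau_1)\asymp\tau^{-\delta}\tau_1^{-\delta}$, which is absorbed in the target bound $\tau^{-2\delta}\tau_1^{-\delta}\sigma^\epsilon$ as soon as $\epsilon\le\delta$ (using $\tau\le\sigma$).

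For $\bf b\neq\bf c$, the decisive step is a lower bound on the polynomial height $h(P_{\bf b,\bf c})$. Setting $\|L_1\|:=\max(|c_2-c_1|,|d_2-d_1|)$, I will inspect both the $z^2\bar z^4$ coefficient $\operatorname{Re}((c_2^2-c_1^2)\bar c_1^2\bar c_2^2)$ and the $z^0\bar z^0$ coefficient $\operatorname{Re}((d_2^2-d_1^2)\bar d_1^2\bar d_2^2)$; the comparability $|c_i|\asymp|d_i|\asymp\|\gamma_{\bf a}\|_{\mathcal S}\tau^{-1/2}$ from Lemma~\ref{lem:norm c} forces at least one of these coefficients to have modulus $\gtrsim \|L_1\|\cdot\|\gamma_{\bf a}\|_{\mathcal S}^5\tau^{-5/2}$, which also covers the potentially-degenerate case where $c_1+c_2$ or $d_1+d_2$ is small. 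Applying Lemma~\ref{lem:strong PS} with $n=6$ then bounds the number of admissible $\bf d$'s, for a fixed pair $(\bf b,\bf c)$, by $C\, r_{\bf b,\bf c}^{\kappa_6}\,\tau_1^{-\delta}$, where $r_{\bf b,\bf c}:=C\|\gamma_{\bf a}\|_{\mathcal S}\tau^{-1/2}\sigma/\|L_1\|$ (and we fall back on the trivial $\lesssim\tau_1^{-\delta}$ whenever $r_{\bf b,\bf c}\ge 1$).

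It remains to sum over $(\bf b,\bf c)$ via a dyadic decomposition of $\|L_1\|$ into intervals $[2^{-k}M_0,\,2^{-k+1}M_0]$ with $M_0\asymp\|\gamma_{\bf a}\|_{\mathcal S}\tau^{-1/2}$. The count of pairs in each dyadic shell is controlled through the matrix identity $(c_2-c_1,\,d_2-d_1)=(c_{\bf a'},d_{\bf a'})\cdot(\gamma_{\bf c'}-\gamma_{\bf b'})$ together with the separation Lemma~\ref{lem-sep} and Lemma~\ref{lem:norm c}, the target being a bound $\#\{(\bf b,\bf c)\in Z(\tau)^2:\|L_1\|\le M\}\lesssim\tau^{-2\delta}(M/M_0)^{\kappa'}$ for some $\kappa'>0$. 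Once this is in hand, summing the resulting geometric series in $k$ produces the claimed bound $C\tau^{-2\delta}\tau_1^{-\delta}\sigma^\epsilon$ with $\epsilon$ a positive fraction of $\min(\kappa_6,\kappa')$. I expect this last pair-counting step to be the principal obstacle: it is a genuine non-concentration statement for the difference of the lower rows of $\gamma_{\bf a'\bf b'}$ and $\gamma_{\bf a'\bf c'}$ as $\bf b,\bf c$ range over $Z(\tau)$, and it is where the ping-pong geometry of the Schottky group (and, if needed, a bootstrap using the Patterson-Sullivan regularity exploited in Lemma~\ref{lem:strong PS}) must be brought to bear.
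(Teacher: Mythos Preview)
Your overall architecture matches the paper's: reduce to a real polynomial $P$ of degree~$6$, bound the denominator uniformly, prove a height lower bound in terms of some separation quantity, apply Lemma~\ref{lem:strong PS} to count $\bf d$, and separately count the pairs $(\bf b,\bf c)$ with small separation. The difference is in the choice of separation quantity, and your choice creates a real gap.

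You take $\|L_1\|=\max(|c_2-c_1|,|d_2-d_1|)$ and claim $h(P)\gtrsim\|L_1\|\,M_0^5$ by inspecting the $z^2\bar z^4$ and $z^0\bar z^0$ coefficients. First, a minor point: the $z^2\bar z^4$ coefficient of $P$ is $\tfrac12(c_2^2-c_1^2)\bar c_1^2\bar c_2^2$, not its real part (a real polynomial in $z,\bar z$ has complex coefficients satisfying $\bar p_{jk}=p_{kj}$). More seriously, your factorization $P=\Re(L_1L_2\,\overline{v_1v_2})$ shows that $h(P)$ is controlled by $\|L_1\|\cdot\|L_2\|\cdot M_0^4$, and $\|L_2\|=\max(|c_1+c_2|,|d_1+d_2|)$ need not be $\asymp M_0$. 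If $|c_1+c_2|$ and $|d_1+d_2|$ are both small (equivalently, the chosen $\mathrm{SL}_2$-lifts satisfy $\gamma_2\approx-\gamma_1$), then $\|L_1\|\asymp M_0$ while every coefficient of $P$ is small; in particular your two inspected coefficients, which contain the factors $c_2^2-c_1^2=(c_2-c_1)(c_2+c_1)$ and $d_2^2-d_1^2$, are both small. So the height bound fails as stated, and since $\|L_1\|$ depends on the lift while the problem does not, this is not easily repaired.

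The paper avoids this by using the lift-independent, geometric quantity $A_1:=\gamma_1^{-1}\infty-\gamma_2^{-1}\infty=d_2/c_2-d_1/c_1$. The height bound (Lemma~\ref{lem:non real}) is obtained not by inspecting coefficients but by \emph{evaluating} $P$ at a well-chosen point $z=A-d_1/c_1$ with $|A|=|A_1|/10$ and an appropriate argument for $A$, giving $h(P)\gg\tau_2^{-3}|A_1|^6$; this is robust and never sees $L_2$. The pair count---your ``principal obstacle''---then becomes elementary: $\gamma_{\bf a'\bf c'}^{-1}\infty$ lies in the cylinder $D_{\overline{\bf c'}}$, so $\#\{\bf c:|A_1|\le\sigma^{1/12}\}$ is bounded by a cylinder-meets-disk count (Lemma~\ref{tree-count}), yielding Lemma~\ref{inf count}. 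Switching your separation quantity from $\|L_1\|$ to $|A_1|$ makes both halves of the dichotomy go through cleanly.
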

The idea is to prove that for ``most'' $\gamma_1,\gamma_2$ in $Z(C_\Gamma,\tau_2)$, where $\tau_2>0$ will be take as $\|\gamma_{\bf a}\|_{\mathcal{S}}^{-2}\tau$, we have
\begin{equation*}
\#\{\bf d\in Z(\tau_1)\ | \ \bf d\wa z_o,\, |\Re(\gamma_1'\gamma_{\bf d'}z_0)-\Re(\gamma_2'\gamma_{\bf d'}z_0)|\leq \tau_2\sigma \}\leq \sigma^{\epsilon}\#Z(\tau_1).
\end{equation*}

We make some computation
\begin{equation}\label{equ:regamma}
\begin{split}
	&\Re(\gamma_1'z-\gamma_2'z)=\Re\left(\frac{1}{(c_1z+d_1)^2}-\frac{1}{(c_2z+d_2)^2}\right)\\
	=&\Re\left(\frac{((c_2z+d_2)^2-(c_1z+d_1)^2)\overline{(c_1z+d_1)^2}\overline{(c_2z+d_2)^2}}{|c_1z+d_1|^4|c_2z+d_2|^4}\right).
\end{split}
\end{equation}
Let $P(z)$ be the real part of the numerator. We need to estimate $h(P)$. The following lemma is about the pairs $(\bf b, \bf c)$'s which yield the polynomials $P$ that might have small $h(P)$. 
\begin{lem}
	\label{inf count}
	Let $\sigma\geq\tau$. For each $\bf b\in Z(\tau)$ such that $\bf a \wa \bf b$, we have
	\begin{equation}
	\label{eq-inf-count}
	\#\{\bf c\in Z(\tau)\ | \ \bf a \wa \bf c, |\gamma^{-1}_{\bf a' \bf b'}(\infty)-\gamma^{-1}_{\bf a' \bf c'}(\infty)|\leq \sigma\}\leq C \tau^{-\delta} \sigma^{\delta}.
	\end{equation}
\end{lem}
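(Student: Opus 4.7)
The strategy is to translate the condition geometrically and invoke the tree-counting Lemma \ref{tree-count}. Using the general identity $\gamma_{\bf u}^{-1}=\gamma_{\overline{\bf u}}$, set $p:=\gamma_{\overline{\bf a'}}(\infty)$ (understood as $\infty$ if $\bf a'=\emptyset$). Then
$$\gamma^{-1}_{\bf a'\bf b'}(\infty)=\gamma_{\overline{\bf b'}}(p)=:z_0,\qquad \gamma^{-1}_{\bf a'\bf c'}(\infty)=\gamma_{\overline{\bf c'}}(p),$$
so the quantity to be bounded is
$$\#\{\bf c\in Z(\tau)\ |\ \bf a\wa\bf c,\ \gamma_{\overline{\bf c'}}(p)\in B(z_0,\sigma)\}.$$

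The geometric claim to establish is that $\gamma_{\overline{\bf c'}}(p)\in D_{\overline{\bf c'}}$. This follows by a short ping-pong induction: $p\in D_{\overline{a_{n-1}}}$ is disjoint from $\overline{D_{c_1}}=\overline{D_{a_n}}$ because $\bf a$ is reduced (so $\overline{a_{n-1}}\neq a_n$), hence $\gamma_{\overline{c_1}}(p)\in D_{\overline{c_1}}$; then reducedness of $\bf c$ gives $c_{j+1}\neq \overline{c_j}$, so $D_{\overline{c_j}}\cap \overline{D_{c_{j+1}}}=\emptyset$, and iterating yields the containment. From $\bf c\in Z(\tau)$ and the parent-child Lemma \ref{lem-pa-ch} we get $\mu(D_{\bf c'})\approx \tau^\delta$; the reversal Lemma \ref{lem:reversal} then gives $\mu(D_{\overline{\bf c'}})\approx \tau^\delta$; and combining Lemmas \ref{lem:ia gammaa} and \ref{lem:lip derivative} one has $\dia(D_{\overline{\bf c'}})\ll \mu(D_{\overline{\bf c'}})^{1/\delta}\approx \tau$.

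Since $\tau\leq \sigma$, any admissible $\bf c$ produces a cylinder $D_{\overline{\bf c'}}$ of PS-mass comparable to $\tau^\delta$ which meets $B(z_0,\sigma)$ and is itself contained in $B(z_0,2\sigma)$. For $\sigma$ smaller than a constant depending only on $\Gamma$, $B(z_0,2\sigma)\cap D_a$ (for each $a\in\mathcal{A}$) is either empty or coverable by $O(1)$ euclidean discs of radius $O(\sigma)$ contained in $D_a$. Applying Lemma \ref{tree-count} to each such disc, with a fixed $C_0$ chosen so that $\tau^\delta\leq \mu(D_{\overline{\bf c'}})\leq C_0\tau^\delta$, and summing over the finite alphabet $\mathcal{A}$, bounds the number of distinct words $\overline{\bf c'}$ arising by $C_\Gamma \tau^{-\delta}\sigma^\delta + C_\Gamma$.

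Finally, two distinct $\bf c_1,\bf c_2\in Z(\tau)$ produce the same reversed prefix $\overline{\bf c'}$ only when they share the same $\bf c'$, i.e.\ are siblings; and a parent word has at most $2r-1$ children in $Z(\tau)$. Multiplying by this constant overhead and absorbing the additive $C_\Gamma$ into $C_\Gamma \tau^{-\delta}\sigma^\delta$ (legitimate because $\sigma\geq\tau$) yields the desired bound $C\tau^{-\delta}\sigma^\delta$. The only delicate step is the ping-pong verification that $\gamma_{\overline{\bf c'}}(p)\in D_{\overline{\bf c'}}$; once this geometric containment is in hand together with the standard measure and diameter estimates on cylinders, Lemma \ref{tree-count} does the bulk of the work.
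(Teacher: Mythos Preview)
Your proposal is correct and follows essentially the same approach as the paper: both set $\bf e=\overline{\bf c'}$, observe that $\gamma^{-1}_{\bf a'\bf c'}(\infty)\in D_{\bf e}$ with $\mu(D_{\bf e})\approx\tau^\delta$ via the parent--child and reversal lemmas, and then invoke Lemma \ref{tree-count} on the disc of radius $\sigma$ centered at $\gamma^{-1}_{\bf a'\bf b'}(\infty)$, absorbing the finite multiplicity of the map $\bf c\mapsto\overline{\bf c'}$. Your write-up is in fact somewhat more careful than the paper's in two places: you spell out the ping-pong verification that $\gamma_{\overline{\bf c'}}(p)\in D_{\overline{\bf c'}}$, and you explicitly address the hypothesis in Lemma \ref{tree-count} that $\Omega\subset D_a$ (the paper applies the lemma directly without comment). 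The diameter bound $\dia(D_{\overline{\bf c'}})\ll\tau$ you include is correct but not actually needed, since Lemma \ref{tree-count} only requires $D_{\bf e}\cap\Omega\neq\emptyset$.
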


\begin{proof}
	Denote $\bf e=\overline{\bf c'}$. We have $\gamma^{-1}_{\bf a' \bf c'}(\infty)=\gamma_{\bf e \overline{\bf a'}}(\infty)\in D_{\bf e}$. Also, $C_{\Gamma}^{-1}\tau^{\delta} < \mu (D_{\bf e})<C_{\Gamma}^2 \tau^{\delta}$ by Lemma \ref{lem-pa-ch} and \ref{lem:reversal}. Therefore, the left-hand side of (\ref{eq-inf-count}) is bounded by
	\begin{equation*}
	2r\cdot \#\{\bf e\in \mathcal{W}^{\circ}\ | \ C_{\Gamma}^{-1}\tau^{\delta}<\mu (D_{\bf e})<C_{\Gamma}^2\tau^{\delta},\,D_{\bf e}\cap \Omega\neq \emptyset\},
	\end{equation*}
	where $\Omega$ is the disk of radius $\sigma$, centered at $\gamma^{-1}_{\bf a' \bf b'}(\infty)$. Now (\ref{eq-inf-count}) follows from Lemma \ref{tree-count}.
\end{proof}

\begin{lem}\label{lem:non real}
	Let $\tau_1,\tau_2>0$ and $\sigma>\tau_1,\tau_2$. Let $\gamma_1,\gamma_2\in Z(C_\Gamma,\tau_2)$ and $A_1=\gamma_1^{-1}\infty-\gamma_2^{-1}\infty$. If $|A_1|\geq \sigma^{1/12}$, then we have
	\begin{equation}\label{equ:non real}
	\#\{\bf d\in Z(\tau_1)\ | \  \bf d\wa z_o,\ |\Re(\gamma_1'\gamma_{\bf d'}z_o)-\Re(\gamma_2'\gamma_{\bf d'}z_o)|\leq \tau_2\sigma \}\leq C\sigma^{\epsilon}\#Z(\tau_1),
	\end{equation}
	where $\epsilon=\kappa_6/2$ in Lemma \ref{lem:strong PS}. 
\end{lem}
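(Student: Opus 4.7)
The strategy is to derive an explicit real polynomial from identity \eqref{equ:regamma} and invoke Lemma \ref{lem:strong PS}. Setting $A(z) := (c_1 z + d_1)^2$, $B(z) := (c_2 z + d_2)^2$, $R(z) := B(z) - A(z)$, identity \eqref{equ:regamma} reads
\[
\Re\bigl(\gamma_1'(z) - \gamma_2'(z)\bigr) = \frac{P(z,\bar z)}{|c_1 z + d_1|^4\,|c_2 z + d_2|^4},\qquad P := \Re\bigl(R(z)\,\overline{A(z)B(z)}\bigr),
\]
which is a real polynomial in $(z,\bar z)$ of total degree at most $6$, hence admissible for Lemma \ref{lem:strong PS} with $n=6$. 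The plan has three ingredients: (i) bound the denominator by $\tau_2^{-4}$ up to constants; (ii) establish the lower bound $h(P) \gtrsim \tau_2^{-3}|A_1|^2$; (iii) conclude via \eqref{equ:strong PS}.

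For (i), since $\gamma_j \in Z(C_\Gamma,\tau_2)$, Lemmas \ref{lem:ia gammaa} and \ref{lem:lip derivative} give $|c_j| = \|\gamma_j\|_{\mathcal S} \approx \tau_2^{-1/2}$. Writing $c_j z + d_j = c_j(z - p_j)$ with $p_j := \gamma_j^{-1}(\infty) \in \mathbf D$, the Schottky disjointness of the discs forces $|z - p_j| \gtrsim 1$ for $z = \gamma_{\bf d'}z_o \in D_{\bf d}$, except when the first letter of $\bf d$ coincides with the disc containing $p_j$; in those exceptional cases $\gamma_j'(z)$ blows up while $\gamma_{3-j}'(z)$ remains bounded, so the hypothesis $|\Re(\gamma_1'z-\gamma_2'z)|\leq \tau_2\sigma$ is automatically violated. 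On the generic set one thus has $|c_j z+d_j|^4 \approx \tau_2^{-2}$ and $|A|^2|B|^2 \approx \tau_2^{-4}$, turning the hypothesis into $|P(\gamma_{\bf d'}z_o)| \lesssim \tau_2^{-3}\sigma$.

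For (ii), evaluate $R$ at $p_1$: since $A(p_1) = 0$ and $c_2 p_1 + d_2 = c_2(p_1 - p_2)$, one has $R(p_1) = c_2^2 A_1^2$, so $|R(p_1)| \approx \tau_2^{-1}|A_1|^2$. Because $|p_1|$ is bounded by a constant depending only on $\Gamma$, this forces $h(R) \gtrsim \tau_2^{-1}|A_1|^2$. Meanwhile $\overline{AB}$, viewed as a polynomial in $\bar z$, has leading coefficient $\bar c_1^2 \bar c_2^2$ of modulus $\approx \tau_2^{-2}$, so its height $\gtrsim \tau_2^{-2}$ is attained at the top $\bar z$-degree. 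Because $R$ depends only on $z$ and $\overline{AB}$ only on $\bar z$, the coefficients of $Q := R\cdot\overline{AB}$ factorize as products, yielding $h(Q) = h(R)\,h(\overline{AB}) \gtrsim \tau_2^{-3}|A_1|^2$. Writing $P = (Q+\bar Q)/2$ and noting that $\bar Q$ has $\bar z$-degree at most $2$, the coefficient of $P$ at $z^{j_*}\bar z^4$ (for $j_*$ attaining $h(R)$) equals exactly $R_{j_*}\bar c_1^2 \bar c_2^2/2$, lies outside the support of $\bar Q$, and therefore suffers no cancellation; this forces $h(P) \gtrsim \tau_2^{-3}|A_1|^2$.

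Combining (i) and (ii) with $|A_1|^2 \geq \sigma^{1/6}$ gives $|P(\gamma_{\bf d'}z_o)|/h(P) \lesssim \sigma/|A_1|^2 \leq \sigma^{5/6}$, and applying \eqref{equ:strong PS} with $n = 6$ and $r = C\sigma^{5/6}$ (valid since $\tau_1 < \sigma < \sigma^{5/6} < 1$) bounds the count by $C\sigma^{5\kappa_6/6}\#Z(\tau_1) \leq C\sigma^{\kappa_6/2}\#Z(\tau_1)$, since $\sigma < 1$ and $5/6 > 1/2$. \emph{The main obstacle} is step (ii): general multiplicative height inequalities give only $h(fg) \leq \deg(fg)\cdot h(f)\,h(g)$, i.e.\ the wrong direction. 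What saves the argument is the product structure --- $R$ depends only on $z$ and $\overline{AB}$ only on $\bar z$, so the heights multiply exactly --- together with the observation that, after real-part extraction, the coefficients of $P$ at high $\bar z$-degree (which lie beyond the reach of $\bar Q$) retain the full size of the product.
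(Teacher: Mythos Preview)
Your proof is correct and takes a genuinely different route from the paper for the key step (ii). The paper estimates $h(P)$ by \emph{evaluating} $P$ at a cleverly chosen point $z = A - d_1/c_1$ with $|A| = |A_1|/10$ and a carefully selected argument, obtaining $|P(z)| \gg \tau_2^{-3}|A_1|^6$ and hence $h(P)\gg \tau_2^{-3}\sigma^{1/2}$. You instead exploit the \emph{structure} of $P = \Re(R\cdot\overline{AB})$: since $R$ depends only on $z$ (degree $\le 2$) and $\overline{AB}$ only on $\bar z$ (degree $4$), the coefficient of $z^{j_*}\bar z^4$ in $P$ equals $\tfrac12 R_{j_*}\bar c_1^2\bar c_2^2$ with no contribution from $\bar Q$, yielding $h(P)\gtrsim h(R)|c_1c_2|^2 \gtrsim \tau_2^{-3}|A_1|^2$. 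This is in fact sharper than the paper's bound (giving exponent $5\kappa_6/6$ rather than $\kappa_6/2$) and sidesteps the delicate choice of evaluation point. On the other hand, the paper's evaluation method is the one that scales to Lemma~\ref{lem:non proj}, where the polynomial no longer factors as a product of holomorphic and antiholomorphic parts.

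One minor point on step (i): your discussion of lower bounds on $|z-p_j|$ and ``exceptional cases'' is unnecessary. To convert $|\Re(\gamma_1'z-\gamma_2'z)|\le\tau_2\sigma$ into $|P(z)|\lesssim\tau_2^{-3}\sigma$ you need only the \emph{upper} bound $|c_jz+d_j|\lesssim\tau_2^{-1/2}$, which holds for every $z\in\mathbf D$ since both $z$ and $p_j$ lie in the bounded set $\mathbf D$. (Your claim that the hypothesis is ``automatically violated'' in the exceptional cases is also not quite right, since $\Re(\gamma_1'z)$ can vanish even when $|\gamma_1'z|$ is large; but this is irrelevant to the argument.)
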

\begin{proof}
	For $i=1,2$, we have $\|\gamma_i\|_{\mathcal{S}}\approx \tau_2^{-1/2}$ by Lemma \ref{lem:lip derivative} and Lemma \ref{lem:ia gammaa}.
		Observe that for $z\in\bf D$, the union of disks $D_j$, we have $|c_iz+d_i|\leq C_\Gamma \|\gamma_i\|_{\mathcal{S}}\leq C_\Gamma^2 \tau_2^{-1/2}$. This implies the denominator of \eqref{equ:regamma} is less than $C_\Gamma^{16}\tau_2^{-4}$ for $z\in\bf D$. Now $\gamma_{\bf d'}z_o$ is in $\bf D$ for $\bf d\in Z(\tau_1), \bf d\wa z_o$. Hence by the formula \eqref{equ:regamma} of $\Re(\gamma_1'z-\gamma_2'z)$, it is sufficient to prove that
		\begin{equation}
		\#\{\bf d\in Z(\tau_1)\ | \ \bf d\wa z,  |P(\gamma_{\bf d'}z_o)|\leq C_\Gamma^{16} \tau_2^{-3}\sigma \}\leq C\sigma^{\epsilon}\#Z(\tau_1).
		\end{equation}
		It suffices to prove that $h(P)$ is greater than $c_1\tau_2^{-3}\sigma^{1/2}$, where $c_1>0$ is a constant only depends on $C_\Gamma$, because then we can apply \eqref{equ:strong PS} to $P$ with $r=\sigma^{1/2}C_\Gamma^{16}/c_1$. 
	
	In order to prove $h(P)$ large, we will prove that for some choice of $z$ with bounded norm, the value $P(z)$ is large. WLOG, suppose that $|c_2|\geq |c_1|$. Take $$z=A-\frac{d_1}{c_1},$$
	 where $A$ will be determined later. Then
	\begin{align*}
		P(z)=\Re\left((c_2^2(A_1+A)^2-c_1^2A^2)\overline{c_1^2c_2^2(A+A_1)^2A^2}\right).
	\end{align*}
	We will take $|A|$ small to get rid of the minus and $|A|$ not too small to have a lower bound. Take $|A|=|A_1|/10$, then the angle of the above formula almost only depends on $A$. With a suitable choice of the angle of $A$, the value of $P(z)$ is almost the absolute value. As $|c_i|=\|\gamma_i\|_{\mathcal{S}}\approx \tau_2^{-1/2}$, we obtain
	\begin{equation*}
		P(z)\gg |c_2^2A_1^2||c_1^2c_2^2A_1^4|\gg \tau_2^{-3}|A_1|^6\geq\tau_2^{-3}\sigma^{1/2}.
	\end{equation*}
	Since the norm of $z$ is bounded by $C_\Gamma$, we see that $h(P)\gg \tau_2^{-3}\sigma^{1/2}$. The proof is complete.
\end{proof}

Combining Lemma \ref{lem:non real} and Lemma \ref{inf count}, we obtain Proposition \ref{BD}.

\subsection{Proof of Proposition \ref{lem:WNC}: uniform non-concentration}
We complete the proof of Proposition \ref{lem:WNC} in this subsection. The idea is the same with Proposition \ref{BD}: we find conditions on $\gamma_1,\gamma_2,\gamma_3$ such that the real polynomial $P(z)$ showing up in the determinant has reasonable large height $h(P)$. 

The following lemma is similar to Lemma \ref{lem:non real}, but much more involved. 
\begin{lem}\label{lem:non proj}
	Let $\tau_1,\tau_2>0$, $\sigma>\tau_1,\tau_2$ and $z_o\in\C $. Let $\gamma_i\in Z(C_\Gamma,\tau_2)$, $i=1,2,3$. If $A_1=\gamma_3^{-1}\infty-\gamma_1^{-1}\infty,A_2=\gamma_3^{-1}\infty-\gamma_2^{-1}\infty$ satisfy 
	\begin{equation}\label{equ:A1A2}
	|A_1|,\,|A_2|\geq \sigma^{1/128}
	\end{equation} 
	and for $z_3=\gamma_3^{-1}\infty$,
	\begin{equation}\label{equ:real}
			|\Re(\gamma_1'z_{3})-\Re(\gamma_2'z_{3})|\geq \tau_2\sigma^{1/8}, 
	\end{equation}
	 then 
	\begin{equation}\label{equ:non proj}
	\#\{\bf e\in Z(\tau_1)\ | \ \gamma_i\rightarrow \bf e\wa z_o,\, |\det(\gamma_1,\gamma_2,\gamma_3,\gamma_{\bf e'}z_o)|\leq \tau_2^2\sigma \}\leq C\sigma^{\epsilon}\#Z(\tau_1),
	\end{equation}
	where $\epsilon=\kappa_8/4$ in Lemma \ref{lem:strong PS}.
\end{lem}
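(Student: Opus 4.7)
The plan is to follow the strategy used for Lemma \ref{lem:non real}: rewrite the determinant as $P(z,\bar z)/D(z)$, where $D$ is a positive real-valued denominator and $P$ is a real polynomial (in the sense of \eqref{real polynomial}) of bounded degree; bound $D$ from above on $\bf D$; then lower-bound the height $h(P)$ by evaluating $P$ at a cleverly chosen point; and finally apply the counting form \eqref{equ:strong PS} of Lemma \ref{lem:strong PS}. Writing $\gamma_i'z=1/\alpha_i$ with $\alpha_i=(c_iz+d_i)^2$ and $\beta_i=\overline{\alpha_i}$, a short calculation starting from $\det=\Im(\bar w_1 w_2+\bar w_2 w_3+\bar w_3 w_1)$, where $w_i=\gamma_i'z$, gives
\begin{equation*}
\det(\gamma_1,\gamma_2,\gamma_3,z)=\frac{P(z,\bar z)}{D(z)},\qquad D(z)=\prod_{i=1}^3|c_iz+d_i|^4,
\end{equation*}
with $P=\Im\bigl(\alpha_1\alpha_3\beta_2\beta_3+\alpha_1\alpha_2\beta_1\beta_3+\alpha_2\alpha_3\beta_1\beta_2\bigr)$. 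Then $P$ is real-valued in the sense of \eqref{real polynomial} and has total degree at most $8$ in $z,\bar z$. Since $|c_iz+d_i|\ll \|\gamma_i\|_{\mathcal{S}}\approx\tau_2^{-1/2}$ whenever $z\in\bf D$ (by Lemma \ref{lem:norm c} and $\gamma_i\in Z(C_\Gamma,\tau_2)$), we have $D(z)\ll\tau_2^{-6}$, so the hypothesis $|\det|\leq\tau_2^2\sigma$ translates into $|P(z,\bar z)|\ll\tau_2^{-4}\sigma$.

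The heart of the argument is to establish $h(P)\gg\tau_2^{-4}\sigma^{3/4}$. I would evaluate $P$ at $z_*=z_3+h$, where $z_3=\gamma_3^{-1}(\infty)$ and $h\in\C$ has small modulus (of order $\sigma^{19/128}$) while $\arg h$ is left as a tuning parameter. Then $w_3=\gamma_3'(z_*)=1/(c_3h)^2$ has modulus of order $\tau_2/|h|^2$, and the hypothesis $|A_i|\geq\sigma^{1/128}$ together with a direct expansion gives $|w_i-\gamma_i'(z_3)|\ll\tau_2|h|/|A_i|^3$ for $i=1,2$; by the choice of $|h|$ this error is smaller than $|\Re(\gamma_1'(z_3)-\gamma_2'(z_3))|/2\geq\tau_2\sigma^{1/8}/2$, so $|w_2-w_1|\geq\tau_2\sigma^{1/8}/2$. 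Splitting $\det=\Im(\bar w_1w_2)+\Im((\bar w_2-\bar w_1)w_3)$ and choosing $\arg h$ so that the phase of $(\bar w_2-\bar w_1)w_3$ lies within $\pi/4$ of $\pm i$, the second term dominates the first (whose modulus is at most $|w_1||w_2|\ll\tau_2^2/(|A_1||A_2|)^2$), giving
\begin{equation*}
|\det(z_*)|\gg|w_2-w_1|\cdot|w_3|\gg\tau_2\sigma^{1/8}\cdot\tau_2/|h|^2.
\end{equation*}
Multiplying by $D(z_*)\approx\tau_2^{-6}|A_1|^4|A_2|^4|h|^4$ yields $|P(z_*)|\gg\tau_2^{-4}\sigma^{1/8}|A_1|^4|A_2|^4|h|^2\gg\tau_2^{-4}\sigma^{3/4}$. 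Since $|z_*|\leq C_\Gamma$, this forces $h(P)\gg\tau_2^{-4}\sigma^{3/4}$.

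Combining $|P(\gamma_{\bf e'}z_o)|\ll\tau_2^{-4}\sigma$ with the height bound gives $|P(\gamma_{\bf e'}z_o)|/h(P)\ll\sigma^{1/4}$ on the set to be counted, so the counting statement \eqref{equ:strong PS} applied to $P$ with $n=8$ and $r$ of order $\sigma^{1/4}$ delivers $\#\{\bf e:\ldots\}\ll\sigma^{\kappa_8/4}\#Z(\tau_1)$, which is exactly \eqref{equ:non proj}.

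The main obstacle is the height lower bound. Evaluating $P$ at points of $\supp(\mu)$ would only produce an upper estimate $|P(z)|\ll h(P)$. The creative idea is to evaluate \emph{outside} $\supp(\mu)$, close to the pole $z_3$ of $\gamma_3$, so that $|\gamma_3'(z_*)|$ is inflated and the geometric hypotheses \eqref{equ:A1A2} and \eqref{equ:real} can be converted into a quantitative lower bound for $|P(z_*)|$. The delicate bookkeeping is that $|h|$ must simultaneously be small enough for the approximation $w_i\approx\gamma_i'(z_3)$ to be valid and for $\Im(\bar w_1w_2)$ to be dominated by $\Im((\bar w_2-\bar w_1)w_3)$, yet large enough for the $|h|^2$ factor carried by $D(z_*)$ to keep $|P(z_*)|\gg\tau_2^{-4}\sigma^{3/4}$, so that Lemma \ref{lem:strong PS} delivers the claimed decay.
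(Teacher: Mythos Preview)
Your proposal follows the same overall strategy as the paper: write $\det=P/D$ with $P$ a real polynomial of degree $\le 8$, bound $D\ll\tau_2^{-6}$ on $\bf D$, lower-bound $h(P)$ by evaluating $P$ at a point $z_3+h$ near the pole of $\gamma_3$, and apply \eqref{equ:strong PS} with $n=8$, $r\asymp\sigma^{1/4}$. The difference is only in how the evaluation is organised. The paper takes $|A|=\sigma^{1/4}$, fixes $\arg A$ so that $\Re(c_3A)^2=0$, and expands the $3\times 3$ determinant along its last row, isolating $P_1=\Im(c_3A)^2\,\Re(\gamma_1'z-\gamma_2'z)\,|\gamma_1'z\gamma_2'z|^{-2}$; a Lipschitz estimate on the auxiliary polynomial $B(z)=\Re(\gamma_1'z-\gamma_2'z)|\gamma_1'z\gamma_2'z|^{-2}$ then feeds \eqref{equ:real} directly into a lower bound for $|P_1|$. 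You instead use the triangle-area identity $\det=\Im(\bar w_1w_2)+\Im((\bar w_2-\bar w_1)w_3)$ and a perturbative estimate $w_i\approx\gamma_i'(z_3)$, tuning $\arg h$ via $w_3=(c_3h)^{-2}$. Your route is a little more direct (it only uses $|\gamma_1'z_3-\gamma_2'z_3|\ge\tau_2\sigma^{1/8}$, not specifically the real part), while the paper's row expansion explains why the hypothesis \eqref{equ:real} is naturally about $\Re$.

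One genuine numerical slip: with $|h|=\sigma^{19/128}$ your perturbation bound is $|w_i-\gamma_i'(z_3)|\ll\tau_2|h|/|A_i|^3\le C\tau_2\sigma^{16/128}=C\tau_2\sigma^{1/8}$, which is \emph{the same order} as the quantity $\tau_2\sigma^{1/8}$ it is supposed to beat. You therefore cannot conclude $|w_2-w_1|\ge\tfrac12\tau_2\sigma^{1/8}$, nor that $\arg(w_2-w_1)$ is stable enough to justify your phase choice for $h$. The fix is painless: take $|h|=\sigma^{a/128}$ with any $19<a\le 36$ (for instance the paper's $a=32$, i.e.\ $|h|=\sigma^{1/4}$). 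Then the perturbation is $\ll\tau_2\sigma^{(a-3)/128}\ll\tau_2\sigma^{1/8}$ with room to spare, your phase argument goes through, and the final product $\tau_2^{-4}\sigma^{1/8}|A_1|^4|A_2|^4|h|^2\ge\tau_2^{-4}\sigma^{(24+2a)/128}$ is still $\gg\tau_2^{-4}\sigma^{3/4}$.
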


We first show how the above lemma will lead to Proposition \ref{lem:WNC}.
\begin{proof}[Proof of Proposition \ref{lem:WNC}]
	Let $\gamma_1=\gamma_{\bf a'\bf b'},\gamma_2=\gamma_{\bf a'\bf c'},\gamma_3=\gamma_{\bf a'\bf d'}$. By Lemma \ref{concatenation}, they are in $Z(C_\Gamma,\tau_2)$ with $\tau_2=\|\gamma_{\bf a}\|^{-2}\tau$.
	
	We have a dichotomy. If $\gamma_1,\gamma_2,\gamma_3$ satisfy the conditions in Lemma \ref{lem:non proj}, then the number of $\bf e$ is small, less than $\#Z(\tau_1)\sigma^\epsilon$.
	
	If not, the condition on $A_1,A_2$ can be dealt with Lemma \ref{inf count}. That is the number of $\bf b,\bf c,\bf d$ not satisfying \eqref{equ:A1A2} is small by Lemma \ref{inf count}.
	
	The main difficulty is to verify \eqref{equ:real}, but this can be dealt with Proposition \ref{BD}. Because for $\gamma_3=\gamma_{\bf a'\bf d'}$, then $$z_3=\gamma_{\bf a'\bf d'}^{-1}\infty=\gamma_{\bf{\bar d'}\bf{\bar{a}'}}\infty=\gamma_{(\bf{\bar d'})'}\gamma_{\bf{\bar{a}}}\infty,$$ where $\overline{\bf d'}\,\overline{\bf{a}'}=\overline{d_{n-1}}\cdots \overline{d_1}\overline{a_{m-1}}\cdots \bar{a_1}=\overline{d_{n-1}}\cdots \bar{d_2}\overline{a_{m}}\cdots \bar{a_1}=(\overline{\bf d'})'{\bf{\bar{a}}}$ and we have $\bf{\bar{d'}}\wa\bf{\bar{a}}$.
	Let $\bf f=\bar{\bf d'}$. The element $\bf f$ is not always in $Z(\tau)$. By Lemma \ref{lem:reversal}, it is in $Z(C_\Gamma,\tau)$. The number of $\bf b,\bf c,\bf d$ not satisfying \eqref{equ:real} is less than
	\[\#\{(\bf b,\bf c)\in Z(\tau),\bf f\in Z(C_\Gamma,\tau)\ | \ \bf a\wa \begin{matrix}\bf b \\ \bf c\end{matrix},\,\bf f\wa z,\,|\Re(\gamma_{\bf a'\bf b'}'(\gamma_{\bf f'}z))-\Re(\gamma_{\bf a'\bf c'}'(\gamma_{\bf f'}z))|\leq \tau_2\sigma^{1/8} \},\]
	where $z=\gamma_{\bar{\bf a}}\infty$ and $\tau_2=\|\gamma_{\bf a}\|^{-2}\tau$.
	Then by Lemma \ref{lem:zctau} and Proposition \ref{BD}, the proof is complete.
\end{proof}
It remains to prove Lemma \ref{lem:non proj}.
\begin{proof}[Proof of Lemma \ref{lem:non proj}]
	By the same argument as in the proof of Lemma \ref{lem:non real}, we have an upper bound of the denominator of $\det(\gamma_1,\gamma_2,\gamma_3,z)$ with $z\in \bf D$, which is less than $C_\Gamma^{24}\tau_2^{-6}$. Therefore it is enough to prove that
		\begin{equation*}
		\#\{\bf e\in Z(\tau_1)\ | \ \bf e\wa z_o, |P(\gamma_{\bf e'}z_o)|\leq C_\Gamma^{24}\tau_2^{-4}\sigma \}\leq C\sigma^{\epsilon}\#Z(\tau_1),
		\end{equation*}
		where the polynomial $P(z)$ is the numerator of $\det(\gamma_1,\gamma_2,\gamma_3,z)$, given by
		\[P(z)=\det\begin{pmatrix}
		\Re \overline{(c_1z+d_1)^2} & \Im \overline{(c_1z+d_1)^2} & |c_1z+d_1|^4\\
		\Re \overline{(c_2z+d_2)^2}& \Im \overline{(c_2z+d_2)^2} & |c_2z+d_2|^4\\
		\Re \overline{(c_3z+d_3)^2} & \Im \overline{(c_3z+d_3)^2}  & |c_3z+d_3|^4
		\end{pmatrix}. \]
		It suffices to prove that $h(P)$ is greater than $c_1\tau_2^{-4}\sigma^{3/4}$, where $c_1>0$ is a constant depending on $C_\Gamma$, because then we can apply Lemma \ref{lem:strong PS} \eqref{equ:strong PS} to $P$ with $r=\sigma^{1/4}C_\Gamma^{24}/c_1$. 
		
		In order to prove $h(P)$ large, we will prove that for some choice of $z$ with bounded norm, the value $P(z)$ is large. We take $z=A-\frac{d_3}{c_3}=A+z_3$, where $A$ will be determined later. Then
			\begin{equation}\label{equ:pzmatrix}			
			P(z)=-\det\begin{pmatrix}
			\Re  {\left(c_1(A_1+A)\right)^2} & \Im  {\left(c_1(A_1+A)\right)^2} & |c_1(A_1+A)|^4\\
			\Re  {\left(c_2(A_2+A)\right)^2}& \Im  {\left(c_2(A_2+A)\right)^2} & |c_2(A_2+A)|^4\\
			\Re  {(c_3A)^2} & \Im  {(c_3A)^2}  & |c_3A|^4
			\end{pmatrix}.
			\end{equation}
			We first fix the angle of $A$ such that $(c_3A)^2$ is an imaginary number, that is
			\begin{equation}\label{equ:reala}
			\Re {(c_3A)^2}=0.
			\end{equation}
			We let $|A|=\sigma^{1/4}$ and we claim that $|P(z)|\gg \tau_2^{-4}\sigma^{3/4}$.
			
			Now, we expand the determinant \eqref{equ:pzmatrix} with respect to the last line, using \eqref{equ:reala}, which gives
			\begin{equation}\label{equ:pz}
			P(z)=P_1+P_2,
			\end{equation}
			with
			\[P_1:=\Im (c_3A)^2\,\Re\left(\gamma_1'z-\gamma_2'z\right)\,|\gamma_1'z|^{-2}|\gamma_2'z|^{-2} \]
			and 
			\[ P_2:=|c_3A|^4\,\left(\Re  {\left(c_1(A_1+A)\right)^2}\,\Im  {\left(c_2(A_2+A)\right)^2} -	\Re  {\left(c_2(A_2+A)\right)^2}\,\Im  {\left(c_1(A_1+A)\right)^2}\right).\]
			Due to $\gamma_i\in Z(C_\Gamma,\tau_2)$, by Lemma \ref{lem:norm c} we know that $|c_1|,\,|c_2|\ll \tau_2^{-1/2}$. By $|A_1+A|,\,|A_2+A|\ll 1$, we obtain 
			\begin{equation}\label{equ:p2}
			|P_2|\ll |c_3A|^4\tau_2^{-2}.
			\end{equation}
			Let 
			\[ B(z)=\Re(\gamma_1'z-\gamma_2'z)|\gamma_1'z|^{-2}|\gamma_2'z|^{-2}, \]
			then $P_1=\Im(c_3A)^2B(z)$.
			The coefficients of $B$ are monomials of degree 6 on $c_1,d_1,c_2,d_2$. By Lemma \ref{lem:norm c}, we obtain  
			$$h(B)\leq\sup\{\|\gamma_1\|_{\mathcal{S}},\|\gamma_2\|_{\mathcal{S}} \}^6\ll C\tau_2^{-3}.$$
			 Due to $|A|=\sigma^{1/4}$, we know that
			\begin{equation}\label{equ:bzbz2}
			|B(z)-B(z_3)|\ll |z-z_3|h(B)\ll |z-z_3|\tau_2^{-3}= |A|\tau_2^{-3}= \sigma^{1/4}\tau_2^{-3}.
			\end{equation}
			Thanks to $|A_1|,|A_2|\geq \sigma^{1/128}$ \eqref{equ:A1A2}, we obtain $|\gamma_j'z_3|^{-1}\geq |c_j|^2|A_j|^2\gg \tau_2^{-1}\sigma^{2/128}$ for $j=1,2$. Combining with \eqref{equ:bzbz2} and \eqref{equ:real}, we obtain
			\begin{equation}\label{equ:bz}
			|B(z)|\geq |B(z_3)|-|B(z)-B(z_3)|\geq\sigma^\alpha\tau_2\sigma^{1/8}\tau_2^{-4}\sigma^{8/128}-\sigma^{-\alpha}\sigma^{1/4}\tau_2^{-3}\gg \tau_2^{-3}\sigma^{1/4}.
			\end{equation} 
			(Here we take $\alpha=1/64$. Due to $\sigma\leq 1/N$, we can take $N$ large enough such that $N^{\alpha}$ is greater than the constants that appeared, depending on  $C_\Gamma$.)
			Hence by \eqref{equ:pz}, \eqref{equ:p2} and \eqref{equ:bz}, we conclude that
			\begin{align*}
			|P(z)|&\geq |P_1|-|P_2|\geq |c_3A|^2|B(z)|-|c_3A|^4\tau_2^{-2}\\
			&\gg \tau_2^{-1}\sigma^{2/4}\tau_2^{-3}\sigma^{1/4}-\sigma^{-\alpha}\tau_2^{-2}\sigma^{4/4}\tau_2^{-2}=\tau_2^{-4}(\sigma^{3/4}-\sigma^{1-\alpha})\gg\tau_2^{-4}\sigma^{3/4}.
			\end{align*}
			This is what we need, then the bound $|z|\leq C_\Gamma$ implies $h(P)\gg \tau_2^{-4}\sigma^{3/4}$.
\end{proof}

\appendix
\gdef\thesection{Appendix A.}
	
\section{Exponential moment and Stationarity of Patterson-Sullivan measures. }
\begin{center} 
	Jialun LI
\end{center}

\gdef\thesection{\Alph{section}}

\def\slr{\rm{SL}(2,\mathbb R)}
\def\son{G}
\def\B{\bb B }
\def\S{\bb S}
\def\R{\bb R}
\def\P{\bb P}
\def\N{\bb N}
\def\H{\bb H}
\def\xmg{x^m_{\gamma}}
\def\limit{\Lambda(\Gamma)}


In this appendix, we give a construction of a random walk on a convex cocompact subgroup of $\rm{SO}_0(1,n)$, which has exponential moment and such that the associated Patterson-Sullivan measure of the convex cocompact group can be realized as a stationary measure.

Let $\Gamma$ be a convex cocompact subgroup of $\son=\rm{SO}_0(1,n)$ ($n\geq 2$) and $\mu$ be an associated Patterson-Sullivan measure on the boundary at infinity $\partial \H^n$. Let $\nu$ be a Borel probability measure on $G$. We call $\mu$ a $\nu$-stationary measure or a Furstenberg measure if 
$$\mu=\nu*\mu:=\int_G\gamma_*\mu\,\dd\nu(g).$$
In this appendix, we provide a construction of a measure $\nu$ on $\Gamma$ such that $\mu$ is $\nu$-stationary and $\nu$ has a finite exponential moment, that is there exists $\epsilon_e>0$ such that $\int_G\|\gamma\|^{\epsilon_e}\dd\nu(\gamma)<\infty.$

For a measure $\nu$ on $\son$, we let $\Gamma_\nu$ be the subgroup generated by the support of $\nu$.
\begin{thm}\label{thm:expmom}
	Let $\Gamma$ be a convex cocompact subgroup of $\son$, and let $\mu$ be the Patterson-Sullivan measure on the limit set $\Lambda_{\Gamma}$. Then there exists a probability measure $\nu$ on $\Gamma$ with a finite exponential moment such that $\mu$ is $\nu$-stationary and $\Gamma_\nu=\Gamma$. 
\end{thm}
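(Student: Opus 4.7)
I would realize $\mu$ as the stationary measure of an explicit random walk on $\Gamma$ built from a Schottky/Bowen--Series symbolic coding of the limit set, exploiting the identification of $\mu$ with the Gibbs equilibrium state of the associated transfer operator.

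First I would reduce to a Schottky-type Markov coding of $\Lambda_\Gamma$: for a convex cocompact subgroup of $\mathrm{SO}_0(1,n)$ (after possibly passing to a finite-index Schottky subgroup, which does not affect $\mu$ by Roblin's theorem already invoked in Remark \ref{rema}) one obtains a finite alphabet $\mathcal{A}$, group elements $\gamma_i\in\Gamma$ for $i\in\mathcal{A}$, Markov cylinders $R_i\subset\Lambda_\Gamma$, and a piecewise-M\"obius expanding map $T:\Lambda_\Gamma\to\Lambda_\Gamma$ with $T|_{R_i}=\gamma_i^{-1}$. Admissible words $\alpha=a_1\cdots a_n$ then label cylinders $R_\alpha$ and group elements $\gamma_\alpha=\gamma_{a_1}\cdots\gamma_{a_n}$, exhausting $\Gamma$. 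The $\Gamma$-quasi-invariance $\gamma_*\mu=|(\gamma^{-1})'|_{\mathbb{S}^{n-1}}^{\delta}\mu$ together with the Schottky pairing $\gamma_i(\Lambda_\Gamma\setminus R_{\bar i})=R_i$ yields the transfer-operator identity $\mathcal{L}^*\mu=\mu$, where $\mathcal{L}f(\xi)=\sum_{T\eta=\xi}|T'(\eta)|^{-\delta}f(\eta)$; as consequences one has the shadow estimate $\mu(R_\alpha)\asymp|\gamma_\alpha'(x)|^{\delta}\asymp e^{-\delta d(o,\gamma_\alpha o)}$ (for $x$ in an appropriate regular region) and the partition identity $\sum_{|\alpha|=L}\mu(R_\alpha)=1$ for every $L\ge 1$.

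Then, for a small parameter $q\in(0,1)$, I would set
\[\nu(\gamma_\alpha):=q(1-q)^{|\alpha|-1}\mu(R_\alpha)\qquad\text{for each admissible word }\alpha.\]
The two identities $\sum_{L\ge 1}q(1-q)^{L-1}=1$ and $\sum_{|\alpha|=L}\mu(R_\alpha)=1$ make $\nu$ a probability measure on $\Gamma$. Since $\nu$ charges every length-one generator $\gamma_i$, one gets $\Gamma_\nu=\Gamma$ immediately. Using the exponential growth bound $\|\gamma_\alpha\|\le K^{|\alpha|}$ for some $K=K(\Gamma)>1$, for any $\epsilon>0$ with $(1-q)K^\epsilon<1$ one has
\[\int_\Gamma\|\gamma\|^{\epsilon}\,d\nu(\gamma)\le \sum_{L\ge 1}q(1-q)^{L-1}K^{\epsilon L}<\infty,\]
which gives finite exponential moment.

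The crux will be verifying $\nu*\mu=\mu$. Using $(\gamma_\alpha)_*\mu=|(\gamma_\alpha^{-1})'|^{\delta}\mu$, the stationarity equation at a fixed length $L$ reads $\sum_{|\alpha|=L}\mu(R_\alpha)|(\gamma_\alpha^{-1})'(\eta)|^{\delta}=1$ for $\mu$-a.e.\ $\eta$. Combining the iterated transfer-operator identity $(\mathcal{L}^L)^*\mu=\mu$ with the bounded-distortion estimate $\mu(R_\alpha)\asymp|\gamma_\alpha'(\cdot)|^{\delta}$ reduces this to a tautological partition identity for cylinders of length $L+|\beta|$ when $\eta\in R_\beta$, and averaging over $L$ against the geometric weights $q(1-q)^{L-1}$ gives $\nu*\mu=\mu$. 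The hard part will be promoting the $\asymp$ bounded-distortion comparisons to exact pointwise equalities: I would handle this either by modifying $\nu$ by a bounded, position-independent cocycle correction (replacing $\mu(R_\alpha)$ by an appropriate integrated density, such as a renormalization of $\int_{\Lambda_\Gamma}|\gamma_\alpha'|^{\delta}\,d\mu$), or by a Perron--Frobenius fixed-point argument in the Banach space of probability measures on $\Gamma$ with finite exponential moment, invoking the spectral gap of $\mathcal{L}$ for existence and contraction. In either correction the geometric decay in $L$ is preserved, so the finite exponential moment and the generation property $\Gamma_\nu=\Gamma$ survive.
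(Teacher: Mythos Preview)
There are two genuine gaps.

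First, the reduction to a Schottky coding via a finite-index subgroup fails in general: a convex cocompact subgroup of $\mathrm{SO}_0(1,n)$ need not be virtually free. Cocompact lattices (and, for $n\ge 3$, quasi-Fuchsian surface groups) are convex cocompact but have cohomological dimension $\ge 2$, so they admit no finite-index free subgroup and hence no Schottky coding in the sense you use. Roblin's result on invariance of $\mu$ under finite covers does not help here, because the subgroup you want simply does not exist. Even when it does, you would still have to lift $\nu$ from the subgroup to a measure with $\Gamma_\nu=\Gamma$, which you do not address.

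Second, and more seriously, even in the genuine Schottky case the stationarity step is precisely the content of the theorem, and your proposed $\nu$ does not satisfy it. The equation you must verify is $\sum_\alpha \nu(\gamma_\alpha)\, f_{\gamma_\alpha}(\xi)=1$ \emph{pointwise} on $\Lambda_\Gamma$, where $f_\gamma=\dd(\gamma_*\mu)/\dd\mu$. With your weights $\nu(\gamma_\alpha)=q(1-q)^{|\alpha|-1}\mu(R_\alpha)$ this reduces, at each length $L$, to $\sum_{|\alpha|=L}\mu(R_\alpha)\,f_{\gamma_\alpha}(\xi)=1$, which only holds up to bounded multiplicative constants, not exactly. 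Your first proposed correction, replacing $\mu(R_\alpha)$ by $\int f_{\gamma_\alpha}\,\dd\mu$, is vacuous: $\int f_\gamma\,\dd\mu=(\gamma_*\mu)(\Lambda_\Gamma)=1$ for \emph{every} $\gamma$, so this assigns equal mass to all group elements. Your second suggestion, a Perron--Frobenius argument ``in the Banach space of probability measures on $\Gamma$'', is not well-posed: the spectral gap of $\mathcal{L}$ lives on functions on $\Lambda_\Gamma$, and you have not specified any contraction on candidate $\nu$'s whose fixed point would solve the problem. The appendix explicitly notes that Lalley's closed-form construction---essentially what you wrote down---works only for Schottky \emph{semigroups}. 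For groups the paper instead runs an adaptive Connell--Muchnik-type iteration: starting from the remainder $R_0\equiv 1$, at stage $n$ one subtracts a controlled positive multiple of $P_{n+1}R_n=\sum_{\gamma\in S_{n+1}}R_n(\eta_\gamma)\,r_\gamma^\delta f_\gamma$ (with $S_{n+1}$ an annular shell $\{\gamma:\,r_\gamma\approx e^{-4C_0(n+1)}\}$), and proves via a Lipschitz comparison lemma that $R_n\to 0$ exponentially on $\Lambda_\Gamma$. This exponential decay is what simultaneously delivers the exact identity $\sum_\gamma\nu(\gamma)f_\gamma=1$ and the finite exponential moment; it is not recoverable from a fixed closed-form ansatz plus a bounded correction.
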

\begin{rem}
	\begin{enumerate}
	\item In  \cite{lalley1986gibbs} and \cite{lalley1989renewal}, Lalley announced the existence of such a $\nu$ for Schottky groups. But Lalley's proof only works for Schottky semigroups. In \cite{connell2007harmonicity}, the authors proved the existence of such a $\nu$ without the moment condition in the geometrically finite case. Our construction combines the methods of Connell-Muchnik and Lalley.
	
	\item For cocompact lattices, the construction is due to Furstenberg \cite{furstenberg1963poisson}. When the Hausdorff dimension $\delta_{\Gamma}\geq(n-1)/2$, the construction is due to Sullivan \cite{Sullivan2}. Their methods are based on the discretization of Brownian motions on hyperbolic spaces.
	
	\item On the other hand, for the geometrically finite with cusps case, it is impossible to find such a measure $\nu$ with exponential moment, because the finite exponential moment condition is impossible for noncompact lattice $\Gamma$ in $\slr$. It is shown that if $\nu$ is a measure on $\Gamma$ with a finite first moment, then the $\nu$-stationary measure $\mu$ is singular with respect to the Lebesgue measure (This fact is due to Guivarc'h and Le Jan \cite{guivarch1993winding}. See also \cite{deroin2009circle} and \cite{blachere2011harmonic}).
	
	\item The result for $\rm{SO}_0(1,2)$ has already been announced and used in \cite{Li1}.
	\end{enumerate}
\end{rem}

\subsection{Basic properties and cover}
We will use the ball model for the hyperbolic $n$-space and fix the origin point $o$ in $X=\B^n=\{x\in\R^n| \|x\|^2=x_1^2+\cdots x_n^2<1 \}$. The hyperbolic riemannian metric $d$ at $x$ in $X$ is given by 
\[\frac{4\dd x^2}{(1-\|x\|^2)^2}. \]
The infinity $\partial X$ is isomorphic to the sphere $\S^{n-1}$. For $x,y$ in $X$ and $C>0$, let $\cal O_{C}(x,y)$ be the shadow of a ball centred at $y$ of radius $C$ seen from $x$, that is, the set of $\xi$ in the boundary $\partial X$ such that the geodesic ray issued from $x$ with limit point $\xi$ intersects the ball $B(y,C)$.

Let $hull(\Lambda_{\Gamma})$ be the convex hull of the limit set $\Lambda_{\Gamma}$ in $X\cup\partial X$. Without loss of generality, we suppose that $o$ is in the convex hull.
Since the group $\Gamma$ is convex cocompact, the quotient $C(\Gamma)=\Gamma\backslash (hull(\Lambda_{\Gamma})\cap X)$ is compact. Let 
\begin{equation}\label{equ:c0}
C_0=6\max\{\text{the diameter of the quotient set }C(\Gamma),2 C_1 ,2, \log C_2\},
\end{equation}
where $C_1, C_2$ are defined in Lemma \ref{lem:shadow} and Lemma \ref{lem:fg} respectively.

For an element $\gamma\in G$, we write $\xmg$ for the intersection of the ray $o,\gamma^{-1}o$ with the boundary $\partial X$. For $\gamma\in\Gamma$, let $\kappa(\gamma)=d(o,\gamma o)$ and let $r_{\gamma}=e^{-\kappa(\gamma)}$. Set $B_\gamma=\cal O_{C_0}(o,\gamma^{-1}o)$.   

Recall the Busemann function and the Patterson-Sullivan measure. Recall \eqref{equ:quasi-invariant}, that is for $\gamma$ in $\Gamma$ and $\xi\in\partial X$, we have
\begin{align}
\frac{\dd \gamma_*\mu}{\dd\mu}(\xi)=e^{-\delta B_\xi(\gamma^{-1}o,o)}.
\end{align}
Let $f_ \gamma(\xi)=e^{-\delta B_\xi(\gamma^{-1}o,o)}$. For the stationary equation $\sum_{\gamma\in\Gamma}\nu(\gamma)\gamma_*\mu=\mu$, it is sufficient to verify
\begin{equation}\label{equ:stationary}
\sum_{\gamma\in\Gamma}\nu(\gamma)f_{\gamma}=1 \text{ on } \Lambda_{\Gamma}.
\end{equation}
Now we start to establish some properties of $f_{\gamma}$.

Recall that for two real functions $f$ and $g$, we write $f\ll g$ if there exists a constant $C>0$ only depending on the group $\Gamma$ such that $f\leq C g$. We write  $f\approx g$ if $f\ll g\ll f$.

\begin{lem}[Sullivan]\label{lem:shadow}
	There exists $C_1>0$ such that the following holds. For any $C\geq C_1$ there exists $C'$ such that for all $\gamma$ in $\Gamma$
	\begin{align*}
	\frac{1}{C'} r_{\gamma}^\delta\leq \mu(\cal O_{C}(o,\gamma^{-1}o))\leq C' r_{\gamma}^\delta.
	\end{align*}
\end{lem}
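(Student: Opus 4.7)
The statement is the classical Sullivan shadow lemma for a convex cocompact group in hyperbolic space. My approach would be the standard one: combine the quasi-invariance of the Patterson--Sullivan measure recorded in \eqref{equ:quasi-invariant} with a Busemann-function estimate valid inside any shadow, and then use the compactness of the quotient $C(\Gamma)=\Gamma\backslash(hull(\Lambda_\Gamma)\cap X)$ to make the resulting lower bound uniform in $\gamma$.

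The geometric input I would first establish is that for every $\xi\in\cal O_C(o,\gamma^{-1}o)$,
\[
\bigl|B_\xi(\gamma^{-1}o,o)+\kappa(\gamma)\bigr|\leq 2C.
\]
Indeed, picking $w$ far along the ray from $o$ to $\xi$, which by definition passes within $C$ of $\gamma^{-1}o$, the triangle inequality gives $d(\gamma^{-1}o,w)-d(o,w)=-\kappa(\gamma)+O(C)$. Consequently, on this shadow the Radon--Nikodym factor $e^{-\delta B_\xi(\gamma^{-1}o,o)}$ is comparable to $r_\gamma^{-\delta}$, with multiplicative error at most $e^{2C\delta}$.

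Next, using the identity $\gamma\cdot\cal O_C(o,\gamma^{-1}o)=\cal O_C(\gamma o,o)$, the quasi-invariance formula \eqref{equ:quasi-invariant} gives
\[
\mu\bigl(\cal O_C(\gamma o,o)\bigr)=\int_{\cal O_C(o,\gamma^{-1}o)} e^{-\delta B_\xi(\gamma^{-1}o,o)}\,d\mu(\xi).
\]
Combining with the Busemann estimate, this reduces the lemma to proving the uniform bounds
\[
c(C)\leq \mu\bigl(\cal O_C(\gamma o,o)\bigr)\leq 1
\]
for every $\gamma\in\Gamma$ and every $C\geq C_1$.

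The upper bound is automatic since $\mu$ is a probability measure. The lower bound is the only real obstacle and is where convex cocompactness enters crucially. Since both $o$ and $\gamma o$ lie in $hull(\Lambda_\Gamma)$ and the hull is geodesically convex, the geodesic ray from $\gamma o$ through $o$ accumulates at a limit point $\xi_\gamma\in\Lambda_\Gamma$. Using the compactness of $C(\Gamma)$, one can cover $\Lambda_\Gamma$ by finitely many shadows of orbit points $\gamma_i o$ of depth bounded by the diameter of $C(\Gamma)$; choosing $C_1$ larger than that diameter plus an absolute constant forces at least one such fixed-depth shadow to be contained in $\cal O_C(\gamma o, o)$, which therefore carries a uniform positive amount of $\mu$-mass. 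Quantifying this uniform positivity --- essentially the standard Sullivan/Roblin argument --- is the technical heart of the proof, and once it is in place the two displayed comparisons immediately yield the lemma.
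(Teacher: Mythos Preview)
Your sketch is the standard Sullivan--Roblin argument and is correct in outline. The paper itself does not prove this lemma at all: it simply refers to \cite[Page~10]{roblin2003ergodicite}. So there is nothing to compare against except the cited source, and what you have written is precisely the argument one finds there (Busemann estimate on the shadow, quasi-invariance to transfer to $\cal O_C(\gamma o,o)$, then a uniform lower bound on the latter via compactness).

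One small point worth tightening in your lower-bound paragraph: the cleanest way to get $\inf_\gamma \mu(\cal O_C(\gamma o,o))>0$ is by contradiction and compactness of $\overline{X}$. If $\mu(\cal O_C(\gamma_n o,o))\to 0$, pass to a subsequence with $\gamma_n o\to\eta\in\Lambda_\Gamma$; the shadows $\cal O_C(\gamma_n o,o)$ then eventually contain any fixed compact subset of $\cal O_C(\eta,o)$, which has positive $\mu$-measure since $\mu$ is non-atomic. Your phrasing in terms of a finite cover by fixed-depth shadows is also fine, but it takes slightly more care to verify the inclusion you assert, whereas the limiting argument is immediate.
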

For the proof please see \cite[Page 10]{roblin2003ergodicite}.
\begin{lem}[Triangle rule]
	Let $ABC$ be a geodesic triangle in $X$. Let $\alpha,\beta,\gamma$ be the three angle of $A,B,C$ and let $a,b,c$ be the length of $BC,CA,AB$. Then 
	\[\frac{\sin\alpha}{\sinh a}=\frac{\sin\beta}{\sinh b}=\frac{\sin\gamma}{\sinh c}. \]
\end{lem}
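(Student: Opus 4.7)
The plan is to reduce the hyperbolic law of sines to the right-triangle case by dropping an altitude. I would let $H$ denote the foot of the perpendicular from $C$ to the line through $A$ and $B$, and write $h=d(C,H)$. This splits triangle $ABC$ into two right-angled sub-triangles $AHC$ and $BHC$, each with a right angle at $H$ and sharing the common leg $CH$ of length $h$.

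For each of these right triangles I would invoke the classical hyperbolic right-triangle identity: in a right triangle with hypotenuse of length $\ell$ and an acute angle $\phi$ opposite a leg of length $m$, one has
\[
\sin\phi=\frac{\sinh m}{\sinh\ell}.
\]
Applied to $AHC$ (hypotenuse $AC$ of length $b$, angle $\alpha$ at $A$, opposite leg $CH$) this gives $\sinh h=\sinh b\,\sin\alpha$; applied to $BHC$ (hypotenuse $BC$ of length $a$, angle $\beta$ at $B$) it gives $\sinh h=\sinh a\,\sin\beta$. Equating the two expressions yields $\sin\alpha/\sinh a=\sin\beta/\sinh b$. Repeating the construction by dropping the altitude from $A$ (or $B$) identifies the third ratio $\sin\gamma/\sinh c$ with the same common value, completing the proof.

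The actual content sits in the right-triangle identity, which I would derive from the hyperbolic law of cosines
\[
\cos\gamma=\frac{\cosh a\cosh b-\cosh c}{\sinh a\sinh b}.
\]
This identity itself follows from a direct computation in a convenient model (e.g.\ in the ball $X=\mathbb{B}^n$, one can use an isometry to place $C$ at the origin and compute $\cosh c$ from the Poincar\'e metric). Specialising to $\gamma=\pi/2$ gives the hyperbolic Pythagoras relation $\cosh c=\cosh a\cosh b$; substituting this into the law of cosines at the angle $\alpha$ and simplifying produces $\cos^{2}\alpha$ and hence $\sin\alpha=\sinh a/\sinh c$.

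The only subtlety — and the main bookkeeping step — is the case where the foot $H$ lies outside the segment $AB$, so that $ABC$ is obtuse at $A$ or $B$. In that event the decomposition becomes a difference rather than a sum of right triangles, but the identity $\sinh h=\sinh b\sin\alpha$ is insensitive to this because only $\sin\alpha$ appears and $\sin(\pi-\alpha)=\sin\alpha$; the conclusion therefore goes through unchanged. No other case distinction is needed, and the proof is complete.
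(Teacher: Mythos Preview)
Your proof is correct and follows the standard altitude-drop argument for the hyperbolic law of sines. The paper itself does not supply a proof of this lemma at all: it simply cites Beardon's textbook (\emph{The geometry of discrete groups}, p.~148) and moves on. So there is nothing to compare at the level of technique; your write-up supplies genuine content where the paper outsources it. The derivation you give (reduce to right triangles via an altitude, then obtain the right-triangle identity from the hyperbolic law of cosines specialised to $\gamma=\pi/2$) is exactly the classical route and is essentially what one finds in Beardon as well.
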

See for example \cite[Page 148]{beardon1983discrete}. For $\xi$ in $\partial X$ and $t\in \R_+$, let $\xi_t$ be the point in the geodesic ray $o\xi$ with distance $t$ to $o$. We define distances on the boundary, that is the visual distance: for $x\in X$ and $\xi,\xi'$ on $\partial X$ $$d_x(\xi,\xi')=\lim_{t\rightarrow+\infty}e^{\frac{1}{2}(-d(x,\xi_t)-d(x,\xi'_t)+d(\xi_t,\xi'_t))}.$$
We fix the visual distance at the origin $o$, i.e. $d_o$, on the boundary $\partial X$.
\begin{lem}\label{lem:radius}
	The distance $d_o$ is the sinuous of the angle, that is for $\xi,\xi'$ in $\partial X$ we have $d_o(\xi,\xi')=sin\frac{1}{2}\angle\xi o\xi'$. The shadow $B_{\gamma}$ is of radius $\frac{\sinh C_0}{\sinh \kappa(\gamma)}$.
\end{lem}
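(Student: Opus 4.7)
The plan is to treat the two assertions separately, using standard hyperbolic trigonometry. Both are essentially computations in model geometry, so the main technical content is organizational: reducing them to the right triangle/law-of-cosines formulas.

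For the formula $d_o(\xi,\xi')=\sin(\tfrac{1}{2}\angle\xi o\xi')$, I would fix $\theta:=\angle\xi o\xi'$ and apply the hyperbolic law of cosines in the isoceles geodesic triangle with vertex $o$ and opposite vertices $\xi_t,\xi'_t$:
\begin{equation*}
\cosh d(\xi_t,\xi'_t)=\cosh^2 t-\sinh^2 t\cos\theta=1+2\sinh^2 t\,\sin^2(\theta/2).
\end{equation*}
Letting $t\to\infty$ and taking logarithms gives $d(\xi_t,\xi'_t)-2t\to 2\log\sin(\theta/2)$. Since $d(o,\xi_t)=d(o,\xi'_t)=t$, plugging into the definition of $d_o$ gives exactly $d_o(\xi,\xi')=e^{\log\sin(\theta/2)}=\sin(\theta/2)$. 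This is a two-line computation once one recalls the identity $\cosh^2 t-\sinh^2 t\cos\theta=1+2\sinh^2 t\sin^2(\theta/2)$, so no real difficulty is expected here.

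For the radius of $B_\gamma=\mathcal{O}_{C_0}(o,\gamma^{-1}o)$, I would argue that by rotational symmetry about the geodesic $[o,\gamma^{-1}o]$, the shadow is a spherical cap whose boundary consists of points $\xi\in\partial X$ for which the geodesic ray from $o$ to $\xi$ is tangent to the closed ball $\overline{B}(\gamma^{-1}o,C_0)$. Letting $P$ denote a tangent point, the triangle $o\,\gamma^{-1}o\,P$ has a right angle at $P$, hypotenuse of length $\kappa(\gamma)=d(o,\gamma^{-1}o)$, and the side opposite to the angle $\alpha$ at $o$ is the radius $C_0$. Applying the triangle rule (Lemma before) yields
\begin{equation*}
\sin\alpha=\frac{\sinh C_0}{\sinh\kappa(\gamma)}.
\end{equation*}
This is the sine of the angular half-width of the cap; by the first part, the visual radius is $\sin(\alpha/2)$, and since $\sin\alpha=2\sin(\alpha/2)\cos(\alpha/2)$ with $\cos(\alpha/2)\in[\tfrac{1}{\sqrt{2}},1]$ (note $\alpha\leq\pi/2$ because $o\in\mathrm{hull}(\Lambda_\Gamma)$ and $C_0$ is comparable to the diameter of $C(\Gamma)$, so $\sinh C_0 \ll \sinh\kappa(\gamma)$ once $\gamma\neq e$ is outside a finite set, and in any case the case of bounded $\kappa(\gamma)$ is handled separately by adjusting the constant), we have $\sin(\alpha/2)\approx\sin\alpha=\sinh C_0/\sinh\kappa(\gamma)$. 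This gives the claimed radius in the $\approx$ sense used throughout the appendix.

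The only potential subtlety is the comparison between $\sin\alpha$ and $\sin(\alpha/2)$, which introduces a universal constant factor; since all radius estimates in the appendix are used up to the $\approx$ relation, this is harmless. If one wanted the statement on the nose, the formula $\sin\alpha=\sinh C_0/\sinh\kappa(\gamma)$ should really be read as the content of the lemma, with "radius" meaning the sine of the angular half-width. I do not expect any serious obstacle: both parts are textbook hyperbolic trigonometry, and the only care needed is in matching conventions for the visual distance and for what "radius of the shadow" means.
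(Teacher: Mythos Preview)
Your proposal is correct, and for the second assertion your argument is identical to the paper's: both take the tangent point, form the right triangle, and apply the law of sines (the ``triangle rule'') to get $\sin\alpha=\sinh C_0/\sinh\kappa(\gamma)$.

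For the first assertion you use the hyperbolic law of cosines on the isoceles triangle $o\xi_t\xi'_t$, whereas the paper instead drops a perpendicular from $o$ to the midpoint $p$ of the segment $\xi_t\xi'_t$ and applies the law of sines to the resulting right triangle $o\,p\,\xi_t$, getting $\sin\theta=\sinh(d(\xi_t,\xi'_t)/2)/\sinh t$ directly. Both are one-line textbook computations; yours has the minor advantage of not needing to observe that the perpendicular from $o$ bisects both the angle and the opposite side. Your observation about $\sin\alpha$ versus $\sin(\alpha/2)$ is also apt: the paper's proof in fact only computes $\sin\alpha$ and calls that the radius, and indeed the lemma is only ever invoked up to the $\approx$ relation, so your reading is the right one.
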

\begin{proof}
	Let $p$ be the midpoint of $\xi_t,\xi'_t$. Then $op$ is orthogonal to the geodesic $\xi_t\xi'_t$. Let $\theta$ be the half of the angle $\xi o\xi'$. Hence by triangle rule
	\[\frac{\sin\pi/2}{\sinh d(o,\xi_t)}=\frac{\sin\theta}{\sinh (d(\xi_t,\xi'_t)/2)}. \]
	Therefore
	\[\sin\theta=\frac{\sinh (d(\xi_t,\xi'_t)/2)}{\sinh d(o,\xi_t)}. \]
	The function $\sinh s$ is almost $e^s/2$ when $s$ is large. When $t$ tends to infinite, we obtain
	\[\sin\theta=\lim_{t\rightarrow+\infty}e^{d(\xi_t,\xi'_t)/2-d(o,\xi_t)}=d_o(\xi,\xi'). \]
	
	Let $q$ be the tangent point of the ball $B(\gamma^{-1}o,C_0)$ with a geodesic ray starting from $o$. Then by triangle rule
	\[ \sin\angle qo(\gamma^{-1}o)=\sinh C_0/\sinh d(o,\gamma^{-1}o) .\]
	The proof is complete.
\end{proof}
We need a Lipschitz property of the Busemann function as a function on $\partial X$.
\begin{lem}\label{lem:coclip}
	For $\xi,\xi'$ in $\partial X$ and $x,y$ in $X$ with $d_x(\xi,\xi')\leq e^{-d(x,y)-2}$, we have
	\[|B_\xi(x,y)-B_{\xi'}(x,y)|\leq 32 e^{d(x,y)/2}d_x(\xi,\xi')^{1/2}. \]
\end{lem}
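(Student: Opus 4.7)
The plan is to reduce to $x = o$ by an isometry of the ball model and then use the explicit formula for the Busemann function at the origin, converting the inequality into a quantitative bound on a logarithm.

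First I would pick an isometry $g$ taking $x$ to $o$. The Busemann cocycle relation gives $B_{g\xi}(gx, gy) = B_\xi(x, y)$, while $d_x(\xi, \xi') = d_o(g\xi, g\xi')$ and $d(x,y) = d(o, gy)$ are isometry-invariant, so it suffices to treat $x = o$. Setting $R := d(o, y)$, we have $|y| = \tanh(R/2)$, hence $|y - \xi| \geq 1 - |y| = 2/(e^R+1) \geq e^{-R}$, and similarly for $|y - \xi'|$.

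Second, the explicit formula $B_\xi(o, y) = \log\frac{1 - |y|^2}{|y - \xi|^2}$ gives
\begin{equation*}
B_\xi(o, y) - B_{\xi'}(o, y) = 2\log\frac{|y - \xi'|}{|y - \xi|}.
\end{equation*}
By Lemma \ref{lem:radius}, $|\xi - \xi'| = 2 d_o(\xi, \xi')$, so (writing $\epsilon := d_o(\xi, \xi')$) the triangle inequality on Euclidean norms yields $\bigl||y - \xi'| - |y - \xi|\bigr| \leq 2\epsilon$; combined with the lower bound $|y - \xi| \geq e^{-R}$ this makes the relative perturbation at most $2\epsilon e^{R}$. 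The hypothesis $\epsilon \leq e^{-R-2}$ forces this to be $\leq 2e^{-2} < 1/2$, so the elementary bound $|\log(1+t)| \leq 2|t|$ for $|t|\leq 1/2$ produces $|B_\xi(o,y) - B_{\xi'}(o,y)| \leq 8\epsilon e^{R}$.

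Finally, I would convert this linear-in-$\epsilon$ bound into the square-root form stated in the lemma by writing $8 \epsilon e^R = 8 \epsilon^{1/2} (\epsilon^{1/2} e^R)$ and using $\epsilon^{1/2} \leq e^{-(R+2)/2}$ to get $\epsilon^{1/2} e^R \leq e^{R/2 - 1}$, so that
\begin{equation*}
|B_\xi(o,y) - B_{\xi'}(o,y)| \leq 8 e^{-1} e^{R/2} \epsilon^{1/2} \leq 32 e^{R/2} d_o(\xi, \xi')^{1/2}.
\end{equation*}
The argument is essentially routine once the reduction to $x = o$ is made; the only nontrivial input is verifying the explicit Busemann formula at $o$, which follows by letting $y$ run along the Euclidean diameter ending at $\xi$ (where $d(o,y) = \log\frac{1+|y|}{1-|y|}$) and then extending by the cocycle identity. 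An intrinsic alternative via the hyperbolic law of cosines applied to the triangle $o, \xi_t, \xi'_t$ in the limit $t \to \infty$ would work too, but the ball-model computation is cleaner. The slight slack in converting the sharper linear bound to the stated square-root form is what forces the specific constant $32$ and the exponent $e^{R/2}$ rather than $e^R$.
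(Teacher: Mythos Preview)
Your argument is correct. The reduction to $x=o$ by an isometry is legitimate since both sides are invariant, the explicit Busemann formula $B_\xi(o,y)=\log\frac{1-|y|^2}{|y-\xi|^2}$ in the ball model is standard and you verify it on the ray, the lower bound $|y-\xi|\ge 1-|y|\ge e^{-R}$ and the Euclidean identity $|\xi-\xi'|=2d_o(\xi,\xi')$ from Lemma~\ref{lem:radius} are both fine, and the final conversion from the linear bound $8\epsilon e^R$ to the stated square-root bound via $\epsilon^{1/2}\le e^{-R/2-1}$ is clean.

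The paper does not actually write out a proof: it only records that the statement is implicitly contained in the proof of \cite[Proposition 3.5]{paulinequilibrium}. That reference treats equilibrium states in variable negative curvature and argues intrinsically via comparison geometry (CAT$(-1)$-type estimates on Gromov products and Busemann cocycles), which is more general but also heavier. Your route exploits the explicit ball model available for $\mathbb{H}^n$ and is therefore shorter and entirely self-contained; the price is that it does not generalize beyond constant curvature, which is irrelevant here since the appendix works exclusively in $\mathrm{SO}_0(1,n)$.
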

This lemma is implicitly contained in the proof of \cite[Proposition 3.5]{paulinequilibrium}. We summarize the properties of $f_{\gamma}$ in the following lemma
\begin{lem}\label{lem:fg} 
	Let $\gamma$ be an element in $\Gamma$.
	
	1)Let $\eta$ be a point in $B_{\gamma}$. Then we have 
	\begin{equation}\label{equ:yxgm}
	f_{\gamma}(\eta)\leq r_{\gamma}^{-\delta}=f_{\gamma}(\xmg)\leq e^{2C_0\delta} f_{\gamma}(\eta). 
	\end{equation}
	
	2)There exists $C_2>0$. If $\xi,\eta\in\partial X$ satisfy $d_o(\xi,\eta)\leq r_{\gamma}/e^2$, then we have
	\begin{equation}\label{equ:fglip}
	|f_ \gamma(\xi)/f_ \gamma(\eta)-1|\leq C_2 d_o(\xi,\eta)^{1/2}r_{\gamma}^{-1/2}.
	\end{equation}
	
	3)Let $\xi$ be a point in $\partial X$. Then we have
	\begin{equation}\label{equ:fgdec}
	f_ \gamma(\xi)\leq r_{\gamma}^\delta d_o(\xi,\xmg)^{-2\delta}.
	\end{equation}
\end{lem}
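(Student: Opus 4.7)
\textbf{Plan of proof for Lemma \ref{lem:fg}.}

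For part 1), the key point is that $\gamma^{-1}o$ lies on the geodesic ray from $o$ to $x^m_\gamma$. The universal upper bound $f_\gamma(\eta)\leq r_\gamma^{-\delta}$ for every $\eta\in\partial X$ is really just the triangle inequality for the Busemann cocycle: for any approximating sequence $z\to\eta$ we have $d(\gamma^{-1}o,z)-d(o,z)\geq -d(o,\gamma^{-1}o)=-\kappa(\gamma)$, hence $B_\eta(\gamma^{-1}o,o)\geq -\kappa(\gamma)$ and $f_\gamma(\eta)\leq e^{\delta\kappa(\gamma)}=r_\gamma^{-\delta}$. Equality at $\eta=x^m_\gamma$ follows by taking $z$ on the ray $ox^m_\gamma$, on which $d(\gamma^{-1}o,z)=d(o,z)-\kappa(\gamma)$ for $d(o,z)$ large. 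For the reverse bound when $\eta\in B_\gamma$, pick $p$ on the ray $o\eta$ with $d(p,\gamma^{-1}o)\leq C_0$. Then $d(o,p)\geq \kappa(\gamma)-C_0$, and using $d(\gamma^{-1}o,\xi_t)\leq d(p,\xi_t)+C_0=t-d(o,p)+C_0$ I get $B_\eta(\gamma^{-1}o,o)\leq -d(o,p)+C_0\leq -\kappa(\gamma)+2C_0$, which is exactly the claimed factor $e^{2C_0\delta}$ after exponentiating by $-\delta$.

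For part 2), I would apply Lemma \ref{lem:coclip} with $x=o$, $y=\gamma^{-1}o$, so that $d(x,y)=\kappa(\gamma)$ and $e^{d(x,y)/2}=r_\gamma^{-1/2}$; the smallness hypothesis $d_o(\xi,\eta)\leq e^{-d(x,y)-2}$ coincides with $d_o(\xi,\eta)\leq r_\gamma/e^2$. Because the Busemann function is antisymmetric in its two spatial arguments, the same Lipschitz bound holds for $B_\bullet(\gamma^{-1}o,o)$:
\[
\bigl|B_\xi(\gamma^{-1}o,o)-B_\eta(\gamma^{-1}o,o)\bigr|\leq 32\, r_\gamma^{-1/2}\, d_o(\xi,\eta)^{1/2}.
\]
Writing $f_\gamma(\xi)/f_\gamma(\eta)-1=e^{-\delta\Delta}-1$ where $\Delta$ is the difference of Busemann values, I note that under the smallness hypothesis $|\Delta|\leq 32\delta/e$ is uniformly bounded, so the elementary estimate $|e^A-1|\leq e^{|A|}|A|$ gives the desired inequality with $C_2=32\delta\, e^{32\delta/e}$.

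For part 3), the cleanest approach is a direct computation via the hyperbolic law of cosines. Let $\theta=\angle x^m_\gamma\, o\,\xi$. In the triangle with vertices $o,\gamma^{-1}o,\xi_t$ (where $\xi_t$ is on the ray $o\xi$ at hyperbolic distance $t$) the hyperbolic law of cosines gives
\[
\cosh d(\gamma^{-1}o,\xi_t)=\cosh\kappa(\gamma)\cosh t-\sinh\kappa(\gamma)\sinh t\cos\theta.
\]
Dividing by $\tfrac12 e^t$ and letting $t\to+\infty$ yields
\[
e^{B_\xi(\gamma^{-1}o,o)}=\cosh\kappa(\gamma)-\sinh\kappa(\gamma)\cos\theta = e^{\kappa(\gamma)}\sin^2(\theta/2)+e^{-\kappa(\gamma)}\cos^2(\theta/2).
\]
By Lemma \ref{lem:radius}, $\sin(\theta/2)=d_o(\xi,x^m_\gamma)$, so dropping the nonnegative second summand gives $e^{B_\xi(\gamma^{-1}o,o)}\geq r_\gamma^{-1}d_o(\xi,x^m_\gamma)^2$, and raising to the power $-\delta$ yields \eqref{equ:fgdec}.

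The three parts are essentially independent and none is hard; the only mildly delicate point is the exponentiation step in 2), where one must check that the hypothesis $d_o(\xi,\eta)\leq r_\gamma/e^2$ is strong enough to keep the exponent $|\delta\Delta|$ uniformly bounded so that $|e^{-\delta\Delta}-1|$ remains comparable to $|\delta\Delta|$. Everything else is triangle-inequality gymnastics and one application of the hyperbolic law of cosines.
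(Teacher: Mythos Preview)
Your proposal is correct. Parts 1) and 2) follow essentially the same route as the paper: the triangle inequality for the Busemann cocycle in 1), and Lemma~\ref{lem:coclip} followed by $|e^t-1|\ll |t|$ for bounded $t$ in 2). (One tiny slip: you write ``$|\Delta|\leq 32\delta/e$'' where you mean $|\Delta|\leq 32/e$; the $\delta$ enters only after multiplying.)

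For part 3) the two arguments diverge. The paper uses the Gromov product directly: since $\gamma^{-1}o$ lies on the ray $o\,x^m_\gamma$, the triangle inequality $d(z,w)\leq d(z,\gamma^{-1}o)+d(\gamma^{-1}o,w)$ with $w\to x^m_\gamma$ along that ray gives
\[
-B_\xi(\gamma^{-1}o,o)+\kappa(\gamma)\ \leq\ \lim_{z\to\xi,\ w\to x^m_\gamma}\bigl(d(z,o)+d(w,o)-d(z,w)\bigr)\ =\ -2\log d_o(\xi,x^m_\gamma),
\]
which is \eqref{equ:fgdec} after multiplying by $\delta$ and exponentiating. Your approach instead computes $e^{B_\xi(\gamma^{-1}o,o)}$ exactly via the hyperbolic law of cosines and then drops the nonnegative term $e^{-\kappa(\gamma)}\cos^2(\theta/2)$. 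Both are short and elementary; the paper's version is metric and would transfer verbatim to any $\mathrm{CAT}(-1)$ setting, while yours yields the sharper identity $e^{B_\xi(\gamma^{-1}o,o)}=r_\gamma^{-1}d_o(\xi,x^m_\gamma)^2+r_\gamma\bigl(1-d_o(\xi,x^m_\gamma)^2\bigr)$, from which one also reads off the equality case and, if desired, a matching lower bound.
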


\begin{proof}
	For $\eqref{equ:yxgm}$, by definition of $\xmg$, we have
	\[f_{\gamma}(\xmg)=e^{-\delta B_{\xmg}(\gamma^{-1}o,o)}=e^{\delta d(\gamma^{-1}o,o)}=r_{\gamma}^{-\delta} .\]
	For $\eta$ in $B_{\gamma}$, by applying triangle inequality, we obtain \eqref{equ:yxgm}.
	
	For the second statement.
	By Lemma \ref{lem:coclip}, we get $$|B_\xi(\gamma^{-1}o,o)-B_{\eta}(\gamma^{-1}o,o)|\leq 32d_o(\xi,\eta)^{1/2}r_{\gamma}^{-1/2}\leq 32/e.$$ 
	Due to $|e^t-1|\ll |t|$ for $|t|\leq 32/e$, we obtain
	\begin{align*}
	|f_ \gamma(\xi)/f_ \gamma(\eta)-1|=|e^{-\delta(B_\xi(\gamma^{-1}o,o)-B_{\eta}(\gamma^{-1}o,o))}-1|
	\ll d_o(\xi,\eta)^{1/2}r_{\gamma}^{-1/2}.
	\end{align*}
	
	For the third statement, by definition
	\begin{align*}
	-B_\xi(\gamma^{-1}o,o)+d(\gamma^{-1}o,o)&=\lim_{z\rightarrow \xi}-d(z,\gamma^{-1}o)+d(z,o)+d(\gamma^{-1}o,o)\\
	&\leq \lim_{z\rightarrow \xi,w\rightarrow \xmg}-d(z,w)+d(z,o)+d(w,o)
	=-2\log d_o(\xi,\xmg).
	\end{align*}
	The proof is complete.
\end{proof}

For any $n$ in $\N$, let $r_n=e^{-4C_0n}$. We want to construct a cover of $\Lambda_{\Gamma}$. Let $S_n$ be the set of all $\gamma$ that satisfy
\begin{equation}\label{equ:sn}
e^{-2C_0}r_n\leq r_{\gamma}< r_n.
\end{equation} 
\begin{lem}\label{lem:cover}
	For any $n$ in $\N_0$, the family $\{B_{\gamma}\}_{\gamma\in S_n}$ consists of balls which cover $\Lambda_{\Gamma}$ with bounded Lebesgue number $C_3$, that is any $\xi\in\Lambda_{\Gamma}$ is contained in at most $C_3$ balls.
\end{lem}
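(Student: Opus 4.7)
The plan is to establish the two claims in the lemma separately: that the family $\{B_\gamma\}_{\gamma\in S_n}$ covers $\Lambda_\Gamma$, and that the overlap multiplicity is uniformly bounded. Both arguments should rely on the choice of $C_0$ in \eqref{equ:c0}, which in particular forces the diameter of $C(\Gamma)$ to be at most $C_0/6$.

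For the covering, I would fix $\xi\in\Lambda_\Gamma$ and look at the geodesic ray $\{\xi_t\}_{t\geq 0}$ from $o$ to $\xi$. Since $o\in\mathrm{hull}(\Lambda_\Gamma)$ and $\xi\in\Lambda_\Gamma$, the ray lies in the convex hull, so the point $p:=\xi_{4C_0 n+C_0}$ belongs to $\mathrm{hull}(\Lambda_\Gamma)\cap X$. Convex cocompactness then yields some $\gamma\in\Gamma$ with $d(p,\gamma^{-1}o)\leq C_0/6$. By the triangle inequality,
\[
\kappa(\gamma)\in\bigl[4C_0 n+5C_0/6,\;4C_0 n+7C_0/6\bigr]\subset(4C_0 n,\;4C_0 n+2C_0],
\]
so $e^{-2C_0}r_n\leq r_\gamma<r_n$, i.e.\ $\gamma\in S_n$. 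Moreover the ray $o\xi$ already passes within $C_0/6<C_0$ of $\gamma^{-1}o$, so $\xi\in B_\gamma$.

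For the bounded multiplicity, I would start from the observation that if $\xi\in B_\gamma$ for some $\gamma\in S_n$, then there is a point $q$ on the ray $o\xi$ with $d(q,\gamma^{-1}o)\leq C_0$, and the constraint $\kappa(\gamma)\in(4C_0 n,\,4C_0 n+2C_0]$ forces $d(o,q)\in(4C_0 n-C_0,\,4C_0 n+3C_0]$. Hence $\gamma^{-1}o$ lies in the $C_0$-tube around a fixed geodesic segment of hyperbolic length $4C_0$, a region of hyperbolic volume bounded by some $V_0=V_0(C_0)$. Because $\Gamma$ acts properly discontinuously on $\H^n$ with $\Gamma\backslash(\mathrm{hull}(\Lambda_\Gamma)\cap X)$ compact, there is a uniform lower bound $\epsilon_0>0$ for the distance between distinct orbit points of $\Gamma\cdot o$ in the convex hull, and the stabilizer $\mathrm{Stab}_\Gamma(o)$ is finite of bounded order. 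A packing argument (disjoint $(\epsilon_0/2)$-balls around each orbit point) then bounds the number of orbit points in the tube by $V_0/v(\epsilon_0/2)$, with $v(r)$ the hyperbolic ball volume; multiplying by the stabilizer order gives the uniform constant $C_3$.

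The only delicate point is the arithmetic around $C_0$: the factor of $6$ in \eqref{equ:c0} is exactly what is needed so that the parameter $t=4C_0 n+C_0$ can tolerate the perturbation of size $C_0/6$ and still land inside the window $(4C_0 n,\,4C_0 n+2C_0]$ defining $S_n$. The rest is routine, but this alignment of constants is the only place where the careful choice of $C_0$ is essential.
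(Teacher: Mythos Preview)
Your proof is correct and follows essentially the same route as the paper's: pick a point on the ray $o\xi$ at depth $|\log r_n|+C_0$, use cocompactness of the convex core to find a nearby orbit point, and for the multiplicity bound observe that all relevant $\gamma^{-1}o$ lie in a $C_0$-tube around a fixed geodesic segment of length $4C_0$, then pack. Your version is in fact slightly more careful about the strict inequality $r_\gamma<r_n$ and about possible torsion (the paper simply says ``discrete without torsion''), which is fine.

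One small correction to your commentary: the factor $6$ in \eqref{equ:c0} is \emph{not} ``exactly what is needed'' here. Any choice making the diameter of $C(\Gamma)$ strictly less than $C_0$ would already place $\kappa(\gamma)$ in the open-closed window $(4C_0n,\,4C_0n+2C_0]$; the factor $6$ is overkill for this lemma and is actually used later, in Lemma~\ref{lem:generate}, where one needs the diameter bounded by $C_0/6$. So your arithmetic is sound, but the editorial remark about where the constant matters is off.
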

\begin{proof}
	Let $\xi$ be a point in the limit set $\Lambda_{\Gamma}$, then the ray $o\xi$ is in the convex hull $hull(\Lambda_{\Gamma})$. Consider the point $p_n$ in the ray such that $d(p_n,o)=|\log r_n|+C_0$. Since the diameter of $C(\Gamma)$ is less than $C_0$ \eqref{equ:c0}, there exists $\gamma$ in $\Gamma$ such that 
	\begin{equation}\label{equ:xngo}
	d(p_n,\gamma^{-1}o)\leq C_0.
	\end{equation} 
	Hence $d(\gamma^{-1}o,o)\in [|\log r_n|,|\log r_n|+2C_0]$, which implies $\gamma\in S_n$. The inequality \eqref{equ:xngo} also implies that the distance from $\gamma^{-1}o$ to the ray $o\xi$ is less than $C_0$, i.e. $d(\gamma^{-1}o,o\xi)\leq C_0$. By the definition of shadow, we obtain $\xi\in B_{\gamma}$. The family $\{B_{\gamma}\}_{\gamma\in S_n}$ is a cover of the limit set $\Lambda_{\Gamma}$.
	
	It remains to prove that each point $\xi\in\Lambda_{\Gamma}$ is covered by a bounded number of balls. Let $q_n,q_n'$ be two points in the ray $o\xi$ with $d(q_n,o)=|\log r_n|-C_0$ and $d(q_n',o)=|\log r_n|+3C_0$. Let $J$ be the geodesic segment connecting $q_n$ and $q_n'$. Let 
	$$S_n(\xi)=\{\gamma\in S_n: \xi\in B_{\gamma} \}.$$
	Due to $\gamma\in S_{n}(\xi)$ and the definition of shadow, we obtain $d(\gamma^{-1}o,J)\leq C_0$, that is $\gamma^{-1}o$ are in $J^{C_0}$, the $C_0$ neighbourhood of $J$. The group $\Gamma$ is discrete without torsion, there exists $c>0$ such that $\min_{\gamma\neq e}d(o,\gamma^{-1}o)>c$. Then the set $\{\gamma^{-1}o \}_{\gamma\in S_n(\xi)}$ is a discrete set in $J^{C_0}$ and is $c$ separated, that is any two different points has distance greater than $c$. The volume of $J^{C_0}$ is uniformly bounded. Hence there is upper bound of the number of elements in $S_n(\xi)$. The proof is complete.
\end{proof}
\begin{rem}
	This is a key lemma where we need the hypothesis of convex cocompactness.
	When $\Gamma$ is a Schottky subgroup, the construction is easier. We can find a cover of the limit  set $\Lambda_{\Gamma}$ with no overlap.
\end{rem}
\subsection{Properties of operator $P_n$}
We will use the family of covers $S_n$ and basic properties of $f_{\gamma}$ to give properties of operator $P_n$, which will be constructed later.

Let $C_4=4C_0$ and let $\beta$, $\epsilon$ and $C_5$ be positive numbers defined subsequently such that
\begin{align*}
1-\beta>\beta+e^{-C_4},\ r_n^\epsilon=(1-\beta)^n,\ C_5=\frac{2 e^{\epsilon C_4}}{1-\beta}.
\end{align*}
For every $\gamma$ in $\Gamma$, Let $B_\gamma'=\cal O_{C_1}(o,\gamma^{-1}o)$.
By Lemma \ref{lem:shadow}, we know that $B_\gamma' \cap \Lambda_{\Gamma}\neq\emptyset$. We fix $\eta_\gamma$ in the set $B_\gamma' \cap \Lambda_{\Gamma}$ for each $\gamma$.
Let $R$ be a continuous function on $\partial X$, which is positive on $\limit$. For any $n$ in $\N$ and $\eta\in\partial X$, we define 
\begin{equation}
P_{n}R(\eta)=\sum_{ \gamma\in S_{n}} R(\eta_\gamma)r_ \gamma^\delta f_ \gamma(\eta).
\end{equation}
This construction of $P_nR$ will inherit the Lipschitz property of $f_{\gamma}$.
\begin{lem}\label{lem:pnlip}
	For any $n$ in $\N_0$, if $\xi,\eta\in\partial X$ satisfy $d_o(\xi,\eta)\leq r_{n+1}$, then
	\begin{align*}
	\left|\frac{P_nR(\xi)}{P_nR(\eta)}-1\right|\leq (d_o(\xi,\eta)/r_{n+1})^{1/2}.
	\end{align*}
\end{lem}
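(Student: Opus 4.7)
The proof should be a direct termwise application of the Lipschitz estimate \eqref{equ:fglip} for $f_\gamma$ from Lemma \ref{lem:fg}, exploiting the fact that every $\gamma\in S_n$ has scale $r_\gamma$ slightly larger than $r_{n+1}$.

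First I would verify that the Lipschitz hypothesis of part (2) of Lemma \ref{lem:fg} applies to every $\gamma\in S_n$ for the pair $\xi,\eta$. By the definition of $S_n$ in \eqref{equ:sn} and since $r_n = e^{4C_0} r_{n+1}$, any $\gamma\in S_n$ satisfies
\[
r_\gamma \geq e^{-2C_0} r_n = e^{2C_0} r_{n+1}.
\]
As $C_0 \geq 2$ by \eqref{equ:c0}, we get $r_\gamma \geq e^2 r_{n+1} \geq e^2 d_o(\xi,\eta)$, so the hypothesis $d_o(\xi,\eta)\leq r_\gamma/e^2$ of \eqref{equ:fglip} holds. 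Therefore, for every $\gamma\in S_n$,
\[
\bigl|f_\gamma(\xi) - f_\gamma(\eta)\bigr| \leq C_2\, d_o(\xi,\eta)^{1/2}\, r_\gamma^{-1/2}\, f_\gamma(\eta).
\]

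Next, I would plug this into the definition of $P_nR$ and sum. Since $R\geq 0$ and $r_\gamma^\delta\geq 0$, the triangle inequality gives
\[
\bigl|P_nR(\xi) - P_nR(\eta)\bigr| \leq C_2\, d_o(\xi,\eta)^{1/2} \sum_{\gamma\in S_n} R(\eta_\gamma)\, r_\gamma^\delta\, f_\gamma(\eta)\, r_\gamma^{-1/2}.
\]
For $\gamma\in S_n$ we have the uniform bound $r_\gamma^{-1/2} \leq e^{C_0} r_n^{-1/2} = e^{C_0}\cdot e^{-2C_0} r_{n+1}^{-1/2} = e^{-C_0} r_{n+1}^{-1/2}$. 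Pulling this out of the sum yields
\[
\bigl|P_nR(\xi) - P_nR(\eta)\bigr| \leq C_2\, e^{-C_0}\, \bigl(d_o(\xi,\eta)/r_{n+1}\bigr)^{1/2}\, P_nR(\eta).
\]

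Finally, the choice of $C_0$ in \eqref{equ:c0} gives $C_0 \geq 6\log C_2$, so $C_2 e^{-C_0} \leq C_2^{1-6} \leq 1$, and dividing by $P_nR(\eta)>0$ (which is positive for $\eta$ in a neighborhood of $\Lambda_\Gamma$ since at least one $f_\gamma(\eta)>0$ and $R>0$ on $\Lambda_\Gamma$) gives the desired estimate. There is no real obstacle here; the only thing to watch carefully is that the constant $C_0$ has been fixed large enough in \eqref{equ:c0} to swallow the constant $C_2$ coming from the pointwise Lipschitz estimate—which is precisely the reason the term $\log C_2$ appears in the definition \eqref{equ:c0}.
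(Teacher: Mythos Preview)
Your proof is correct and follows essentially the same approach as the paper's. The paper's version is slightly more compressed: after obtaining the termwise bound $|f_\gamma(\xi)/f_\gamma(\eta)-1|\leq (d_o(\xi,\eta)/r_{n+1})^{1/2}$ for each $\gamma\in S_n$, it simply invokes the elementary fact that a positive linear combination inherits the same ratio bound, whereas you unfold this step via the triangle inequality on $|P_nR(\xi)-P_nR(\eta)|$. One small remark: your parenthetical about positivity of $P_nR(\eta)$ ``in a neighborhood of $\Lambda_\Gamma$'' is unnecessarily cautious---since $f_\gamma(\eta)=e^{-\delta B_\eta(\gamma^{-1}o,o)}>0$ for every $\eta\in\partial X$ and $R(\eta_\gamma)>0$ because $\eta_\gamma\in\Lambda_\Gamma$, the sum $P_nR(\eta)$ is strictly positive on all of $\partial X$.
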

\begin{proof}
	For $\gamma\in S_n$, by \eqref{equ:sn} we have $d_o(\xi,\eta)\leq r_{n+1}=e^{-4C_0}r_n\leq e^{-2C_0}r_{\gamma}\leq r_{\gamma}/e^2$. By \eqref{equ:sn} and \eqref{equ:c0}, we get $r_{n+1}\leq r_\gamma e^{-2C_0}\leq r_\gamma/C_2^2$, then we use \eqref{equ:fglip} to obtain 
	\[\left|\frac{f_{\gamma}(\xi)}{f_{\gamma}(\eta)}-1\right|\leq C_2(d_o(\xi,\eta)/r_{\gamma})^{1/2}\leq (d_o(\xi,\eta)/r_{n+1})^{1/2}. \]
	As $P_{n}R$ is a positive linear combination of $f_{\gamma}$ with $\gamma\in S_{n}$, we have
	$|\frac{P_nR(\xi)}{P_nR(\eta)}-1|\leq (\frac{d_o(\xi,\eta)}{r_{n+1}})^{1/2}$.
\end{proof}
\begin{lem}\label{lem:pnr}
	For any $n$ in $\N$, if a positive function $R$ on $\limit$ satisfies the following condition:
	
	(1) For $\xi,\eta$ in $\Lambda_{\Gamma}$, if $d_o(\xi,\eta)\leq r_{n+1}$, then we have
	\begin{equation}\label{equ:rlip}
	|R(\xi)/R(\eta)-1|\leq (d_o(\xi,\eta)/r_{n+1})^{1/2}.
	\end{equation}
	
	(2) For $\xi,\eta$ in $\Lambda_{\Gamma}$, if $d_o(\xi,\eta)>r_{n+1}$, then
	\begin{equation}\label{equ:rnxy}
	|R(\xi)/R(\eta)|\leq C_5d_o(\xi,\eta)^\epsilon /(1-\beta)^n.
	\end{equation}
	Then there exist $C_6,C_7$ independent of $n, R$ such that for all $\eta\in\Lambda_{\Gamma}$
	\begin{equation}
	R(\eta)/C_6\leq P_{n+1}R(\eta)\leq C_7 R(\eta).
	\end{equation}
\end{lem}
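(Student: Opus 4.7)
I would prove the two-sided comparison by a dyadic decomposition of $S_{n+1}$ around $\eta$, combining the pointwise estimates on $f_\gamma$ from Lemma \ref{lem:fg}, the bounded-multiplicity shadow cover of Lemma \ref{lem:cover}, and the shadow-measure bound $\mu(B_\gamma)\approx r_\gamma^\delta$ from Lemma \ref{lem:shadow}. Together these yield an Ahlfors-type bound $\mu(B(\eta,R))\lesssim R^\delta$ for $\eta\in\Lambda_\Gamma$, and in particular they control how many elements of $S_{n+1}$ can have their shadow centre $x^m_\gamma$ inside a given ball around $\eta$. Recall $\eta_\gamma\in B_\gamma'$, so $d_o(\eta_\gamma,x^m_\gamma)\lesssim r_{n+1}$ for every $\gamma\in S_{n+1}$.

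\textbf{Upper bound.} Set $T_0:=\{\gamma\in S_{n+1}:d_o(\eta,x^m_\gamma)<r_{n+1}\}$ and, for $k\geq 1$, $T_k:=\{\gamma\in S_{n+1}:2^{k-1}r_{n+1}\leq d_o(\eta,x^m_\gamma)<2^k r_{n+1}\}$. Bounded overlap and Ahlfors regularity at the scale $r_{n+1}$ give $|T_k|\lesssim 2^{k\delta}$. For $\gamma\in T_k$ with $k\geq 1$, \eqref{equ:fgdec} yields $r_\gamma^\delta f_\gamma(\eta)\lesssim 2^{-2k\delta}$, and $d_o(\eta_\gamma,\eta)\approx 2^k r_{n+1}>r_{n+1}$; hypothesis (2) together with the calibration $r_{n+1}^\epsilon=(1-\beta)^{n+1}$ then gives
\[R(\eta_\gamma)/R(\eta)\leq C_5(2^k r_{n+1})^\epsilon/(1-\beta)^n=C_5(1-\beta)\,2^{k\epsilon}.\]
The contribution of the shell $T_k$ is therefore $\lesssim C_5(1-\beta)\,2^{k(\epsilon-\delta)}R(\eta)$, and the geometric series converges provided $\epsilon<\delta$, giving a total $\lesssim R(\eta)$. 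The shell $T_0$ contributes $O(R(\eta))$ via the trivial bounds $|T_0|\lesssim 1$, $r_\gamma^\delta f_\gamma(\eta)\leq 1$, and either hypothesis (1) or (2) to control $R(\eta_\gamma)/R(\eta)\lesssim 1$.

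\textbf{Lower bound.} Lemma \ref{lem:cover} furnishes some $\gamma_0\in S_{n+1}$ with $\eta\in B_{\gamma_0}$, and then \eqref{equ:yxgm} gives $r_{\gamma_0}^\delta f_{\gamma_0}(\eta)\geq e^{-2C_0\delta}$. Retaining only this single term already yields
\[P_{n+1}R(\eta)\geq e^{-2C_0\delta}R(\eta_{\gamma_0}).\]
Since both $\eta$ and $\eta_{\gamma_0}$ lie within $O(r_{n+1})$ of $x^m_{\gamma_0}$, we have $d_o(\eta_{\gamma_0},\eta)\lesssim r_{n+1}$. When $d_o(\eta_{\gamma_0},\eta)>r_{n+1}$, applying hypothesis (2) with the roles of $\xi$ and $\eta$ swapped gives $R(\eta_{\gamma_0})/R(\eta)\geq (r_{n+1}/d_o)^\epsilon/(C_5(1-\beta))$, which is a positive absolute constant once $d_o/r_{n+1}$ is bounded; in the complementary regime hypothesis (1) does the job directly. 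This proves $P_{n+1}R(\eta)\geq R(\eta)/C_6$.

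\textbf{Main obstacle.} The delicate issue is the calibration of constants: the $n$-dependent factor $(1-\beta)^{-n}$ on the right of hypothesis (2) must be exactly absorbed by $r_{n+1}^\epsilon=(1-\beta)^{n+1}$, so that every bound remains $n$-independent. Tracking this cancellation through both the dyadic sum in the upper bound and the single-shadow extraction in the lower bound is the real content of the lemma; moreover, convergence of the geometric series requires $\epsilon<\delta$, a constraint that must be respected when the parameters $\beta,\epsilon,C_5$ are fixed earlier in the construction.
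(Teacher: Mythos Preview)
Your proposal is correct and follows essentially the same strategy as the paper. The lower bound is identical: extract a single $\gamma_0\in S_{n+1}$ whose shadow contains $\eta$, use \eqref{equ:yxgm}, and compare $R(\eta_{\gamma_0})$ with $R(\eta)$ via hypothesis (1) or (2). For the upper bound, the paper also splits into near terms ($\eta\in B_\gamma$, at most $C_3$ of them) and far terms; the only organisational difference is that instead of your discrete dyadic shells $T_k$ with the counting bound $|T_k|\lesssim 2^{k\delta}$, the paper converts the far sum into the integral $\int_{B(\eta,r_{n+1}/C)^c} d_o(\eta,\xi)^{-(2\delta-\epsilon)}\,d\mu(\xi)$ using bounded overlap and $r_\gamma^\delta\approx\mu(B_\gamma')$, and then bounds that integral by the auxiliary Lemma~\ref{lem:intnu} (whose proof is itself an annular decomposition). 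Both arguments rest on the Ahlfors upper bound $\mu(B(\eta,r))\lesssim r^\delta$ and both require $\epsilon<\delta$; the cancellation of the factor $(1-\beta)^{-n}$ against $r_{n+1}^\epsilon$ that you highlight is exactly what the paper uses in its final line.
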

\begin{proof}
	Since $\{B_{\gamma}\}_{\gamma\in S_{n+1}}$ is a cover of $\Lambda_{\Gamma}$, there is a $\gamma\in S_{n+1}$ such that $\eta\in B_{\gamma}$. By definition $P_{n+1}R(\eta)\geq R( \eta_\gamma)f_{ \gamma}(\eta)r_{\gamma}^\delta$. Thanks to $r_\gamma\leq r_{n+1}=e^{-4C_0(n+1)}\leq e^{-4}$, we get $\sinh \kappa(\gamma)\geq r_\gamma^{-1}/4$. Due to Lemma \ref{lem:radius} and \eqref{equ:sn}, we obtain $B_{\gamma}$ is of radius 
	$$\sinh C_0/\sinh \kappa(\gamma)\leq 2e^{C_0}r_{\gamma}\leq 2e^{C_0}r_{n+1}.$$ 
	Applying inequality \eqref{equ:rlip} or \eqref{equ:rnxy}  implies 
	\begin{equation}\label{equ:rxy}
	R(\eta_\gamma)\approx R(\eta). 
	\end{equation}
	Due to $\eta$ in $B_\gamma$, by \eqref{equ:yxgm}, we obtain $f_{\gamma}(\eta)\gg f_\gamma(\xmg)=r_{\gamma}^{-\delta}$. Putting it all together,
	we get $P_{n+1}R(\eta)\gg R(\eta)$.
	
	By Lemma \ref{lem:cover}, there is at most $C_3$ element $\gamma$ such that $B_{\gamma}$ contains $\eta$. For these $\gamma$, by \eqref{equ:rxy}, we have
	\begin{equation}\label{equ:gsn}
	\sum_{\gamma\in S_{n+1},\eta\in B_{\gamma}}R(\eta_\gamma)r_{\gamma}^\delta f_{\gamma}(\eta)\ll C_3R(\eta).
	\end{equation}
	For the rest of $\gamma$'s, recall that $B_{\gamma}'=\cal O_{C_1}(o,\gamma^{-1}o)$ is a smaller ball in $B_\gamma$. Due to Lemma \ref{lem:radius}, the radius of $B_\gamma'$ is $r(B_\gamma'):=\sinh C_1/\sinh \kappa(\gamma)$. 
	For $\gamma$ such that $\eta\notin B_{\gamma}$, we know there exists $C>0$ such that 
	\begin{equation}\label{equ:doeta}
	d_o(\eta,B_{\gamma}')>r_{n+1}/C.
	\end{equation}
	This is due to $r(B_\gamma)-r(B_\gamma')\gg r_\gamma\gg r_{n+1}$. By \eqref{equ:doeta} and \eqref{equ:rlip}, we know that even $d(\eta,\eta_\gamma)<r_{n+1}$ we also have $R(\eta_\gamma)\ll R(\eta) d_o(\eta,\eta_\gamma)^\epsilon/(1-\beta)^n$.
	Together with  \eqref{equ:rnxy} and \eqref{equ:fgdec}, we have
	\begin{equation}\label{equ:gnotsn}
	\begin{split}
	\sum_{\gamma\in S_{n+1},\eta\notin B_{\gamma}}R(\eta_\gamma)r_{\gamma}^\delta f_{\gamma}(\eta)
	&\ll R(\eta)(1-\beta)^{-n}\sum_{\gamma\in S_{n+1},\eta\notin B_{\gamma}}r_ \gamma^{\delta}f_{\gamma}(\eta)d_o(\eta,\eta_\gamma)^\epsilon\\
	&\leq R(\eta)(1-\beta)^{-n}\sum_{\gamma\in S_{n+1},\eta\notin B_{\gamma}}r_ \gamma^{2\delta} d_o(\eta,\eta_\gamma)^{\epsilon}d_o(\eta,\xmg)^{-2\delta}.
	\end{split}
	\end{equation}
	Due to \eqref{equ:c0}, we get $r(B_\gamma)-r(B_\gamma')\geq 4r(B_\gamma')$. This implies for $\xi$ in $B_\gamma'$, we have $d_o(\eta,\eta_\gamma)\geq d_o(\eta,\xi)-d_o(\xi,\eta_\gamma)\geq \frac{1}{2}d_o(\eta,\xi)$, which is also true if we replace $\eta_\gamma$ by $\xmg$. Together with Lemma \ref{lem:shadow}, that is  $r_{\gamma}^\delta\approx \mu(B_{\gamma}')$, and \eqref{equ:gsn}, \eqref{equ:gnotsn}, \eqref{equ:sn}, we obtain
	\begin{align*}
	P_{n+1}R(\eta)\ll R(\eta)\left(1+(1-\beta)^{-n}r_{n+1}^\delta\sum_{\gamma\in S_{n+1},\eta\notin B_{\gamma}}\int_{B_{\gamma}'}\frac{1}{d_o(\eta,\xi)^{2\delta-\epsilon}}\dd\mu(\xi)\right).
	\end{align*}  
	By Lemma \ref{lem:cover}, the union of balls $B_{\gamma}$ covers at most $C_3$ times, which is also true for smaller covers $B_{\gamma}'$. By \eqref{equ:doeta}, this implies
	\begin{align}\label{equ:pn+1}
	P_{n+1}R(\eta)\ll R(\eta)\left(1+(1-\beta)^{-n}r_{n+1}^\delta\int_{B(\eta,r_{n+1}/C)^c}\frac{1}{d_o(\eta,\xi)^{2\delta-\epsilon}}\dd\mu(\xi)\right).
	\end{align}  
	\begin{lem}\label{lem:intnu} Let $\theta$ be a positive number. For all $r>0$ and $\eta\in\Lambda_{\Gamma}$, we have
		\begin{equation}
		\int_{B(\eta,r)^c}\frac{1}{d_o(\eta,\xi)^{\delta+\theta}}\dd\mu(\xi)\ll_\theta \frac{1}{r^\theta}.\footnote
		{We write $f\ll_\theta g$ for two real functions if there exists a constant $C$ depending on the group and $\theta$ such that $f\leq C g$.}
		\end{equation}
	\end{lem}
	Therefore, Lemma \ref{lem:intnu} and \eqref{equ:pn+1} imply
	\begin{align*}
	P_{n+1}R(\eta)\ll R(\eta)(1+(1-\beta)^{-n}r_{n+1}^\delta/r_{n+1}^{\delta-\epsilon})=R(\eta)(1+(r_{n+1}/r_n)^\epsilon).
	\end{align*}
	The proof is complete.
\end{proof}
It remains to proof Lemma \ref{lem:intnu}.
\begin{proof}[Proof of Lemma \ref{lem:intnu}]
	Due to \cite[Theorem 7]{Sullivan2}, we have that $\mu(B(\eta,r))\ll r^\delta$ for all balls in $\partial X$ with $\eta\in\Lambda_{\Gamma}$ and $r>0$.
	
	Then
	\begin{align*}
	\int_{B(\eta,r)^c}&\frac{1}{d_o(\eta,\xi)^{\delta+\theta}}\dd\mu(\xi)=\sum_{1\leq n\leq 1/r}\int_{B(\eta,(n+1)r)-B(\eta,nr)}\frac{1}{d_o(\eta,\xi)^{\delta+\theta}}\dd\mu(\xi)\\
	&\leq\sum_{1\leq n\leq 1/r}\int_{B(\eta,(n+1)r)-B(\eta,nr)}\frac{1}{(nr)^{\delta+\theta}}\dd\mu(\xi)\\
	&\leq r^{-(\delta+\theta)}\left(\sum_{1\leq n\leq 1/r}\mu(B(\eta,(n+1)r))(\frac{1}{n^{\delta+\theta}}-\frac{1}{(n+1)^{\delta+\theta}})-\mu(B(\eta,r))\right)\\
	&\ll r^{-(\delta+\theta)}\left(\sum_{n\geq 1}((n+1)r)^\delta(\frac{1}{n^{\delta+\theta}}-\frac{1}{(n+1)^{\delta+\theta}})\right)\ll_\theta r^{-\theta}.
	\end{align*}
	The proof is complete.
\end{proof}
\subsection{Proof of Theorem \ref{thm:expmom}}
We start to prove our main theorem in this section.
We will construct $\{u_n\}_{n\in\bb N}$ by induction such that 
\[ |1-\sum_{n\leq M}u_n(\eta)|\rightarrow 0 \text{ as }M\rightarrow\infty \text{, uniformally for all }\eta\in\Lambda_{\Gamma},\] 
where $u_n$ is a finite linear combination of $f_{\gamma}$. The main idea is the same as that in \cite{connell2007harmonicity}. Once we have a function on $\Lambda(\Gamma)$ which satisfies the conditions in Lemma \ref{lem:pnr}, we can use the operator $P_{n+1}$ to drop some mass for elements in $S_{n+1}$.

Let $R_0=1$ be the constant function on $\partial X$. We now proceed by induction. For $n$ in $\N$, let
\begin{align*}
u_{n+1}=\frac{\beta}{C_7}P_{n+1}R_n,\ R_{n+1}=R_n-u_{n+1}.
\end{align*}
The following lemma is similar to \cite[Lemma 3]{lalley1986gibbs}.
\begin{lem}\label{lem:rnlip} 
	For any $n$ in $\N$, the following holds. The function $R_n$ is positive on $\Lambda_{\Gamma}$ and for $\xi,\eta$ in $\Lambda_{\Gamma}$, if $d_o(\xi,\eta)\leq r_{n+1}$, then  we have
	\begin{equation}\label{equ:rlip1}
	|R_n(\xi)/R_n(\eta)-1|\leq (d_o(\xi,\eta)/r_{n+1})^{1/2}.
	\end{equation}
	
	For $\xi,\eta$ in $\Lambda_{\Gamma}$, if $d_o(\xi,\eta)> r_{n+1}$, then
	\begin{equation}\label{equ:rnxy1}
	|R_n(\xi)/R_n(\eta)|\leq C_5d_o(\xi,\eta)^\epsilon /(1-\beta)^n.
	\end{equation}
\end{lem}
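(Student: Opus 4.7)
The proof proceeds by induction on $n$. The base case $n=0$ is immediate: $R_0\equiv 1$, so both inequalities hold trivially. For the inductive step, the crucial observation is that the inductive hypothesis on $R_n$ supplies exactly the assumptions of Lemma \ref{lem:pnr}, so applying it yields the two-sided comparison $R_n/C_6\leq P_{n+1}R_n\leq C_7 R_n$ on $\Lambda_\Gamma$. From this positivity of $R_{n+1}$ is immediate: $u_{n+1}=(\beta/C_7)P_{n+1}R_n\leq \beta R_n$, so $R_{n+1}\geq (1-\beta)R_n>0$. The same estimate gives the denominator bound $R_{n+1}(\eta)\geq (1-\beta)R_n(\eta)$ used throughout.

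For the near-diagonal estimate \eqref{equ:rlip1} at scale $r_{n+2}$, I would expand
\begin{equation*}
R_{n+1}(\xi)-R_{n+1}(\eta)=[R_n(\xi)-R_n(\eta)]-\tfrac{\beta}{C_7}[P_{n+1}R_n(\xi)-P_{n+1}R_n(\eta)],
\end{equation*}
bound the first bracket by the inductive Lipschitz property of $R_n$ at the coarser scale $r_{n+1}$, and the second by Lemma \ref{lem:pnlip} applied to $P_{n+1}R_n$ at the finer scale $r_{n+2}$. Dividing by $R_{n+1}(\eta)\geq(1-\beta)R_n(\eta)$ and using $P_{n+1}R_n\leq C_7 R_n$ to absorb the weight $P_{n+1}R_n(\eta)$, one obtains
\begin{equation*}
\left|\frac{R_{n+1}(\xi)}{R_{n+1}(\eta)}-1\right|\leq \left(\frac{d_o(\xi,\eta)}{r_{n+2}}\right)^{1/2}\cdot\frac{(r_{n+2}/r_{n+1})^{1/2}+\beta}{1-\beta},
\end{equation*}
and the defining inequality $1-\beta>\beta+e^{-C_4}$ (together with $r_{n+2}/r_{n+1}=e^{-C_4}$) makes the last factor at most $1$.

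For the large-scale decay \eqref{equ:rnxy1} when $d_o(\xi,\eta)>r_{n+2}$, I would split into two subcases. If $d_o(\xi,\eta)>r_{n+1}$, the inductive decay gives $R_n(\xi)/R_n(\eta)\leq C_5 d_o(\xi,\eta)^\epsilon/(1-\beta)^n$, and dividing by $1-\beta$ using the denominator bound yields the claim at level $n+1$. In the intermediate range $r_{n+2}<d_o(\xi,\eta)\leq r_{n+1}$, the inductive Lipschitz bound gives $R_n(\xi)/R_n(\eta)\leq 2$, hence $R_{n+1}(\xi)/R_{n+1}(\eta)\leq 2/(1-\beta)$; the target $C_5 d_o^\epsilon/(1-\beta)^{n+1}$ exceeds this precisely when $C_5 d_o^\epsilon/(1-\beta)^n\geq 2$, which follows from $d_o^\epsilon>r_{n+2}^\epsilon=(1-\beta)^{n+2}$ combined with the calibration $C_5=2e^{\epsilon C_4}/(1-\beta)$ and $r_1^\epsilon=1-\beta$ (so $(1-\beta)e^{\epsilon C_4}=1$).

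The main obstacle is the delicate interplay of constants $\beta,\epsilon,C_4,C_5$: one must check the Lipschitz and decay estimates are simultaneously compatible under a single choice of parameters. This works because $e^{-C_4}$ can be made arbitrarily small by taking $C_0$ large, the exponent $\epsilon$ is pinned to $C_4$ and $\beta$ by $r_1^\epsilon=1-\beta$, and $C_5$ is precisely calibrated to absorb the worst case in the transitional range $r_{n+2}<d_o\leq r_{n+1}$ where the Lipschitz regime of $R_n$ transitions to the polynomial-decay regime of $R_{n+1}$.
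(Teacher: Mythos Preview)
Your proof is correct and follows essentially the same approach as the paper's: induction on $n$, positivity via $u_{n+1}\le\beta R_n$ from Lemma~\ref{lem:pnr}, the Lipschitz step by combining the inductive bound on $R_n$ at scale $r_{n+1}$ with Lemma~\ref{lem:pnlip} for $P_{n+1}R_n$ at scale $r_{n+2}$ (the paper writes the same decomposition in ratio form rather than difference form, arriving at the identical factor $(e^{-C_4/2}+\beta)/(1-\beta)$), and the same two-subcase argument for \eqref{equ:rnxy1}. The one imprecision---that $1-\beta>\beta+e^{-C_4}$ does not literally give $e^{-C_4/2}+\beta\le 1-\beta$ since $e^{-C_4/2}>e^{-C_4}$---is shared with the paper and is harmless, as $\beta$ may simply be chosen to satisfy the stronger inequality $1-\beta>\beta+e^{-C_4/2}$.
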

\begin{proof}
	The proof is by induction on $n$. For $n=0$, two inequalities hold trivially. 
	Suppose they hold for $n$, we will prove they also hold for $n+1$. By the induction hypothesis and Lemma \ref{lem:pnr}, we know 
	\begin{equation}\label{equ:un+1}
	u_{n+1}(\eta)\leq \beta R_n(\eta)\text{ for } \eta\in\Lambda_{\Gamma},
	\end{equation}
	which implies that $R_{n+1}$ is always a positive function on $\Lambda_{\Gamma}$.
	
	Due to Lemma \ref{lem:pnlip}, if $d_o(\xi,\eta)<r_{n+2}$, then 
	\begin{equation}\label{equ:un+2}
	|u_{n+1}(\xi)/u_{n+1}(\eta)-1|=|P_{n+1}R_n(\xi)/P_{n+1}R_n(\eta)-1|\leq (d_o(\xi,\eta)/r_{n+2})^{1/2}.
	\end{equation}
	Hence, for $\xi,\eta$ such that $d_o(\xi,\eta)<r_{n+2}$, by \eqref{equ:un+1}, \eqref{equ:rlip1}, \eqref{equ:un+2} we have
	\begin{align*}
	&\left|\frac{R_{n+1}(\xi)}{R_{n+1}(\eta)}-1\right|=\left|\frac{R_n(\xi)-u_{n+1}(\xi)}{R_n(\eta)-u_{n+1}(\eta)}-1\right|=\left|\frac{R_n(\xi)/R_n(\eta)-1}{1-u_{n+1}(\eta)/R_n(\eta)}-\frac{u_{n+1}(\xi)/u_{n+1}(\eta)-1}{R_n(\eta)/u_{n+1}(\eta)-1}\right|\\
	&\leq |R_n(\xi)/R_n(\eta)-1|/(1-\beta)+|u_{n+1}(\xi)/u_{n+1}(\eta)-1|/(1/\beta-1)\\
	&\leq \frac{d_o(\xi,\eta)^{1/2}}{r_{n+1}^{1/2}(1-\beta)}+\frac{d_o(\xi,\eta)^{1/2}}{(1/\beta-1)r_{n+2}^{1/2}}=\left(\frac{d_o(\xi,\eta)}{r_{n+2}}\right)^{1/2}(e^{-C_4/2}+\beta)/(1-\beta)\leq \left(\frac{d_o(\xi,\eta)}{r_{n+2}}\right)^{1/2}.
	\end{align*}	
	
	It remains to prove \eqref{equ:rnxy1}. By construction and \eqref{equ:un+1}, we have
	\begin{equation}\label{equ:rn+1}
	R_{n+1}(\xi)/R_{n+1}(\eta)=(R_{n}(\xi)-u_{n+1}(\xi))/(R_{n}(\eta)-u_{n+1}(\eta))\leq|R_{n}(\xi)/R_{n}(\eta)|/(1-\beta).
	\end{equation}
	If $d_o(\xi,\eta)\geq r_{n+1}$, then due to \eqref{equ:rn+1}, the inequality \eqref{equ:rnxy1} holds for $n+1$ is a direct consequence of case $n$. If else, we have $r_{n+2}<d_o(\xi,\eta)\leq r_{n+1}$. By \eqref{equ:rlip1} we have $R_{n}(\xi)/R_{n}(\eta)\leq 2$, then by \eqref{equ:rn+1}
	\begin{align*}
	R_{n+1}(\xi)/R_{n+1}(\eta)\leq 2/(1-\beta)= 2r_{n+1}^\epsilon/(1-\beta)^{n+2}\leq \frac{2 e^{\epsilon C_4}}{1-\beta}\frac{d_o(\xi,\eta)^\epsilon}{(1-\beta)^{n+1}}=\frac{C_5d_o(\xi,\eta)^\epsilon}{(1-\beta)^{n+1}}.
	\end{align*}
	The proof is complete.
\end{proof}
\begin{proof}[Proof of Theorem \ref{thm:expmom}]
	We start to prove our theorem. Let $C=C_6C_7$, where $C_6,C_7$ are constants in Lemma \ref{lem:pnr}. Lemma \ref{lem:rnlip} implies that the constructed $R_n$ is positive on $\Lambda_{\Gamma}$ and always satisfies the condition in Lemma \ref{lem:pnr} for $n\in\N$. Hence for a point $\eta$ in $\Lambda_{\Gamma}$, we apply Lemma \ref{lem:pnr} to obtain
	\begin{align*}
	R_{n+1}(\eta)= R_n(\eta)(1-u_{n+1}(\eta)/R_n(\eta))=R_n(\eta)\left(1-\frac{\beta P_{n+1}R_n(\eta)}{C_7 R_n(\eta)}\right)\leq R_n(\eta)(1-\beta/C).
	\end{align*}
	Iterating the above inequality, we get $R_n(\eta)\leq (1-\beta/C)^n$. Therefore, $R_n\rightarrow 0$ uniformly on $\Lambda_{\Gamma}$ as $n\rightarrow \infty$. 
	
	We set
	\begin{equation}\label{equ:mu}
	\nu(\gamma)=\begin{cases}
	R_{n-1}(\eta_\gamma)r_{\gamma}^\delta\beta/C_7&\text{ for } n\in\bb N_0, \gamma\in S_n, \\
	0 & \text{ for }\gamma\notin \cup_{n\in\bb N_0}S_n.
	\end{cases} 
	\end{equation}
	Then $R_{n}-R_{n+1}=\sum_{\gamma\in S_{n+1}}\nu(\gamma)f_{\gamma}$. It follows that $1=R_0=\sum_{\gamma\in\Gamma}\nu(\gamma)f_{\gamma}$ on $\Lambda_{\Gamma}$, which means that $\mu$ is $\nu$-stationary by \eqref{equ:stationary}.
	
	Next we verify the moment condition. Let $\epsilon_1$ be a positive number. Let $\|\gamma\|$ be the operator norm of its action on $\R^{n+1}$ equipped with euclidean norm. By the Cartan decomposition, we obtain $\|\gamma\|=r_\gamma^{-1}$ (see for example \cite[Remark 6.28 and Lemma 6.33]{benoistquint}). We can compute the exponential moment
	\begin{align*}
	\sum_{\gamma\in\Gamma}\nu(\gamma)\|\gamma\|^{\epsilon_1}= \sum_{n\in\bb N_0}\sum_{\gamma\in S_n}\nu(\gamma)\|\gamma\|^{\epsilon_1}\leq \frac{\beta}{C_7}\sum_{n\in\bb N_0}\sum_{\gamma\in S_n}R_{n-1}(\eta_\gamma)r_{\gamma}^{\delta-\epsilon_1}.
	\end{align*}
	While $r_{\gamma}^\delta\approx\mu(B_{\gamma})$ (Lemma \ref{lem:shadow}), we have $\sum_{\gamma\in S_n}r_{\gamma}^\delta\ll \sum_{\gamma\in S_n}\mu(B_{\gamma})=1$. Due to $r_{\gamma}\geq e^{-C_4(n+1)}$ and $R_n\leq (1-\beta/C)^n$, we get
	\begin{align}
	\sum_{\gamma\in\Gamma}\nu(\gamma)\|\gamma\|^{\epsilon_1}\ll\sum_{n\in\bb N}(1-\beta/C)^n e^{\epsilon_1 C_4(n+1)}.
	\end{align}
	Take $\epsilon_1$ small enough, the above sum is finite. 
	
	Lastly we prove $\Gamma_\nu=\Gamma$. Since the diameter of $C(\Gamma)$ is less than $C_0/2$, there exists $\gamma_1$ in $S_1$ such that $d(o,\gamma_1 o)\in[|\log r_1|+C_0/2,|\log r_1|+3C_0/2]$. By construction of $S_1$ \eqref{equ:sn} and $\eqref{equ:mu}$ we know that the set $\Gamma_{C_0}:=\{\gamma\in\Gamma| d(o,\gamma o)\leq C_0/2 \}$ is contained in $\gamma_1^{-1}S_1\subset S_1^{-1}S_1\subset \Gamma_\nu$.
	\begin{lem}\label{lem:generate}
		If $C_0$ is greater than 6 times the diameter of the quotient set $C(\Gamma)$, then the set $\Gamma_{C_0}$ generates the group $\Gamma$.
	\end{lem}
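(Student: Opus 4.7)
The idea is the standard "chain along a geodesic" argument. Given any $\gamma \in \Gamma$, I will construct a chain $e = \eta_0, \eta_1, \ldots, \eta_n = \gamma$ of elements of $\Gamma$ whose successive ratios $\gamma_i := \eta_{i-1}^{-1}\eta_i$ lie in $\Gamma_{C_0}$, which then writes $\gamma = \gamma_1 \cdots \gamma_n$ as a product of generators.

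First I would consider the geodesic segment $[o, \gamma o]$ in $X$. Since $o \in \mathrm{hull}(\Lambda_\Gamma)$ by assumption and since $\gamma$ preserves both $\Lambda_\Gamma$ and its convex hull, $\gamma o \in \mathrm{hull}(\Lambda_\Gamma)$ as well; by convexity of the hull, the entire segment $[o,\gamma o]$ is contained in $\mathrm{hull}(\Lambda_\Gamma) \cap X$. Next I would subdivide it into points $o = p_0, p_1, \ldots, p_n = \gamma o$ along this geodesic with consecutive distances $d(p_{i-1}, p_i) \leq C_0/6$.

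Now, by hypothesis the diameter of the quotient $C(\Gamma) = \Gamma\backslash(\mathrm{hull}(\Lambda_\Gamma)\cap X)$ is strictly less than $C_0/6$; unwinding the definition of quotient distance, this means that for any $p \in \mathrm{hull}(\Lambda_\Gamma)\cap X$ there exists $\eta \in \Gamma$ with $d(p, \eta o) \leq C_0/6$. Apply this to each $p_i$ to pick such an $\eta_i$, with the choices $\eta_0 := e$ and $\eta_n := \gamma$ (which are legal because $d(p_0, o) = d(p_n, \gamma o) = 0$). Setting $\gamma_i := \eta_{i-1}^{-1}\eta_i$, the triangle inequality gives
\[
d(o, \gamma_i o) \;=\; d(\eta_{i-1}o, \eta_i o) \;\leq\; d(\eta_{i-1}o, p_{i-1}) + d(p_{i-1}, p_i) + d(p_i, \eta_i o) \;\leq\; 3\cdot \frac{C_0}{6} \;=\; \frac{C_0}{2},
\]
so each $\gamma_i \in \Gamma_{C_0}$. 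Telescoping yields $\gamma = \eta_0^{-1}\eta_n = \gamma_1 \gamma_2 \cdots \gamma_n$, proving that $\Gamma_{C_0}$ generates $\Gamma$.

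I do not anticipate a serious obstacle here: the only subtle point is verifying that the geodesic $[o,\gamma o]$ really lies in the convex hull, which is immediate from convexity once one notes that $\gamma o \in \mathrm{hull}(\Lambda_\Gamma)$ (this uses $o \in \mathrm{hull}(\Lambda_\Gamma)$ and $\Gamma$-invariance of $\Lambda_\Gamma$). Everything else is pigeonholing via the compactness (i.e.\ bounded diameter) of $C(\Gamma)$.
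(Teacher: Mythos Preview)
Your proof is correct and follows essentially the same ``chain along a geodesic'' argument as the paper: subdivide $[o,\gamma o]$ into short steps inside the convex hull, approximate each subdivision point by an orbit point $\eta_i o$ using the diameter bound on $C(\Gamma)$, and check via the triangle inequality that the successive ratios $\eta_{i-1}^{-1}\eta_i$ lie in $\Gamma_{C_0}$. The only cosmetic difference is that the paper takes step size equal to the diameter $C_\Gamma$ of $C(\Gamma)$ (yielding $d(\beta_j o,o)\leq 3C_\Gamma < C_0/2$), whereas you take step size $C_0/6$; both give the same bound.
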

	By Lemma \ref{lem:generate}, the proof is complete.
\end{proof}
It remains to prove Lemma \ref{lem:generate}.
\begin{proof}[Proof of Lemma \ref{lem:generate}]
	This is a classical lemma. We give a proof here. Let $C_\Gamma$ be the diameter of the quotient $C(\Gamma)$. For any $\gamma$ in $\Gamma$, we will find a sequence $\beta_j$, $0\leq j\leq k$ in $\Gamma_{C_0}$ such that $\gamma=\beta_0\cdots \beta_k$, which finishes the proof.
	
	In the geodesic $o(\gamma o)$, let $p_j$ be the point with distance $jC_\Gamma$ to $o$. Suppose that $kC_\Gamma\leq d(o,\gamma o)<(k+1)C_\Gamma$ and let $p_{k+1}=\gamma o$. Since $o(\gamma o)$ is in the convex hull, for every $p_j$ with $1\leq j\leq k$, by the definition of $C_\Gamma$, we can find $\gamma_j$ in $\Gamma$ such that $d(\gamma_jo,p_j)\leq C_\Gamma$. Let $\gamma_0=e$ and $\gamma_{k+1}=\gamma$. Hence for $0\leq j\leq k$
	$$d(\gamma_j o,\gamma_{j+1}o)\leq d(\gamma_jo,p_j)+d(p_j,p_{j+1})+d(p_{j+1},\gamma_{j+1}o)\leq 3C_\Gamma.$$
	Let $\beta_j=\gamma_{j}^{-1}\gamma_{j+1}$. Then $d(\beta_jo,o)=d(\gamma_{j}^{-1}\gamma_{j+1}o,o)=d(\gamma_{j+1} o,\gamma_{j} o)\leq 3C_\Gamma\leq C_0$, which implies $\beta_j\in \Gamma_{C_0}$. Therefore $\gamma=\beta_0\cdots \beta_k$.
\end{proof}

 \noindent {\it Jialun Li\\
IMB, Universit\'e de Bordeaux, 
351 cours de la Lib\'eration, 33405 Talence, France \\
email: {\tt jialun.li@math.u-bordeaux.fr}   
  
\noindent {\it Fr\'ed\'eric Naud\\
LMA, Avignon Universit\'e, 301 rue Baruch de Spinoza, 84916 Avignon Cedex, France\\
email: {\tt frederic.naud@univ-avignon.fr}   
   
\noindent {\it Wenyu Pan\\
Penn State University, State College, PA 16802, USA\\
email: {\tt wup60@psu.edu}

\end{document}